\DeclareSymbolFont{cyrillic}{T2A}{cmr}{m}{n}
\DeclareMathSymbol{\Sha}{\mathalpha}{cyrillic}{216}
\DeclareFontFamily{U}{mathb}{\hyphenchar\font45}
\DeclareFontShape{U}{mathb}{m}{n}{
<-6> mathb5 <6-7> mathb6 <7-8> mathb7
<8-9> mathb8 <9-10> mathb9
<10-12> mathb10 <12-> mathb12
}{}
\DeclareFontFamily{U}{BOONDOX-calo}{\skewchar\font=45 }
\DeclareFontShape{U}{BOONDOX-calo}{m}{n}{
  <-> s*[1.05] BOONDOX-r-calo}{}
\DeclareFontShape{U}{BOONDOX-calo}{b}{n}{
  <-> s*[1.05] BOONDOX-b-calo}{}
\DeclareMathAlphabet{\mathcalb}{U}{BOONDOX-calo}{m}{n}
\SetMathAlphabet{\mathcalb}{bold}{U}{BOONDOX-calo}{b}{n}
\newcommand{\ltwocell}[3][0.5]{\ar@{}[#2] \ar@{=>}?(#1)+/r 0.175cm/;?(#1)+/l 0.175cm/^{#3}}
\newcommand{\ltwocello}[3][0.5]{\ar@{}[#2] \ar@{=>}?(#1)+/r 0.175cm/;?(#1)+/l 0.175cm/_{#3}}
\newcommand{\dtwocell}[3][0.5]{\ar@{}[#2] \ar@{=>}?(#1)+/u  0.175cm/;?(#1)+/d 0.175cm/^{#3}}
\newcounter{Assume}
\newcommand{\twocong}[2][0.5]{\ar@{}[#2] \save ?(#1)*{\cong}\restore}
\newcommand{\twoeq}[2][0.5]{\ar@{}[#2] \save ?(#1)*{=}\restore}
\newcommand{\threeeq}[2][0.5]{\ar@{}[#2] \save ?(#1)*{\equiv}\restore}
\newcommand{\horaru}[3][0.5]{\ar@{->}[#3]^(#1){#2}\ar@{}[#3]|(#1){\scriptstyle{\bullet}} }
\newcommand{\horard}[3][0.5]{\ar@{->}[#3]_(#1){#2}\ar@{}[#3]|(#1){\scriptstyle{\bullet}} }
\newcommand{\horarm}[3][0.5]{\ar@{->}[#3]|(#1){#2}\ar@{}[#3]|(0.3){\scriptstyle{\bullet}} }
\newcommand{\circaruh}[2]{\ar@{->}[#2]^{#1}\ar@{}[#2]|-{\scriptstyle{\ominus}} }
\newcommand{\circardh}[2]{\ar@{->}[#2]_{#1}\ar@{}[#2]|-{\scriptstyle{\ominus}} }
\newcommand{\circarmh}[2]{\ar@{->}[#2]|-{#1}\ar@{}[#2]|-{\scriptstyle{\ominus}} }
\newcommand{\horto}[2]{\SelectTips{eu}{10}
\xymatrix@C=.2in{#1\horaru{}{r} & #2}}
\newcommand{\circto}[2]{\SelectTips{eu}{10}
\xymatrix@C=.2in{#1\circaruh{}{r} & #2}}
\DeclareMathSymbol{\b@r}{\mathord}{largesymbols}{"0E}
\newcommand{\newc@ncel}[2]{%
  \ooalign{%
    \hfil$\vcenter{\moved@wn{#1}\hbox{\scalebox{1}[0.5]{$#1\b@r$}}}$\hfil\cr % the bar
    $#1#2$\cr % the symbol
  }%
}
\newcommand{\moved@wn}[1]{%
  \sbox\z@{$#1\mkern-3mu\nonscript\mkern3mu$}%
  \vskip\wd\z@
}
\newcommand{\nprecnapprox}{%
  \mathrel{\m@th\mathpalette\newc@ncel\precapprox}%
}
\newcommand{\nsuccnapprox}{%
  \mathrel{\m@th\mathpalette\newc@ncel\succapprox}%
}
\newcommand{\periodafter}[1]{\ifstrempty{#1}{}{#1.}}
\titleformat{\section}[block]{\scshape\filcenter\LARGE\boldmath}{\thesection.}{.5em}{}
\titleformat{\subsection}[block]{\bfseries\filcenter\large\boldmath}{\thesubsection.}{.5em}{\medskip}
\titleformat{\subsubsection}[runin]{\bfseries\boldmath}{\thesubsubsection.}{.5em}{\periodafter}%{}[.]
\titlespacing{\subsubsection}{0pt}{\topsep}{.5em}
\newtheoremstyle{ntheorem}%
	{\topsep}{\topsep}{\itshape}{0pt}{\bfseries}{.}{.5em}%
	{\thmnumber{#2.\hspace{.5em}}\thmname{#1}\thmnote{ (#3)}}
\newtheoremstyle{ndefinition}%
	{\topsep}{\topsep}{\normalfont}{0pt}{\bfseries}{.}{.5em}%
	{\thmnumber{#2.\hspace{.5em}}\thmname{#1}\thmnote{ (#3)}}
\newtheoremstyle{nremark}%
	{\topsep}{\topsep}{\normalfont}{0pt}{\itshape}{.}{.5em}%
	{\thmnumber{}\thmname{#1}\thmnote{ (#3)}}
\newtheoremstyle{nfigure}%
	{\topsep}{\topsep}{\itshape}{0pt}{\bfseries}{.}{.5em}%
	{\thmnumber{#2.\hspace{.5em}}\thmname{#1}.\thmnote{ #3}}
\theoremstyle{ntheorem}
  	\newtheorem{theorem}[subsubsection]{Theorem}
  	\newtheorem{proposition}[subsubsection]{Proposition}
	\newtheorem{lemma}[subsubsection]{Lemma}
  	\newtheorem{corollary}[subsubsection]{Corollary}
\theoremstyle{nfigure}
\theoremstyle{ndefinition}
	\newtheorem{example}[subsubsection]{Example}
	\newtheorem{remark}[subsubsection]{Remark}
\def\@equationname{equation}
\newenvironment{eqn}[1]{%
    \def\mymathenvironmenttouse{#1}%
    \ifx\mymathenvironmenttouse\@equationname%
        \refstepcounter{subsubsection}%
    \else
        \patchcmd{\@arrayparboxrestore}{equation}{subsubsection}{}{}%          doesn't change output?
        \patchcmd{\print@eqnum}{equation}{subsubsection}{}{}%
        \patchcmd{\incr@eqnum}{equation}{subsubsection}{}{}%
    \fi
    \csname\mymathenvironmenttouse\endcsname%
}{%
    \ifx\mymathenvironmenttouse\@equationname%
        \tag{\thesubsubsection}%
    \fi
    \csname end\mymathenvironmenttouse\endcsname%
}
\newenvironment{eq*}{%
 \incr@eqnum
  $
%  \mathdisplay{equation*}%
}{%
%  \endmathdisplay{equation*}%
  $
  \ignorespacesafterend
}
\newcounter{DefEq}
	\edef\Drop@@{%
		\dimen@=#1\relax
		\dimen@=.5\dimen@
		\A@=-\sinDirection\dimen@
		\B@=\cosDirection\dimen@
		\setboxz@h{%
			\setbox2=\hbox{\kern3\A@\raise3\B@\copy\z@}%
			\dp2=\z@ \ht2=\z@ \wd2=\z@ \box2
			\setbox2=\hbox{\kern\A@\raise\B@\copy\z@}%
			\dp2=\z@ \ht2=\z@ \wd2=\z@ \box2
			\setbox2=\hbox{\kern-\A@\raise-\B@\copy\z@}%
			\dp2=\z@ \ht2=\z@ \wd2=\z@ \box2
			\setbox2=\hbox{\kern-3\A@\raise-3\B@ \noexpand\boxz@}%
			\dp2=\z@ \ht2=\z@ \wd2=\z@ \box2
		}%
		\ht\z@=\z@ \dp\z@=\z@ \wd\z@=\z@ \noexpand\styledboxz@
	}%
\xydef@\Tttip@{\kern2pt \vrule height2pt depth2pt width\z@
	\Tttip@@ \kern2pt \egroup
	\U@c=0pt \D@c=0pt \L@c=0pt \R@c=0pt \Edge@c={\circleEdge}%
	\def\Leftness@{.5}\def\Upness@{.5}%
	\def\Drop@@{\styledboxz@}\def\Connect@@{\straight@{\dottedSpread@\jot}}}
\xydef@\Tttip@@{%
	\dimen@=.25\dimen@
%	\A@=-\sinDirection\dimen@
 	\B@=\cosDirection\dimen@
	\setboxz@h\bgroup\reverseDirection@\line@ \wdz@=\z@ \ht\z@=\z@ \dp\z@=\z@
%	\kern\A@ \raise\B@ \boxz@ \kern\L@c
%	\kern-\L@c \boxz@ \kern\L@c
	{\vDirection@(1,-1)\xydashl@ \xyatipfont\char\DirectionChar}%
	{\vDirection@(1,+1)\xydashl@ \xybtipfont\char\DirectionChar}%
}
\xydef@\ar@form{
	\ifx \space@\next \expandafter\DN@\space{\xyFN@\ar@form}%
	\else\ifx ^\next \DN@ ^{\xyFN@\ar@style}\edef\arvariant@@{\string^}%
	\else\ifx _\next \DN@ _{\xyFN@\ar@style}\edef\arvariant@@{\string_}%
	\else\ifx 0\next \DN@ 0{\xyFN@\ar@style}\def\arvariant@@{0}%
	\else\ifx 1\next \DN@ 1{\xyFN@\ar@style}\def\arvariant@@{1}%
	\else\ifx 2\next \DN@ 2{\xyFN@\ar@style}\def\arvariant@@{2}%
	\else\ifx 3\next \DN@ 3{\xyFN@\ar@style}\def\arvariant@@{3}%
	\else\ifx 4\next \DN@ 4{\xyFN@\ar@style}\def\arvariant@@{4}%
	\else\ifx \bgroup\next \let\next@=\ar@style
	\else\ifx [\next \DN@[##1]{\ar@modifiers{[##1]}}%]
	\else\ifx *\next \DN@ *{\ar@modifiers}%
	\else\addLT@\ifx\next \let\next@=\ar@slide
	\else\ifx /\next \let\next@=\ar@curveslash
	\else\ifx (\next \let\next@=\ar@curveinout %)
	\else\addRQ@\ifx\next \addRQ@\DN@{\ar@curve@}%
	\else\addLQ@\ifx\next \addLQ@\DN@{\xyFN@\ar@curve}%
	\else\addDASH@\ifx\next \addDASH@\DN@{\defarstem@-\xyFN@\ar@}%
	\else\addEQ@\ifx\next \addEQ@\DN@{\def\arvariant@@{2}\defarstem@-\xyFN@\ar@}%
	\else\addDOT@\ifx\next \addDOT@\DN@{\defarstem@.\xyFN@\ar@}%
	\else\ifx :\next \DN@:{\def\arvariant@@{2}\defarstem@.\xyFN@\ar@}%
	\else\ifx ~\next \DN@~{\defarstem@~\xyFN@\ar@}%
	\else\ifx !\next \DN@!{\dasharstem@\xyFN@\ar@}%
	\else\ifx ?\next \DN@?{\ar@upsidedown\xyFN@\ar@}%
	\else \let\next@=\ar@error
	\fi\fi\fi\fi\fi\fi\fi\fi\fi\fi\fi\fi\fi\fi\fi\fi\fi\fi\fi\fi\fi\fi\fi \next@}
\newcommand{\fl}{\rightarrow}
\newcommand{\fll}{\longrightarrow}
\newcommand{\ofl}[1]{\overset{\displaystyle #1}{\fll}}
\newcommand{\pfl}{\twoheadrightarrow}
\newcommand{\dfl}{\Rightarrow}
\newcommand{\tfl}{\Rrightarrow}
\newcommand{\qfl}{\xymatrix@1@C=10pt{\ar@4 [r] &}}
\newcommand{\equivmap}{\xymatrix@1@C=10pt{\ar @{|-|} [r] &}}
\newcommand{\cl}[1]{\overline{#1}}
\newcommand{\rep}[1]{\widehat{#1}}
\DeclareMathOperator{\id}{Id}
\renewcommand{\epsilon}{\varepsilon}
\newcommand{\Ar}{\mathbb{A}}
\newcommand{\Cr}{\mathcal{C}}
\newcommand{\Tr}{\mathcal{T}}
\def\catego#1{\mathsf{#1}}
\newcommand{\Cat}{\catego{Cat}}
\newcommand{\Pol}{\catego{Pol}}
\newcommand{\Set}{\operatorname{Set}}
\newcommand{\Vect}{\operatorname{Vect}}
\newcommand{\Alg}{\catego{Alg}}
\newcommand{\OSet}{\operatorname{\Omega\text{-}\catego{Set}}}
\newcommand{\OVect}{\operatorname{\Omega\text{-}\catego{Vect}}}
\newcommand{\OAlg}[1]{\Omega\text{-}\catego{Alg}_{#1}}
\newcommand{\Glob}{\catego{Glob}}
\newcommand{\Bimod}{\operatorname{Bimod}}
\newcommand{\Mon}{\operatorname{Mon}}
\newcommand{\OMon}{\operatorname{\Omega\text{-}Mon}}
\newcommand{\nc}{\newcommand}
\newcommand{\RBA}[2]{\mathcal{R}\mathcal{B}_{#1}(#2)}
\newcommand{\DRBA}[2]{\mathcal{D}\mathcal{R}\mathcal{B}_{#1}(#2)}
\newcommand{\DA}[2]{\mathcal{D}_{#1}(#2)}
\DeclareMathOperator{\Nf}{Nf}
\DeclareMathOperator{\supp}{supp}
\newcommand{\lin}[1]{\mathscr{A}_\Omega(#1)}
\newtheorem{prop-def}{Proposition-Definition}[section]
\theoremstyle{definition}
\newcommand{\tfll}{\xymatrix@1@C=20pt{\ar@3 [r] &}}
\newcommand{\otfl}[1]{\xymatrix@1@C=20pt{\ar@3 [r] ^-*+{#1} &}}
\renewcommand{\theenumi}{\roman{enumi}}
\renewcommand{\theenumii}{\alph{enumii}}
\nc{\delete}[1]{{}}
\nc{\mmargin}[1]{}
\nc{\mlabel}[1]{\label{#1}}  % Use this to suppress names
\nc{\mcite}[1]{\cite{#1}}  % Use this to suppress names
\nc{\mref}[1]{\ref{#1}}  % Use this to suppress names
\nc{\mbibitem}[1]{\bibitem{#1}} % Use this to show number
	\nc{\mlabel}[1]{\label{#1}  % Use the next two lines to show names
		{\hfill \hspace{1cm}{\bf{{\ }\hfill(#1)}}}}
	\nc{\mcite}[1]{\cite{#1}{{\bf{{\ }(#1)}}}}  % Use this lines to show names
	\nc{\mref}[1]{\ref{#1}{{\bf{{\ }(#1)}}}}  % Use this lines to show names
	\nc{\mbibitem}[1]{\bibitem[\bf #1]{#1}} % Use this to show name
 \font\cyrs=wncyr7
\newcommand{\bk}{{\mathbf{k}}}
\nc{\vep}{\varepsilon}
\nc{\bin}[2]{ (_{\stackrel{\scs{#1}}{\scs{#2}}})}  %binomial coeff
\nc{\binc}[2]{(\!\! \begin{array}{c} \scs{#1}\\
		\scs{#2} \end{array}\!\!)}  %binomial coeff
\nc{\bincc}[2]{  ( {\scs{#1} \atop
		\vspace{-1cm}\scs{#2}} )}  %binomial coeff
\nc{\oline}[1]{\overline{#1}}
\nc{\mapm}[1]{\lfloor\!|{#1}|\!\rfloor}
\nc{\bs}{\bar{S}}
\nc{\la}{\longrightarrow}
\nc{\rar}{\rightarrow}
\nc{\lon }{\,\rightarrow\,}
\nc{\dar}{\downarrow}
\nc{\dap}[1]{\downarrow \rlap{$\scriptstyle{#1}$}}
\nc{\defeq}{\stackrel{\rm def}{=}}
\nc{\dis}[1]{\displaystyle{#1}}
\nc{\dotcup}{\ \displaystyle{\bigcup^\bullet}\ }
\nc{\hcm}{\ \hat{,}\ }
\nc{\hts}{\hat{\otimes}}
\nc{\hcirc}{\hat{\circ}}
\nc{\lleft}{[}
\nc{\lright}{]}
\nc{\curlyl}{\left \{ \begin{array}{c} {} \\ {} \end{array}
	\right .  \!\!\!\!\!\!\!}
\nc{\curlyr}{ \!\!\!\!\!\!\!
	\left . \begin{array}{c} {} \\ {} \end{array}
	\right \} }
\nc{\longmid}{\left | \begin{array}{c} {} \\ {} \end{array}
	\right . \!\!\!\!\!\!\!}
\nc{\ora}[1]{\stackrel{#1}{\rar}}
\nc{\ola}[1]{\stackrel{#1}{\la}}%${\Bbb Z}$
\nc{\scs}[1]{\scriptstyle{#1}} \nc{\mrm}[1]{{\rm #1}}
\nc{\dirlim}{\displaystyle{\lim_{\longrightarrow}}\,}
\nc{\invlim}{\displaystyle{\lim_{\longleftarrow}}\,}
\nc{\dislim}[1]{\displaystyle{\lim_{#1}}} \nc{\colim}{\mrm{colim}}
\nc{\mvp}{\vspace{0.3cm}} \nc{\tk}{^{(k)}} \nc{\tp}{^\prime}
\nc{\ttp}{^{\prime\prime}} \nc{\svp}{\vspace{2cm}}
\nc{\vp}{\vspace{8cm}}
\nc{\modg}[1]{\!<\!\!{#1}\!\!>}
\nc{\intg}[1]{F_C(#1)}
\nc{\lmodg}{\!<\!\!}
\nc{\rmodg}{\!\!>\!}
\nc{\cpi}{\widehat{\Pi}}
\nc{\ssha}{{\mbox{\cyrs X}}} %sha as product
\nc{\tsha}{{\mbox{\cyrt X}}}
\nc{\shpr}{\diamond}    %Shuffle product
\nc{\labs}{\mid\!}
\nc{\rabs}{\!\mid}
 \nc{\zhx}{\text{-}}
\nc{\ad}{\mrm{ad}}
\nc{\ann}{\mrm{ann}}
\nc{\Av}{\mrm{Av}}
\nc{\bim}{\mbox{-}\mathsf{Bimod}}
\nc{\br}{\mrm{bre}}
\nc{\can}{\mrm{can}}
\nc{\Cont}{\mrm{Cont}}
\nc{\rchar}{\mrm{char}}
\nc{\cok}{\mrm{coker}}
\nc{\db}{\mrm{db}}
\nc{\de}{\mrm{dep}}
\nc{\dgg}{\mrm{dgg}}
\nc{\dgp}{\mrm{dgp}}
\nc{\dgx}{\mrm{dgx}}
\nc{\Dif}{\mrm{Diff}}
\nc{\dtf}{{R-{\rm tf}}}
\nc{\dtor}{{R-{\rm tor}}}
\nc{\Div}{{\mrm Div}}
\nc{\Diff}{\mrm{DA}}
\nc{\Diffl}{\mathsf{DA}_\lambda}
\nc{\diffo}{{\mathsf{DO}_\lambda}}
\nc{\dl}{{\mathrm{PD}}}
\nc{\dRB}{{\mathrm{\Phi}_\mathsf{DRB}}}
\nc{\udRB}{{\mathrm{\Phi}_\mathsf{uDRB}}}
\nc{\OdRB}{{\mathrm{\Phi}_\mathsf{DRB}^0}}
 \nc{\OudRB}{{\mathrm{\Phi}_\mathsf{uDRB}^0}}
\nc{\inte}{{\mathrm{\Phi}_\mathsf{ID}}}
\nc{\uinte}{{\mathrm{\Phi}_\mathsf{uID}}}
\nc{\Ointe}{{\mathrm{\Phi}_\mathsf{ID}^0}}
 \nc{\Ouinte}{{\mathrm{\Phi}_\mathsf{uID}^0}}
\nc{\alg}{\mathsf{Alg}}
\nc{\Fil}{\mrm{Fil}}
\nc{\Frob}{\mrm{Frob}}
\nc{\Gal}{\mrm{Gal}}
\nc{\GL}{\mrm{GL}}
\nc{\Hoch}{\mrm{Hoch}}
\nc{\hsr}{\mrm{H}}
\nc{\hpol}{\mrm{HP}}
\nc{\im}{\mrm{im}}
\nc{\Id}{\mrm{Id}}
\nc{\ID}{\mrm{ID}}
\nc{\Irr}{\mrm{Irr}}
\nc{\incl}{\mrm{incl}}
\nc{\length}{\mrm{length}}
\nc{\NLSW}{\mrm{NLSW}}
\nc{\Nij}{\mrm{Nij}}
\nc{\mchar}{\rm char}
\nc{\mpart}{\mrm{part}}
\nc{\ql}{{\QQ_\ell}}
\nc{\qp}{{\QQ_p}}
\nc{\rank}{\mrm{rank}}
\nc{\rcot}{\mrm{cot}}
\nc{\rdef}{\mrm{def}}
\nc{\rdiv}{{\rm div}}
\nc{\Rey}{\mrm{Rey}}
\nc{\rtf}{{\rm tf}}
\nc{\rtor}{{\rm tor}}
\nc{\res}{\mrm{res}}
\nc{\SL}{\mrm{SL}}
\nc{\Spec}{\mrm{Spec}}
\nc{\tor}{\mrm{tor}}
\nc{\tr}{\mrm{tr}}
\nc{\wt}{\mrm{wt}}
\nc{\udl}{{\mathrm{udl}}}
\nc{\bfk}{{\bf k}}
\nc{\bfone}{{\bf 1}}
\nc{\bfzero}{{\bf 0}}
\nc{\detail}{\marginpar{\bf More detail}
	\noindent{\bf Need more detail!}
	\svp}
\nc{\gap}{\marginpar{\bf Incomplete}\noindent{\bf Incomplete!!}
	\svp}
\nc{\FMod}{\mathbf{FMod}}
\nc{\Int}{\mathbf{Int}}
\nc{\remarks}{\noindent{\bf Remarks: }}
\nc{\BA}{{\mathbb A}}   \nc{\CC}{{\mathbb C}}
\nc{\DD}{{\mathbb D}}   \nc{\EE}{{\mathbb E}}
\nc{\FF}{{\mathbb F}}  
\nc{\HH}{{\mathbb H}}   \nc{\LL}{{\mathbb L}}
\nc{\NN}{{\mathbb N}}   \nc{\PP}{{\mathbb P}}
\nc{\QQ}{{\mathbb Q}}   \nc{\RR}{{\mathbb R}}
\nc{\TT}{{\mathbb T}}   \nc{\VV}{{\mathbb V}}
\nc{\ZZ}{{\mathbb Z}}   \nc{\TP}{\widetilde{P}}
\nc{\cala}{{\mathcal A}}    \nc{\calc}{{\mathcal C}}
\nc{\cald}{\mathcal{D}}     \nc{\cale}{{\mathcal E}}
\nc{\calf}{{\mathcal F}}    \nc{\calg}{{\mathcal G}}
\nc{\calh}{{\mathcal H}}    \nc{\cali}{{\mathcal I}}
\nc{\call}{{\mathcal L}}    \nc{\calm}{{\mathcal M}}
\nc{\caln}{{\mathcal N}}    \nc{\calo}{{\mathcal O}}
\nc{\calp}{{\mathcal P}}    \nc{\calr}{{\mathcal R}}
\nc{\cals}{{\mathcal S}}    \nc{\calt}{{\Omega}}
\nc{\calv}{{\mathcal V}}    \nc{\calw}{{\mathcal W}}
\nc{\calx}{{\mathcal X}}    \nc{\calu}{{\mathcal U}}
\nc{\caly}{{\mathcal Y}}
\nc{\uOpAlg}{{\mathfrak{uOpAlg}}}
\nc{\OpAlg}{{\mathfrak{OpAlg}}}
\nc{\ComOpAlg}{{\mathfrak{ComOpAlg}}}
\nc{\OpVect}{{\mathfrak{OpVect}}}
\nc{\OpSet}{{\mathfrak{OpSet}}}
\nc{\OpMon}{{\mathfrak{OpMon}}}
\nc{\ComOpMon}{{\mathfrak{ComOpMon}}}
\nc{\OpSem}{{\mathfrak{OpSem}}}
\nc{\ComOpSem}{{\mathfrak{ComOpSem}}}
\nc{\uAlg}{{\mathfrak{uAlg}}}
\nc{\ComAlg}{{\mathfrak{ComAlg}}}
\nc{\ComMon}{{\mathfrak{ComMon}}}
\nc{\mtOpSet}{{\Omega\zhx\mathfrak{Set}}}
\nc{\mtOpSem}{{\Omega\zhx\mathfrak{Sem}}}
\nc{\mtOpMon}{{\Omega\zhx\mathfrak{Mon}}}
\nc{\mtOpVect}{{\Omega\zhx\mathfrak{Vect}}}
\nc{\mtOpAlg}{{\Omega\zhx\mathfrak{Alg}}}
\nc{\mtuOpAlg}{{\Omega\zhx\mathfrak{uAlg}}}
\nc{\ComSem}{\mathfrak{ComSem}}
\nc{\fraka}{{\mathfrak a}}
\nc{\frakb}{\mathfrak{b}}
\nc{\frakg}{{\frak g}}
\nc{\frakl}{{\frak l}}
\nc{\fraks}{{\frak s}}
\nc{\frakB}{{\frak B}}
\nc{\frakm}{{\frak m}}
\nc{\frakM}{{\frak M}}
\nc{\frakp}{{\frak p}}
\nc{\frakW}{{\frak W}}
\nc{\frakX}{{\frak X}}
\nc{\frakS}{{\frak{S}}}
\nc{\frakA}{{\frak A}}
\nc{\frakx}{{\frakx}}
\nc{\frakMstar}{{\mathfrak{M}_\Omega^\star}}
\nc{\frakSstar}{{\mathfrak{S}_\Omega^\star}}
\nc{\lir}[1]{\textcolor{red}{\underline{Li:}#1 }}
\newcommand{\brca}[1]{\left\lfloor #1 \right\rfloor}%%operaors
\newcommand{\brck}[2]{\left\lfloor #1 \right\rfloor_{#2}}%%operaors
\newcommand{\LO}{$\Omega$}
\DeclareMathOperator{\lm}{lm}
\renewcommand{\theenumi}{\roman{enumi}}%(
\DeclareRobustCommand
\newcommand{\sm}{\scriptstyle}
\newcommand{\pullbackcorner}[1][dr]{\save*!/#1-1.6pc/#1:(-1,1)@^{|-}\restore}
\def\hhmm{\number\hh:\ifnum\mm<10{}0\fi\number\mm}
\definecolor{vert}{rgb}{0,0.45,0}
\definecolor{rouge}{rgb}{0.89,0.04,0.36}
\definecolor{oorange}{RGB}{148,76,5}
\definecolor{MyGray}{gray}{0.6}
\definecolor{MyRed}{RGB}{212,42,42}
\newcommand{\auteur}[3]{
\noindent
\begin{minipage}[t]{.45\textwidth}
\begin{flushright}
\textsc{#1} \\
{\footnotesize\textsf{#2}}
\end{flushright} 
\end{minipage}
\qquad
\begin{minipage}[t]{.45\textwidth}
#3
\end{minipage}
}
\begin{document}
\thispagestyle{empty}

\begin{center}

% Title
\begin{doublespace}
\begin{huge}
{\scshape Polygraphic resolutions for operated algebras}
\end{huge}

\vskip+2pt

\bigskip
\hrule height 1.5pt 
\bigskip

\vskip+5pt

% Auteurs
\begin{Large}
{\scshape Zuan Liu - Philippe Malbos}
\end{Large}
\end{doublespace}

%%%
%\vfill

%%%
\vskip+25pt

\begin{small}\begin{minipage}{14cm}
\noindent\textbf{Abstract --}
This paper introduces the structure of operated polygraphs as a categorical model for rewriting in operated algebras, generalizing Gröbner-Shirshov bases with non-monomial termination orders. We provide a combinatorial description of critical branchings of operated polygraphs using the structure of polyautomata that we introduce in this paper. Polyautomata extend linear polygraphs equipped with an operator structure formalized by a pushdown automaton. We show how to construct polygraphic resolutions of free operated algebras from their confluent and terminating presentations. Finally, we apply our constructions to several families of operated algebras, including Rota-Baxter algebras, differential algebras, and differential Rota-Baxter algebras.
\medskip

\smallskip\noindent\textbf{Keywords --} 
Operated algebras, higher-dimensional rewriting,  Gröbner-Shirshov bases, automata, polygraphic resolutions.

\smallskip\noindent\textbf{M.S.C. 2020 --} 
12H05, 16Z10, 18N30, 68Q42, 18G10.
\end{minipage}
\end{small}
\end{center}

\begin{center}
%%% Table des matieres
\begin{small}\begin{minipage}{12cm}
\renewcommand{\contentsname}{}
\setcounter{tocdepth}{1}
\tableofcontents
\end{minipage}
\end{small}
\end{center}

\section{Introduction}
An \emph{operated algebra}, also called \emph{$\Omega$-algebra}, is an associative algebra with linear operators. 
For example, \emph{differential algebras}, introduced by Ritt~\cite{Ritt1950,Ritt1932} in the theory of differential equations, are operated algebras, with an operator satisfying the Leibniz rule.
Differential algebras have been developed in many areas, such as differential Galois theory~\cite{Magid1994,MariusMichael} and differential algebraic group theory~\cite{Kolchin1973}.
\emph{Rota-Baxter algebras} form another important class of operated algebras, introduced by Baxter \cite{Baxter1960}, see also~\cite{Rota1969, Cartier1972, Atkinson1963}. 
They generalize the algebra of continuous functions through integral operators. 
These algebras have found applications across various fields of mathematics and physics, including renormalization in quantum field theory~\cite{Guo10}, the analysis of Volterra integral equations~\cite{GGL22}, Hopf algebras~\cite{ConnesKreimer2000}, Yang-Baxter equations~\cite{Bai2007}, and Rota-Baxter Lie algebras~\cite{GLS21}.
\emph{Differential Rota-Baxter algebras} are equipped with both differential and integral operators. 
Introduced by Guo and Keigher~\cite{GKeigher08}, they describe the relationship between such operators satisfying the first fundamental theorem of calculus. 
The homological study of operated algebras has been the subject of several works.
In particular, homological properties such as Koszul duality, minimal model, deformations, and cohomology theory have been explored in \cite{SongWangZhang,WangZhou24,ChenGuoWangZhou} for Rota-Baxter and differential algebras.
Additionally, these algebras have also been studied from a computational approach, using Gröbner-Shirshov bases theory \cite{BokutChenQiu,GuoSitZhang,LQQZ,GaoGuoSit,LiGuo}.

\bigskip

In this line of work, this paper introduces a new rewriting method to compute resolutions of operated algebras. 
Rewriting is a computational model widely used in algebra, logic and computer science, which consists of calculating in an equational theory by generating the set of equalities by means of a system of \emph{rewriting rules}, and sequentially applying the rules that replace the subterms of a formula by other terms.
In algebra, rewriting theory enables the calculation of linear bases~\cite{Buchberger65,Ufnarovski95} and resolutions of algebraic structures in abelian and categorical settings~\cite{Anick86,Kobayashi90,Kobayashi05,Groves90,Brown92,GuiraudMalbos12advances}. For operated algebras, \emph{Gröbner–Shirshov bases}, GS bases for short, introduced in \cite{Shirshov,Buchberger06},  are frequently used alongside rewriting methods for the standardization of ideal representations in free operated algebras, yielding linear bases and other related constructions \cite{BokutChenQiu,GGR15,GuoSitZhang,QQWZ21}. GS methods require a \emph{monomial order} compatible with the rewriting rules, ensuring the termination of calculations. Nevertheless, finding a monomial order for an operated algebra is more complicated  than in the associative case due to the nesting of operators in monomials~\cite{GGR15,LQQZ,BokutChenQiu}. 

The aim of this paper is twofold: first, it introduces a new termination method in operated algebras without relying on monomial orders, and second, it introduces the notion of \emph{polygraphic resolutions} for operated algebras. 
The linear rewriting strategies introduced in \cite{GuiraudHoffbeckMalbos19} allow one to compute resolutions of an associative algebra by extending its rewriting presentation into a polygraphic resolution generated by confluence diagrams of the higher overlapping of rules.
We extend this approach to operated algebras by introducing the structure of \emph{$\Omega$-polygraphs} as a rewriting setting for operated algebras. It can be used to derive their linear bases and resolutions.
This construction is part of a line of research that consists in constructing polygraphic resolutions of algebraic structures from convergent presentations of them, e.g. for monoids and categories \cite{GuiraudMalbos18,GuiraudMalbos12advances}, associative algebras \cite{GuiraudHoffbeckMalbos19} and operads \cite{MalbosRen23}.

\bigskip

Now we present  the main results of this paper. 
The first part consists of setting up the polygraphic framework for $\Omega$-algebras.
In Section~\ref{S:HigherOperatedAlgebras}, we  introduce the structure of \emph{higher $\Omega$-algebras}, thereafter called \emph{$\omega$-algebras}, as internal $\omega$-categories in the category of $\Omega$-algebras.
Using the underlying linear and operator structures, Proposition~\ref{P:IsomorphismOmegaAlgebrasBimodules} characterizes $\omega$-algebras in terms of globular bimodules over $\Omega$-algebras.
Following this characterization, Subsection~\ref{SS:Polygraphs for operated algebras} introduces the notion of \emph{$\Omega$-polygraphs} as systems of generators and relations for presentations of $\omega$-algebras. This construction is an extension of the structure of linear polygraphs introduced for associative algebras in \cite{GuiraudHoffbeckMalbos19}.

In Section~\ref{S:OperatedRewriting}, we expand the structure of $\Omega$-$1$-polygraphs to rewriting systems for $\Omega$-algebras. 
We define the rewriting properties such as \emph{termination} and \emph{confluence} on $\Omega$-$1$-polygraphs. We show that this polygraphic approach allows us to define more general termination orders than those used in GS bases theory, see Remark~\ref{E:NoMonomialOrder}.
Proposition~\ref{P:DerivationTerminating} proves termination of an $\Omega$-$1$-polygraph using the method of \emph{derivations} introduced in \cite{Guiraud06jpaa,GuiraudMalbos09}. Proposition~\ref{P:LinearBasisConvergentPolygraphs} states that for \emph{convergent} $\Omega$-$1$-polygraphs, which are both confluent and terminating, the set of \emph{normal forms} is a linear basis for the presented algebra. 
Finally, we classify the \emph{local branching} of $\Omega$-$1$-polygraphs, and state the coherent critical branching lemma in Theorem~\ref{T:CriticalBranchingLemma}. In Subsection~\ref{SS:GSConvergence}, we make explicit the relationship between GS bases of $\Omega$-algebras and convergent \LO-1-polygraphs. 

Due to possible nesting of operators, the critical branchings of the \LO-$1$-polygraph cannot be described in terms of string overlaps, see Remark~\ref{R:DifferencePolygraphAnd OmegaPolygraph}. In order to relate the structures of \LO-$1$-polygraphs and linear $1$-polygraphs, in Section~\ref{S:Polyautomata}, we describe the operator structures using \emph{pushdown automata (PDA)}. 
Explicitly,  we construct a PDA $\Ar_\Omega$ accepting all monomials of the $\Omega$-algebras defined with respect to an indexed set $\Omega$. Given a linear 1-polygraph $\Sigma$ and a PDA $\Ar$, we define a \emph{$1$-polyautomaton} as a pair $(\Sigma, \Ar)$, where both $s(\alpha)$ and $t(\alpha)$ are linear combinations of monomials accepted by the PDA $\Ar$, for every $\alpha \in \Sigma_1$. 
Theorem~\ref{T:MainResultSection4} proves an equivalence between the category of \LO-1-polygraphs and the category of linear 1-polygraphs. Using this correspondence, Theorem~\ref{T:OmegaCBshape} provides an interpretation of the critical branchings in \LO-$1$-polygraphs in terms of the critical branchings of linear $1$-polygraphs.

Section~\ref{S:PolygraphicResolutions} presents the main result of this paper. 
Starting with an $\Omega$-algebra presented by a reduced convergent left-monomial \LO-1-polygraph~$X$, Theorem~\ref{T:Main conclusion} constructs its polygraphic resolution \emph{à la Squier} $\mathrm{Sq}(X)$.
The $0$-generators and $1$-generators of the $\omega$-polygraph $\mathrm{Sq}(X)$ consist of the variables and the rewriting rules defining the algebra, respectively. For $n\geq 2$, the $n$-generators are the sources of the critical $n$-branchings of the polygraph~$X$, as defined in~\eqref{SSS:Higher-critical branching}.  An associative algebra being an $\Omega$-algebra with $\Omega$ empty, our polygraphic resolution generalizes the corresponding construction for associative algebras~\cite{GuiraudHoffbeckMalbos19}.
In Section~\ref{S:ExampleOfResolutionOfOmegaAlgebras}, we apply Theorem~\ref{T:Main conclusion} to construct polygraphic resolutions for Rota–Baxter algebras, differential algebras, and differential Rota–Baxter algebras. To this end, we construct reduced and convergent \LO-1-polygraphs for these $\Omega$-algebras by defining  derivations to prove termination and applying the critical branchings theorem to prove confluence.

\subsubsection*{Organization of the paper}
Section~\ref{S:HigherOperatedAlgebras} recalls the main constructions of $\Omega$-algebras used in this work. 
It also introduces the structure of higher $\Omega$-algebras and \LO-polygraphs.
Section~\ref{S:OperatedRewriting} deals with the rewriting properties of \LO-1-polygraphs, including the construction of derivations for termination proofs, the operated version of the coherent critical branchings lemma, and comparisons with Gröbner–Shirshov bases.
Section~\ref{S:Polyautomata} introduces the structure of polyautomata, encoding both the associative linear structure and the operator structure to establish the equivalence between the categories of \LO-$1$-polygraphs and linear $1$-polygraphs.
This correspondence allows us to establish an interpretation of the critical branchings of $\Omega$-$1$-polygraphs in terms of those of polygraphs of associative algebras.
Section~\ref{S:PolygraphicResolutions} introduces polygraphic resolutions of $\Omega$-algebras. 
We characterize the property of an $\Omega$-$\omega$-polygraph of being a resolution by the existence of a contraction defined on its generators. We then show how to construct such a contraction starting with a reduced convergent $\Omega$-$1$-polygraph.
Finally, Section~\ref{S:ExampleOfResolutionOfOmegaAlgebras} presents constructions of polygraphic resolutions for some classical $\Omega$-algebras.

\subsubsection*{Conventions and notations}
Throughout this paper, we fix a field $\bk$ of characteristic zero and an element $\lambda$ in $\bk$.
Unless stated otherwise, all algebras in this paper are assumed to be associative and unital.
All operators are indexed by a set $\Omega$ and the set of variables is denoted by~$Z$.

\section{Higher operated algebras and operated polygraphs}
\label{S:HigherOperatedAlgebras}
This section introduces the notion of higher $\Omega$-algebras and the structure of \LO-polygraphs as systems of generators and relations for  higher $\Omega$-algebras. 
Operated algebraic structures were defined in \cite{QQWZ21}. 
We first recall the notion of $\Omega$-object in a category and the construction of free $\Omega$-algebras from \cite{BokutChenQiu, Guo09}.

\subsection{Operated algebras}

\subsubsection{Operated objects in a category}
\label{SSS:OmegaObjects}
An \emph{(internal) $\Omega$-object} in a category $\Cr$ is  an object $X$ of  $\Cr$ equipped with a family of maps $\Tr_\tau: X\rightarrow X$, called \emph{operators}, indexed by $\tau\in \Omega$.
A \emph{morphism of $\Omega$-objects from $(X, \Tr_\tau)$ to $(X^\prime, \Tr^\prime_\tau)$} is a morphism $f: X \rightarrow X^\prime$ in $\Cr$ such that $f\circ \Tr_\tau=\Tr^\prime_\tau \circ f$, for every $\tau \in \Omega$. 
The $\Omega$-objects and their morphisms form a category, denoted by $\operatorname{\Omega\text{-}\Cr}$. 
When $\Cr$ is $\bk$-linear, we require the operators to be $\bk$-linear.

In this paper, we will consider $\OSet$, $\OMon$, $\OVect$, and $\OAlg{}{}$ as the categories of $\Omega$-objects in the categories $\Set$, $\Mon$, $\Vect$, and $\Alg{}$ of sets, monoids, vector spaces, and algebras, respectively.
When no confusion is possible, we will write $(X, \Tr)$ or $X$ for short.
Note that if $\Omega$ is empty,  the categories $\Cr$ and $\Omega\text{-}\Cr$ are isomorphic. 

\subsubsection{Free $\Omega$-algebras}
\label{SSS:FreeOmegaMonoid}
The free $\Omega$-monoid $Z^\Omega$ on a set $Z$ is constructed by induction as follows.
We set $Z^\Omega_0\coloneq Z^\ast$, the free monoid on $Z$.
Denote by $\lfloor Z \rfloor_\tau$ the set of formal elements $\lfloor z \rfloor_\tau$, where $\tau \in \Omega$ and $z \in Z$, and define
\[
\lfloor Z \rfloor_\Omega \coloneq \bigsqcup\limits_{\tau \in \Omega} \left\lfloor Z \right\rfloor_\tau.
\]
We set $Z^\Omega_1\coloneq (Z\sqcup   \lfloor Z^\Omega_0 \rfloor_\Omega)^\ast$. 
The inclusion $Z \hookrightarrow Z\sqcup \lfloor Z^\Omega_0 \rfloor_\Omega$ induces an injective morphism
\[
i_{0,1}: Z^\Omega_0 \hookrightarrow Z^\Omega_1.
\]
For $1\leq k \leq n-1$, suppose $Z^\Omega_{k}$ constructed with injective morphisms $i_{k-1,k}: Z^\Omega_{k-1} \hookrightarrow Z^\Omega_{k}$.
We then set
\[
Z^\Omega_{n} \coloneq (Z\sqcup \lfloor Z^\Omega_{n-1} \rfloor_\Omega)^\ast.
\]
The inclusion $Z\sqcup \lfloor Z^\Omega_{n-2} \rfloor_\Omega\hookrightarrow Z\sqcup \lfloor Z^\Omega_{n-1} \rfloor_\Omega$  also induces an injective morphism
\[
i_{n-1,n}: Z^\Omega_{n-1} \hookrightarrow Z^\Omega_n.
\]
By construction, we have $Z^\Omega_i\subset Z^\Omega_{i+1}$ for $i\geq 0$, and we define $Z^\Omega\coloneq\varinjlim Z^\Omega_{n}$. The maps sending $u\in Z^\Omega_{n}$ to $\lfloor u \rfloor_{\tau} \in Z^\Omega_{n+1}$ induce a family of operators $\lfloor~\rfloor_\tau$ on $Z^\Omega$ indexed by $\tau \in \Omega$. 
As a result, $(Z^\Omega, \brck{~}{\tau})$ is the \emph{free $\Omega$-monoid} on~$Z$. For $u\in Z^\Omega$, we define $\lfloor u\rfloor_0 \coloneq u$, where $0$ is a new element not included in $\Omega$.

An \emph{$\Omega$-monomial} $u$ is a non-identity  element of $Z^\Omega$, which can be uniquely written as $u = u_1u_2\cdots u_n$, where $u_i\in Z \sqcup \brck{Z^\Omega}{\tau}$ and $n$ is the \emph{breadth} of $u$, denoted by $\operatorname{bre}(u) = n$.
The \emph{depth} of $u$ is defined by
\[
\operatorname{dep}(u) \coloneq \min \left\{n \mid u \in Z^\Omega_n \right\}.
\]

We denote by $(\lin{Z}, \brck{~}{\tau})$ the \emph{free $\Omega$-algebra} on $Z$, defined as the $\bk$-linear span of $\Omega$-monomials in $Z^\Omega$, and whose operators $\lfloor ~ \rfloor_\tau$ are induced by linearity.
This defines a functor $ \lin{-}: \catego{Set} \to \OAlg{} $, left adjoint to the forgetful functor $ \calu: \OAlg{} \to \catego{Set} $.

\subsubsection{Examples}
\label{SSS:ExamplesFreeOmegaAlgebras}
We present examples of free 
$\Omega$-algebras, which will be further studied in Section~\ref{S:ExampleOfResolutionOfOmegaAlgebras}.
\begin{enumerate}
\item The \emph{free differential algebra of weight $\lambda$} on a set $Z$ is the free $\Omega$-algebra, denoted by $\DA{\lambda}{Z}$, equipped with an operator $D$ such that
\begin{eqn}{equation}
\label{E:DifferentialRelation}
D(ab) = D(a)b + aD(b) + \lambda D(a)(b) \quad \text{and}\quad  D(1)=0, \quad \text{ for all } a, b \in \DA{\lambda}{Z}.
\end{eqn}
\item The \emph{free Rota-Baxter algebra of weight $\lambda$} on a set $Z$ is the free $\Omega$-algebra, denoted by $\RBA{\lambda}{Z}$, equipped with an operator $P$ such that 
\begin{eqn}{equation}
\label{E:RotaBaxterRelation}
P(a)P(b)= P(aP(b)) + P(P(a)b) + \lambda P(ab), \quad \text{ for all $a,b\in \RBA{\lambda}{Z}$.}
\end{eqn}
\item
The \emph{free differential Rota-Baxter algebra of weight $\lambda$} on a set $	Z$ is the free $\Omega$-algebra, denoted by  $\DRBA{\lambda}{Z}$,  equipped with two  operators  $D$ and $P$  satisfying \eqref{E:DifferentialRelation}, \eqref{E:RotaBaxterRelation} and the relation
\[
D(P(a))=a, \quad \text{ for all } a \in \DRBA{\lambda}{Z}.
\]
\end{enumerate}
We will use $D$ and $P$ to denote the \emph{differential operator} and \emph{Rota-Baxter operator}, respectively, instead of $\brck{~}{D}$ and $\brck{~}{P}$, following the conventions in \cite{QQWZ21, GGR15, BokutChenQiu}.

\subsubsection{Free operated bimodules}
\label{SSS:FreeOmegaBimodule}
Let $(A, \Tr)$ be an $\Omega$-algebra.
An \emph{$(A, \Tr)$-bimodule} is an $\Omega$-vector space $(M, \Tr^M)$, where $M$ is an $A$-bimodule.
A \emph{morphism of $(A, \Tr)$-bimodules} from $(M, \Tr^{M})$ to $(M', \Tr^{M'})$ is a morphism $f: M \to M'$ of both $\Omega$-vector spaces and $A$-bimodules.
We denote by $\Omega\text{-}\Bimod(A)$ the category of $(A, \Tr)$-bimodules and their morphisms.

The \emph{free $(A, \Tr)$-bimodule} on a set $Z$, denoted by $\mathcal{M}_{\Omega}(Z)$, is constructed as follows. 
We set $\mathcal{M}_0(Z) \coloneq A \otimes Z \otimes A$, and for each $n \geq 0$,
\[
\mathcal{M}_{n+1}(Z) \: \coloneq \: A\otimes\brck{\mathcal{M}_{n}(Z)}{\Omega \sqcup \{0\}}\otimes A.
\]
By construction, we have $\mathcal{M}_{i}(Z) \subset \mathcal{M}_{i+1}(Z)$ for $i \geq 0$, and we define $\mathcal{M}_{\omega}(Z) \coloneq \varinjlim \mathcal{M}_{n}(Z)$. The maps sending $m \in \mathcal{M}_{n}(Z)$ to $\lfloor m \rfloor_{\tau} \in \mathcal{M}_{n+1}(Z)$ induce a family of operators $\lfloor~\rfloor_\tau$ on $\mathcal{M}_{\omega}(Z)$ indexed by $\tau \in \Omega$.
Finally, we define $\mathcal{M}_{\Omega}(Z)$ as the linear span of the elements in $\mathcal{M}_{\omega}(Z)$.

\subsubsection{Operated contexts}
\label{SSS:OneHole}
Let $\square$ be a symbol not in $Z$.
The $\Omega$-monoid of \emph{(one hole) $\Omega$-contexts} is the subset of $(Z\sqcup\square)^\Omega$, denoted by $Z^\Omega[\square]$, consisting of $\Omega$-monomials with $\square$ occurring only once. 

For $q\in  Z^\Omega[\square]$ and $u\in Z^\Omega$, we define $q|_{u}\in Z^\Omega$ as the element obtained by replacing the symbol~$\square$ in $q$ with $u$. 
The \emph{composition} of contexts $p$ and $q$ in $Z^\Omega[\square]$ is defined by $q\circ p\coloneq p|_q$.
For $a=\sum_i\lambda_iu_i\in \lin{Z}$, with~$\lambda_i\in\bk$ and $u_i\in Z^\Omega$, we define
\[
q|_a\coloneq\sum_i\lambda_iq|_{u_i}.
\]
Similarly, we extend this structure by linearity into the bimodule of \emph{(one hole) $\Omega$-contexts}, denoted by~$\mathcal{M}_{\omega}(Z)[\square]$.
Any element of $\mathcal{M}_{\Omega}(Z)$ can be written as a linear combination
\[
D|_x \: = \: (C_1\circ C_2 \circ \cdots \circ C_n)|_x,
\]
where $x\in Z$ and $C_k=\left\lfloor a_k\otimes\square \otimes b_k \right\rfloor_{\tau_k}$, with $a_k,b_k\in A$, $\tau_k\in\Omega\sqcup\{0\}$ and  $1\leq k \leq n$. 

\subsubsection{Operated ideals}
An \emph{$\Omega$-ideal} of an $\Omega$-algebra $(A,\Tr)$ is an ideal of the algebra $A$ closed under the action of its operators.
We denote by $I_{\Omega}(S)$ the $\Omega$-ideal of $\lin{Z}$ generated by a subset $S$ of $\lin{Z}$, and by $\lin{Z}/I_{\Omega}(S)$ the quotient $\Omega$-algebra.

If $\Omega$ is empty, $\lin{Z}$ is the free associative algebra, and 
$I_{\Omega}(S)$ is made of all the linear combinations of monomials $\left.p\right|_{s}$,
where $s \in S$ and $p=u\square v \in Z^\Omega[\square]$ with $u, v \in Z^\Omega$.  
If $\Omega$ is not empty, an element in~$I_{\Omega}(S)$ is a linear combination 
\[
q|_s \: = \: (p_1\circ p_2 \circ \cdots \circ p_n)|_s,
\]
where $p_k = \left\lfloor u_k \square  v_k\right\rfloor_{\tau_k}$ with  $u_k, v_k \in Z^\Omega$ and $\tau_k \in \Omega\sqcup\{0\}$, for $1\leq k \leq n$.

\subsection{Higher operated algebras}
\label{SS:HigherOperatedAlgebras}
This subsection introduces  higher $\Omega$-algebras as a generalization of higher associative algebras introduced in \cite{GuiraudHoffbeckMalbos19}. 
We also make explicit their structure in terms of operated bimodules.

\subsubsection{Globular objects}
An \emph{(internal) globular object of a category~$\Cr$} is a sequence $X\coloneq(X_k)_{k\geq 0}$ of objects in $\mathcal{C}$, equipped with the following families of morphisms in~$\Cr$ 
\[
(s_k:X_{k+1}\fl X_k)_{k\geq 0},
\qquad
(t_k:X_{k+1}\fl X_k)_{k\geq 0},
\quad\text{and}\quad
(i_k:X_{k-1}\fl X_k)_{k\geq 1},
\]
satisfying the following globular and identity relations
\begin{eqn}{equation}
\label{E:GlobularRelations}
s_ks_{k+1} = s_kt_{k+1}, 
\qquad 
t_ks_{k+1} = t_kt_{k+1}
\qquad\text{and}\qquad
 s_ki_{k+1} = t_ki_{k+1} = \id_{X_k},
\end{eqn}
for every $k\geq 0$.
The elements of~$X_k$ are \emph{$k$-cells of~$X$}.
For $x\in X_k$ with $k\geq 1$, the $(k-1)$-cells~$s_{k-1}(x)$ and~$t_{k-1}(x)$ are the \emph{source} and \emph{target} of $x$, also denoted by $s(x)$ and $t(x)$.
A \emph{morphism of globular objects}~$f : X \fl Y$ is a family $(f_k : X_k \fl Y_k)_{k\geq 0}$ of morphisms in $\Cr$ that commutes with morphisms $s_k$,~$t_k$ and $i_k$.
We denote by~$\Glob(\Cr)$ the category of globular objects  of~$\Cr$ and their morphisms. 
For~$n\geq 0$, we denote by $\Glob_n(\Cr)$ the full subcategory of $\Glob(\Cr)$ consisting of globular objects indexed up to $n$, and called \emph{$n$-globular objects}. 

For a globular object $X$ and $0\leq m\leq k \leq n$, the \emph{$k$-source}, \emph{$k$-target}, and \emph{$k$-identity maps} are defined by iterated composition
\[
s_{n-1}\ldots s_k:X_n \fl X_k,
\quad 
t_{n-1}\ldots t_k:X_n \fl X_k
\quad \text{ and } \quad
i_{m+1}\ldots i_k:X_m \fl X_k,
\]
also respectively denoted by $s_k$, $t_k$ and $i_k$ for short.
We will write~$x$ instead of $i_k(x)$. 
For~$k \geq 0$, we denote by $X \star_k X$ the pullback of the morphisms $s_k,t_k : X \fl X_k$.
For $n\geq 1$, two $n$-cells $a$ and $b$ are  \emph{parallel} if $s(a)=s(b)$ and $t(a)=t(b)$.   
An \emph{$n$-sphere} of $X$ is a pair of parallel $n$-cells.

\subsubsection{Higher categories}
For $n \geq 0$,  an \emph{(internal) $n$-category} in $\Cr$ consists of an $n$-globular object $X$ of $\Cr$ and a \emph{$k$-composition} map $X_n \star_k X_n \to X_n$ for all $k < n$, along with $k$-source and $k$-target maps
\[
\xymatrix @!C @C=3em @W=2em {
X_k
& X_n,
\ar@<-0.8ex> [l] _-{s_k}
\ar@<0.8ex> [l] ^-{t_k}
}
\]
such that the $2$-globular object
\[
\xymatrix @!C @C=3em @W=2em {
X_j
& X_k
\ar@<-0.8ex> [l] _-{s_j}
\ar@<0.8ex> [l] ^-{t_j}
& X_l
\ar@<-0.8ex> [l] _-{s_k}
\ar@<0.8ex> [l] ^-{t_k}
}
\]
is a 2-category in $\Cr$ for all $j < k < l$.
We denote by $\Cat_n(\Cr)$ the category of $n$-categories in $\Cr$ and by~$\Cat_\omega(\Cr)$ the category of \emph{$\omega$-categories}, defined as the limit of the  sequence $\big(\Cat_i(\Cr) \leftarrow \Cat_{i+1}(\Cr)\big)_{i\geq 0}$ of forgetful functors.

\subsubsection{Higher $\Omega$-algebras}
For $n \in \mathbb{N} \sqcup \{\omega\}$, we denote by $\OAlg{n}$ the category $\Cat_n(\OAlg{})$, whose objects are called \emph{$\Omega$-$n$-algebras} or \emph{n-algebras} for short. 

Given an $n$-algebra $(A,\Tr_\tau)$, for each $\tau \in \Omega$, there is a corresponding operator $\Tr_{\tau,k}$ on $A_k$.
The source, target, and identity maps being morphisms of $\Omega$-algebras, the following commuting relations
\begin{eqn}{equation}
\label{E:OperatorsAndsti}
s(\Tr_{\tau,k}(a)) =\Tr_{\tau,k-1}(s(a)),
\quad
t(\Tr_{\tau,k}(a)) =\Tr_{\tau,k-1}(t(a))
\quad \text{ and } \quad
i(\Tr_{\tau,k}(a)) =\Tr_{\tau,k+1}(i(a)),
\end{eqn}
hold for all $a\in A_k$ and $\tau\in\Omega$. The third relation can also be written as $1_{\Tr_{\tau,k}(a)} = \Tr_{\tau,k+1}(1_a)$.
In the sequel, we will omit the $\tau$ notation in such formulas.
From this structure, we deduce the following $\Omega$-algebraic properties.

\begin{proposition}
\label{L:PropertiesOmegaAlgebras}
For an $\omega$-algebra $(A, \Tr)$, the following conditions hold
\begin{enumerate}
\item\label{1} \label{I:nVect-1} 
For all $0\leq k <n$ and $(a,b)$ in $A\star_k A$, we have $a \star_k b = a - t_k (a) + b$,
\item For all $n \geq 1$, every $n$-cell $a$ of $A$ is invertible with inverse $a^- = s(a) - a + t(a)$,
\item For all $0\leq k <n$ and $(a,b)$ in $A\star_k A$, we have  $\Tr_n(a^{-})=\Tr_n(a)^{-}$ and $\Tr_n(a \star_k b )=\Tr_n(a) \star_k \Tr_n(b)$.
\end{enumerate}
\end{proposition}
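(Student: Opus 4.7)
The key insight is that, inside an internal $\omega$-category in $\Omega\zhx\mathsf{Alg}$, every composition map $\star_k : A \star_k A \to A$ is a morphism of $\Omega$-algebras, hence in particular $\bk$-bilinear when restricted to composable pairs, and each operator $\Tr_n$ commutes with source, target, and identity by \eqref{E:OperatorsAndsti}. The plan is to derive (i) from bilinearity plus the identity axioms, then obtain (ii) and (iii) as direct consequences.

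For (i), fix $(a,b) \in A \star_k A$ and set $c \coloneq t_k(a) = s_k(b)$, viewed as an $n$-cell via $i_k$. The first move is to decompose $(a,b)$ inside the pullback $A\star_k A$. I would write
\[
(a,b) \: = \: (a,c) \: + \: (c,b) \: - \: (c,c),
\]
and verify that each summand lies in $A \star_k A$: indeed $t_k(a) = c = s_k(c)$, $t_k(c) = c = s_k(b)$, and $t_k(c) = c = s_k(c)$; moreover the partial sums on both sides remain composable because $t_k$ and $s_k$ are $\bk$-linear. Applying $\star_k$, which is a $\bk$-linear morphism of $\Omega$-algebras, I obtain
\[
a \star_k b \: = \: (a\star_k c) \: + \: (c \star_k b) \: - \: (c \star_k c).
\]
The identity axioms in the $n$-category $A$ yield $a \star_k c = a$, $c \star_k b = b$, and $c \star_k c = c$, giving the announced formula $a \star_k b = a - t_k(a) + b$.

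For (ii), I would define $a^- \coloneq s(a) - a + t(a)$ and first check that $a$ and $a^-$ are $(n-1)$-composable in both orders: using $\bk$-linearity of $s_{n-1}, t_{n-1}$ and the globular relations \eqref{E:GlobularRelations}, one computes $s(a^-) = t(a)$ and $t(a^-) = s(a)$. Then, applying the formula from (i) with $k = n-1$,
\[
a \star_{n-1} a^- \: = \: a - t(a) + a^- \: = \: s(a),
\qquad
a^- \star_{n-1} a \: = \: a^- - s(a) + a \: = \: t(a),
\]
so $a^-$ is the two-sided inverse of $a$ with respect to $\star_{n-1}$.

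For (iii), I would invoke $\bk$-linearity of each $\Tr_n$ together with the commutations $s\Tr_n = \Tr_{n-1}s$, $t\Tr_n = \Tr_{n-1}t$, $i\Tr_{n-1} = \Tr_n i$ from \eqref{E:OperatorsAndsti}. Applying $\Tr_n$ to the formula of (i) gives
\[
\Tr_n(a\star_k b) \: = \: \Tr_n(a) - \Tr_n(t_k(a)) + \Tr_n(b) \: = \: \Tr_n(a) - t_k(\Tr_n(a)) + \Tr_n(b),
\]
which equals $\Tr_n(a) \star_k \Tr_n(b)$ again by (i); the analogous computation for the inverse yields $\Tr_n(a^-) = \Tr_n(a)^-$. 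The only delicate point is the decomposition step in (i), where one must keep careful track of composability inside the $\Omega$-algebraic pullback; the rest is routine manipulation using linearity and the globular identities.
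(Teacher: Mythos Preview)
Your proof is correct and follows essentially the same approach as the paper. The paper simply defers parts (i) and (ii) to \cite[Prop.~1.2.3]{GuiraudHoffbeckMalbos19} for the underlying associative structure and notes that (iii) follows from \eqref{E:OperatorsAndsti}; your argument spells out exactly that standard derivation, using the linearity of $\star_k$ on the pullback for (i), the resulting formula for (ii), and then linearity of $\Tr_n$ together with its commutation with source and target for (iii).
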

\begin{proof}
The proofs of {\bf i)} and {\bf ii)} are given in 
{\cite[Prop. 1.2.3]{GuiraudHoffbeckMalbos19}} using the underlying associative structure. 
Property {\bf iii)} follows directly from \eqref{E:OperatorsAndsti}.
\end{proof}

\subsubsection{Globular operated bimodules}
We denote by $\Glob(\Omega\text{-}\Bimod)$  the category of globular operated bimodules  whose objects are pairs $\big((A, \Tr), (M, \Tr')\big)$, where $(A, \Tr)$ is an $\Omega$-algebra and $(M, \Tr')$ is a globular $(A, \Tr)$-bimodule. 
Let $\Glob_{\text{sub}}(\Omega\text{-}\Bimod)$ denote its \emph{full subcategory}, consisting of those pairs for which $(M_0, \Tr'_0)$ is isomorphic to $(A, \Tr)$ with its canonical $(A, \Tr)$-bimodule structure, and satisfying the following relation, for all $a,b$ in $M_n$,
\begin{eqn}{equation}
\label{E:LinearExchangeRelation}
a s_0(b) + t_0(a) b - t_0(a) s_0(b) \: = \: s_0(a) b + a t_0(b) - s_0(a) t_0(b).
\end{eqn}

The following result extends the one known for  associative algebras {\cite[Thm. 1.3.3]{GuiraudHoffbeckMalbos19}}.

\begin{proposition}
\label{P:IsomorphismOmegaAlgebrasBimodules}
The categories $\OAlg{\omega}$ and $\Glob_{\text{sub}}(\Omega\text{-}\Bimod)$ are isomorphic.
\end{proposition}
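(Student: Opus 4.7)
The plan is to reduce the statement to the associative analogue {\cite[Thm. 1.3.3]{GuiraudHoffbeckMalbos19}}, then check that adding the operator structure on both sides is compatible under the functors of that isomorphism. Concretely, I would construct functors
\[
F : \OAlg{\omega} \longrightarrow \Glob_{\text{sub}}(\Omega\text{-}\Bimod)
\qquad\text{and}\qquad
G : \Glob_{\text{sub}}(\Omega\text{-}\Bimod) \longrightarrow \OAlg{\omega},
\]
each extending the functors of \emph{loc.\ cit.}\ by transporting operators in the obvious way, and verify they are mutually inverse.

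For $F$, given an $\omega$-algebra $(A,\Tr)$, I would set $F(A,\Tr) = \big((A_0,\Tr_0),(A_\bullet,\Tr_\bullet)\big)$, with the $(A_0,\Tr_0)$-bimodule structure on each $A_n$ defined by $u\cdot a\cdot v := i_n(u)\star_0 a\star_0 i_n(v)$, and with the $\Omega$-operator on $A_n$ given by $\Tr_n$. Globularity of the bimodule and the identity $(M_0,\Tr'_0)\cong (A,\Tr)$ are immediate, while the exchange relation~\eqref{E:LinearExchangeRelation} reproduces the linear form of the interchange law between $\star_0$ and the higher compositions, as already established in {\cite[Thm. 1.3.3]{GuiraudHoffbeckMalbos19}} at the level of underlying vector spaces. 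That $\Tr'_n$ is a morphism of bimodules follows from~\eqref{E:OperatorsAndsti} and from $\Tr_n$ being a morphism of $\Omega$-algebras, which in particular gives $\Tr_n(i_n(u)\star_0 a\star_0 i_n(v)) = i_n(\Tr_0(u))\star_0 \Tr_n(a)\star_0 i_n(\Tr_0(v))$. Globularity $\Tr'\circ s = s\circ \Tr'$ etc.\ is exactly~\eqref{E:OperatorsAndsti}.

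For $G$, given $\big((A,\Tr),(M,\Tr')\big)$ in $\Glob_{\text{sub}}$, I would set $G(\ldots)_n := M_n$ with $\Omega$-algebra structure
\[
a\cdot b \: := \: a\,s_0(b) + t_0(a)\,b - t_0(a)\,s_0(b),
\]
which is unambiguous thanks to~\eqref{E:LinearExchangeRelation}, and with operator $\Tr'_n$. The $k$-compositions for $0\leq k<n$ are then forced by \Cref{L:PropertiesOmegaAlgebras}\ref{1} to be $a\star_k b = a - t_k(a) + b$. The $\omega$-category structure on the underlying vector spaces, together with the compatibility of this multiplication with source, target and identity maps, is given by~{\cite[Thm. 1.3.3]{GuiraudHoffbeckMalbos19}}. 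It then remains to check that the family $(\Tr'_n)_{n\geq 0}$ assembles into a family of $\Omega$-operators on the resulting $\omega$-category, i.e.\ that each source, target, identity and composition map commutes with $\Tr'$. The first three are part of the globularity of $(M,\Tr')$, and for the compositions one uses linearity of $\Tr'_n$ together with the defining formulas: for instance $\Tr'_n(a\star_k b) = \Tr'_n(a) - \Tr'_n(t_k(a)) + \Tr'_n(b) = \Tr'_n(a)\star_k\Tr'_n(b)$, using $\Tr'_n\circ t_k = t_k\circ\Tr'_n$.

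Finally, $FG = \id$ and $GF = \id$ because the underlying assignments of linear data are mutually inverse by {\cite[Thm. 1.3.3]{GuiraudHoffbeckMalbos19}}, and the operator structure is carried across unchanged in either direction. The main obstacle I anticipate is not conceptual but bookkeeping: one must verify, level by level, that the $\Omega$-multiplication reconstructed from the bimodule structure via~\eqref{E:LinearExchangeRelation} interacts correctly with the operators $\Tr'_n$, so that each $\Tr'_n$ is a morphism of $\Omega$-algebras and not merely a morphism of $\Omega$-bimodules. This reduces to the identity $\Tr'_n(a s_0(b)) = \Tr'_n(a)\, s_0(\Tr'_n(b))$ and its symmetric analogue, which follows from $\Tr'$ being a morphism of $(A,\Tr)$-bimodules together with $\Tr'\circ s_0 = s_0\circ \Tr'$ and $\Tr'\circ t_0 = t_0\circ\Tr'$.
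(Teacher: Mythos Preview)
Your strategy—reduce to \cite[Thm.~1.3.3]{GuiraudHoffbeckMalbos19} and carry the operators along—is precisely what the paper intends; indeed it offers no proof beyond that reference. Two corrections, however.

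First, the $(A_0,\Tr_0)$-bimodule action on $A_n$ should be $u\cdot a\cdot v := i_n(u)\,a\,i_n(v)$ using the \emph{algebra product} of $A_n$, not $\star_0$; the $0$-composition is only defined on $0$-composable pairs and cannot give a bimodule structure for arbitrary $u,v\in A_0$.

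Second, and more importantly, the ``main obstacle'' you anticipate is a phantom arising from a misreading of the definitions. By~\ref{SSS:OmegaObjects}, an $\Omega$-algebra is an associative algebra equipped with \emph{linear} operators $\Tr_\tau$, with no multiplicativity condition; likewise an $(A,\Tr)$-bimodule~(\ref{SSS:FreeOmegaBimodule}) carries an operator $\Tr^M$ with \emph{no} required compatibility with the $A$-action. Hence in the $F$-direction, $\Tr_n$ need not be---and typically is not---an $A_0$-bimodule morphism, but none is required. In the $G$-direction, once $M_n$ is given the product $a\cdot b := a\,s_0(b)+t_0(a)\,b-t_0(a)\,s_0(b)$, the map $\Tr'_n$ is a valid operator on this algebra simply because it is linear. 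The identity $\Tr'_n(a\,s_0(b)) = \Tr'_n(a)\,s_0(\Tr'_n(b))$ you set out to prove is in general \emph{false} and, fortunately, not needed. The only compatibilities to verify are those of~\eqref{E:OperatorsAndsti}, namely that $s,t,i$ commute with the operators on both sides; these are exactly the content of being a globular object in $\Omega\text{-}\Bimod(A_0)$, so they hold by hypothesis in one direction and by~\eqref{E:OperatorsAndsti} in the other.
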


\subsection{Polygraphs for operated algebras}
\label{SS:Polygraphs for operated algebras}
The structure of polygraphs was introduced by Street~\cite{Street76} and Burroni~\cite{Burroni93} as systems of generators and relations for strict $\omega$-categories. We refer to \cite{Polybook2025} for a comprehensive presentation of the structure of polygraphs in rewriting theory and higher category theory. 
More recently, polygraphs have been introduced for presentations of higher associative algebras~\cite{GuiraudHoffbeckMalbos19} and shuffle operads~\cite{MalbosRen23}.
In this subsection, we develop the structure of polygraphs for presentations of higher $\Omega$-algebras.

\subsubsection{Extended higher $\Omega$-algebras}
For $n \geq 0$, the category $\OAlg{n}^+$ of \emph{extended $n$-algebras} is defined as the pullback of forgetful functors
\[
\xymatrix @=1.75em{
\OAlg{n}^+
	\pullbackcorner
\ar[r]
\ar[d]
&
\Glob_{n+1}(\Set)
\ar[d]^-{\calu_n}
\\
\OAlg{n}
\ar[r] _-{{\calv_n^\Omega}}
&
\Glob_{n}(\Set)
}
\]
where the functor $\calu_n$ forgets the structure of $(n+1)$-cells and the functor $\calv_n^\Omega$ forgets the $\Omega$-algebraic structure. 
Explicitly, an object in~$\OAlg{n}^+$ is a pair~$\big((A,\Tr),X\big)$, where~$(A,\Tr)$ is an $n$-algebra and~$X$ is a \emph{cellular extension} of~$A_n$, that is a set $X$ equipped with two maps
\[
\xymatrix @=1.5em {
A_n 
& X
\ar@<-0.5ex> [l] _-{s}
\ar@<0.5ex> [l] ^-{t}
}
\]
such that, for every $x \in X$, the pair $(s(x), t(x))$ is an $n$-sphere of $A$. 
A morphism from $\big((A,\Tr), X\big)$ to $\big((B,\Tr'), Y\big)$ in~$\OAlg{n}^+$ consists of a pair $(f, g)$, where $f: (A,\Tr) \to (B,\Tr')$ is a morphism of $n$-algebras, and $g: X \to Y$ is a map that commutes with the source and target maps.

\subsubsection{Free higher $\Omega$-algebras}
For $n\geq 1$, we define $A[X]$ the \emph{free $n$-algebra} on an extended $(n-1)$-algebra $\big((A,\Tr),X\big)$ as follows. First, we construct an $(A_0,\Tr_0)$-bimodule $(M, \Tr')$, by setting
\[
M \: = \: \mathcal{M}_{\Omega}(X)\oplus A_{n-1},
\]
where $\mathcal{M}_{\Omega}(X)$ is the free $(A_0, \Tr_0)$-bimodule on $X$  as defined in \eqref{SSS:FreeOmegaBimodule}, and
\[
\Tr'_\tau(m+c) \: \coloneq \: \left\lfloor m\right \rfloor_\tau+\Tr_{\tau,n-1}(c),
\]
for all $\tau\in\Omega$, $m\in \mathcal{M}_{\Omega}(X)$ and $c\in A_{n-1}$. Following \eqref{SSS:OneHole}, the elements of $(M, \Tr')$ are the linear combinations of  $D|_x$ and $(n-1)$-cells $c$ of $(A, \Tr)$, where  $x \in X$ and $D\in \mathcal{M}_{\omega}(Z)[\square]$.
The source, target and identity
maps $s, t$ and $i$ in $(M, \Tr')$  are defined by 
\begin{eqn}{equation}
\label{E:mapsBimodules}
s(\left.D \right|_{x})  = \left.D \right|_{s(x)}, \quad t(\left.D \right|_{x})  = \left.D \right|_{t(x)}  \quad 
\text{and} \quad
s(c)=t(c) =i(c) =c.
\end{eqn}
We define the $(A_0, \Tr_0)$-bimodule $A[X]_n$ as the quotient of $(M, \Tr')$ by the $(A_0, \Tr_0)$-bimodule ideal generated by elements
\[
\big(a s_0(b)+t_0(a) b-t_0(a) s_0(b)\big)- \big(s_0(a) b+a t_0(b)-s_0(a) t_0(b)\big),
\]
for all $a, b$ in $\mathcal{M}_{\Omega}(X)$. 
The source, target and identity maps defined in \eqref{E:mapsBimodules} are compatible with this quotient. Hence, by Proposition~\ref{P:IsomorphismOmegaAlgebrasBimodules}, the $(A_0, \Tr_0)$-bimodule $A[X]_n$ extends $(A, \Tr)$ uniquely into the free $n$-algebra $A[X]$.

\subsubsection{Operated polygraphs}
Let us define the category $\Pol_n(\OAlg{})$ of \emph{\LO-$n$-polygraphs} by induction on $n$.
For $n = 0$, we define $\Pol_0(\OAlg{})$ as the category of sets. The \emph{free $0$-algebra functor} maps a set $Z$ to the free $\Omega$-algebra $\lin{Z}$.
For $n\geq1$,
assuming that the category $\Pol_{n-1}(\OAlg{})$ and the free $(n-1)$-algebra functor $\mathscr{A}_{n-1} : \Pol_{n-1}(\OAlg{}) \fl \OAlg{n-1}$ have been constructed, we define the category $\Pol_n(\OAlg{})$ as the pullback
\begin{eqn}{equation}
\label{E:PolygraphFunctor}
\vcenter{\xymatrix @=2.5em{
\Pol_n(\OAlg{})
	\pullbackcorner
\ar[r]
\ar[d] _-{\mathcal{V}_{n-1}}
&
\OAlg{n-1}^+
\ar[d]^{\calw_{n-1}}
\\
\Pol_{n-1}(\OAlg{})
\ar[r]_(.57){\mathscr{A}_{n-1}}
&
\OAlg{n-1}
}}
\end{eqn}
of the functor $\mathscr{A}_{n-1}$ and the functor ${\calw_{n-1}}$ that forgets the cellular extension.
The free $n$-algebra functor is defined as the composition
\[
\mathscr{A}_n : \Pol_n(\OAlg{}) \longrightarrow \OAlg{n-1}^+ \longrightarrow \OAlg{n},
\]
of the functor $\Pol_n(\OAlg{}) \to \OAlg{n-1}^+$ from \eqref{E:PolygraphFunctor}, followed by the functor mapping $(A, X)$ to $A[X]$.
The category $\Pol_\omega(\OAlg{})$ of \emph{$\Omega$-$\omega$-polygraphs} is defined as the limit of the sequence $(\mathcal{V}_n : \Pol_n(\OAlg{})\fl \Pol_{n-1}(\OAlg{}))_{n>0}$ of forgetful functors $\mathcal{V}_{n-1}$ defined by the pullback \eqref{E:PolygraphFunctor}.

Expanding this definition, an \LO-$n$-polygraph is a sequence $X = (Z, X_1, \ldots, X_n)$ made of a set $Z$ of~\emph{$0$-generators}, a cellular extension $X_1$ of $\lin{Z}$, and, for each $1 \leq k \leq n-1$, a cellular extension $X_{k+1}$ of the free $k$-algebra $\mathscr{A}_k(Z, X_1, \ldots, X_k)$, whose elements are called \emph{$(k+1)$-generators} of $X$.
We will denote the free $n$-algebra on $X$ by $\lin{X}$.

\subsubsection{Higher $\Omega$-monomials}
\label{SSS: HigherOmegaMonomials}
Let $X$ be an \LO-$\omega$-polygraph.
Every $0$-cell $a$ of $\lin{X}$ can be uniquely written as a linear combination
\[
a = \sum_{i=1}^p \lambda_i u_i
\]
of distinct $\Omega$-monomials $u_i$ with  nonzero scalars $\lambda_i$. 
The \emph{support} of $a$ is the set $\supp(a) \coloneq \{u_1,\dots,u_p\}$. 

For $n\geq1$, an \emph{$\Omega$-$n$-monomial}, or \emph{$n$-monomial} for short,
$\left.q\right|_{\alpha}$ of $\lin{X}$ is an $n$-cell of $\lin{X}$ where~$\alpha$ is an $n$-generator of $X$ and~$q\in Z^\Omega[\square]$. Following the construction of the free $n$-algebra, every $n$-cell $a$ of $\lin{X}$ can be written as a linear combination
\begin{eqn}{equation}
\label{E:OmegaMonomialDecomposition}
a = \sum_{i=1}^p \lambda_i v_i + 1_c
\end{eqn}
of distinct $\Omega$-$n$-monomials  $v_i$ and an $(n-1)$-cell $c$. This decomposition is unique up to the exchange relation \eqref{E:LinearExchangeRelation}.
The \emph{size of~$a$} is the minimum number of $\Omega$-$n$-monomials required to write~$a$ as in \eqref{E:OmegaMonomialDecomposition}.

\subsubsection{Linear polygraphs}
\label{R:OmegaEmpty}
\LO-polygraphs are natural generalizations of \emph{linear polygraphs} introduced in \cite{GuiraudHoffbeckMalbos19}.
Indeed, when $\Omega$ is empty, an \LO-$\omega$-polygraph is a linear $\omega$-polygraph, presenting an $\omega$-associative algebra.
We will denote by $\Pol_n(\Alg)$ the category of linear $n$-polygraphs and their morphisms. 

\section{Rewriting in operated algebras}
\label{S:OperatedRewriting}
This section presents the main rewriting properties of \LO-1-polygraphs and the coherent critical branching theorem in the operated setting. We compare the shape of critical branchings generated by \LO-1-polygraphs with those of linear 1-polygraphs, with further exploration in Section~\ref{S:Polyautomata}. Additionally, we relate convergent \LO-1-polygraphs to Gröbner–Shirshov theory for $\Omega$-algebras.

\subsection{Polygraphic presentations of operated algebras}
In this subsection, we present the rewriting properties of \LO-$1$-polygraphs. Inspired by~\cite{GuiraudMalbos09}, we introduce the notion of derivations to prove the termination of \LO-$1$-polygraphs.

\subsubsection{Operated polygraphic rewriting}
\label{SSS:1-polygraph}
 An \LO-$1$-polygraph is a pair $X=(Z,X_1)$, where $Z$ is a set and $X_1$ is a cellular extension
\[
\xymatrix @=1.5em {
\lin{Z} 
& X_1.
\ar@<-0.5ex> [l] _-{s}
\ar@<0.5ex> [l] ^-{t}
}
\]
A $1$-cell $f$ in the free $1$-algebra $\lin{X}$ can be written as
\[
f = \sum_{i=1}^p \lambda_i \left.q_i\right|_{\alpha_i} + 1_c,
\]
where $\alpha_i \in X_1$, $q_i \in Z^\Omega[\square]$, $c \in \lin{Z}$ and $\left.q_i\right|_{\alpha_i}$ are $1$-monomials.
An \LO-$1$-polygraph $X$ is  \emph{left-monomial} if, for every $\alpha \in X_1$, the source $s(\alpha)$ is an $\Omega$-monomial in~$\lin{Z}$ that does not belong to $\operatorname{Supp}\big(t(\alpha)\big)$. A \emph{rewriting step} in $X$ is a 1-cell $\lambda f + 1_c$ in $\lin{X}$, where $\lambda \neq 0$, $f$ is of size 1, and $s(f) \notin \operatorname{Supp}(c)$. A 1-cell in $\lin{X}$ is \emph{positive} if it is a (possibly empty) 0-composition of rewriting steps in $X$.
A positive cell is \emph{finite} if it admits such a finite composition, and \emph{infinite} otherwise. 

\subsubsection{Presentations of $\Omega$-algebras}
For $\alpha\in X_1$, we set $\partial(\alpha) \coloneq s(\alpha) - t(\alpha)$. 
The \emph{ideal $I_\Omega(X)$ of $X$} is the $\Omega$-ideal of $\lin{Z}$ generated by the set of $0$-cells
\[
\partial(X_1) \: \coloneq  \: \{ \partial(\alpha) \mid \alpha \in X_1 \}.
\]
It consists of linear combinations of the form $\left.q_i\right|_{\partial(\alpha_i)}$,
where $\alpha_i \in X_1$ and $q_i \in Z^\Omega[\square]$.
The \emph{$\Omega$-algebra presented by $X$} is the quotient $\Omega$-algebra
\[
\overline{X} \: \coloneq \: \lin{Z}/I_\Omega(X).
\]
A \emph{presentation} of an $\Omega$-algebra $A$ is an $\Omega$-$1$-polygraph $X$ such that $A$ is isomorphic to $\overline{X}$.
Two \LO-$1$-polygraphs are \emph{Tietze-equivalent} if they present isomorphic algebras. 
Note that any \LO-1-polygraph is Tietze equivalent to a left-monomial one. 
Hence, from now on, all \LO-$1$-polygraphs are left-monomial.

\subsubsection{Termination and monomial orders}
\label{SSS:Termination}
An \LO-$1$-polygraph $X$ is \emph{terminating} if the 1-algebra $\lin{X}$ does not contain any infinite positive cell. A \emph{monomial order} on $Z^\Omega$ is a well-founded total order $\prec$ on $Z^\Omega$ stable under products and operators, meaning that
\[
u\prec v \Rightarrow q|_u\prec q|_v\quad\text{for all }u,v\in Z^\Omega\text{ and } q\in Z^\Omega[\square].
\]
We say that $\prec$ is \emph{compatible} with $X_1$ if $v \prec s(\alpha)$ holds for all $\alpha\in X_1$ with $v$ in $\operatorname{Supp}(t(\alpha))$.
If there exists a monomial order $\prec$ on $Z^\Omega$ compatible with $X_1$, then $X$ is terminating.

A \emph{rewriting order} on $X$ is a relation $\preccurlyeq_X$ on $\lin{Z}$ such that $v \preccurlyeq_X u$ if there exists a positive $1$-cell  $u\fl b$ for $u, v \in Z^\Omega$ and $v\in\operatorname{Supp}(b)$, or $b \preccurlyeq_X a$ if, for any $v \in \operatorname{Supp}(b)\setminus\operatorname{Supp}(a)$, there exists a $u \in \operatorname{Supp}(a)\setminus\operatorname{Supp}(b)$ such that $v \preccurlyeq_X u$. 
Define $v \prec_X u$ if $v \preccurlyeq_X u$ but $v \ne u$.
Note that $\prec_X$ is well-founded if $X$ terminates.

\subsubsection{Normal forms}
For an \LO-$1$-polygraph $X$, a $0$-cell $a$ of $\lin{X}$ is a \emph{normal form} if there is no rewriting step with source $a$, else it is \emph{reducible}. 
A \emph{normal form} of a $0$-cell $a$, denoted by $\Nf(a,X)$ or~$\rep{a}$ for short, is a normal form such that there exists a positive $1$-cell from $a$ to $\Nf(a,X)$. 
We denote by $\Nf(X)$ the set of normal forms of $\lin{X}$.

\subsubsection{Derivations}
A \emph{derivation} of $\lin{Z}$ with values in an $\lin{Z}$-bimodule $N$ is a linear map $d: \lin{Z} \to N$ satisfying the following conditions:
\begin{eqn}{equation}
\label{E:Derivation}
d(ab)=d(a)  \cdot b+ a \cdot d(b), \quad d(1)=0, \quad\text{ and } \quad d(\brck{a}{\tau})=\brck{d(a)}{\tau},
\end{eqn}
for all $a, b \in \lin{Z}$.  
A well-founded partial order $\prec$ on $N$ is \emph{monotone} if the following hold 
\[
m_1+n+m_2\succ m_1+n'+m_2,\quad u\cdot n\cdot v\succ u\cdot n'\cdot v \quad\text{ and }\quad  \brck{n}{\tau}\succ\brck{n'}{\tau},
\]
for all $m_1, m_2, n, n' \in N$ and $u, v \in Z^\Omega$ such that $n \succ n'$.

\begin{proposition}
\label{P:DerivationTerminating}
Let $X$ be an \LO-1-polygraph.
If there exist a derivation $d: \lin{Z} \fl N$ and a monotone well-founded partial order $\prec$ on $d(Z^\Omega)$, such that $d(s(\alpha)) \succ d(v)$, for all $\alpha\in X_1$ and $v\in\operatorname{Supp}(t(\alpha))$, then $X$ is terminating.
\end{proposition}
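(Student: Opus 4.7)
The plan is to use $d$ and $\prec$ to build a well-founded quantity on $0$-cells of $\lin{X}$ that is strictly decreased by every rewriting step; since well-founded orders admit no infinite strictly decreasing chains, this rules out infinite positive $1$-cells and yields the termination of $X$. Concretely, I would extend $\prec$ to the multiset order $\prec_{\mathrm{mul}}$ on finite multisets of elements of $d(Z^\Omega)$, which is again well-founded, and associate to each $a \in \lin{Z}$ the multiset $\Phi(a) := \{\!\{\, d(u) \mid u \in \operatorname{Supp}(a) \,\}\!\}$ of $d$-values of its support.

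The key step is an inductive lemma: for every $q \in Z^\Omega[\square]$, every $\alpha \in X_1$, and every $v \in \operatorname{Supp}(t(\alpha))$, one has $d(q|_{s(\alpha)}) \succ d(q|_v)$. The proof would go by induction on the depth of $q$. The base case $q = \square$ is precisely the hypothesis of the proposition. For the inductive step, I decompose $q$ along its outermost operation, either as a product involving a context $q'$ of smaller depth and an element of $Z^\Omega$, or as $\lfloor q' \rfloor_\tau$; I then expand $d(q|_b)$ for $b \in \{ s(\alpha), v \}$ via the Leibniz rule and the operator compatibility in \eqref{E:Derivation}, and, invoking the inductive hypothesis on $q'$, apply the three monotonicity axioms of $\prec$ (under additive shifts, bimodule actions by elements of $Z^\Omega$, and operators $\lfloor - \rfloor_\tau$) to propagate the strict inequality through the outer layer.

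Once the lemma is in place, termination follows quickly. Any rewriting step has the form $\lambda q|_\alpha + 1_c$, with source $a = \lambda q|_{s(\alpha)} + c$ and target $b = \lambda q|_{t(\alpha)} + c$, where $q|_{s(\alpha)} \notin \operatorname{Supp}(c)$. Writing $t(\alpha) = \sum_j \mu_j v_j$ with $v_j \in Z^\Omega$, the support of $b$ differs from that of $a$ only by the replacement of $q|_{s(\alpha)}$ with monomials occurring in $\operatorname{Supp}(q|_{v_j})$; the lemma ensures that each such monomial has $d$-value strictly below $d(q|_{s(\alpha)})$. This yields $\Phi(a) \succ_{\mathrm{mul}} \Phi(b)$, and well-foundedness of $\prec_{\mathrm{mul}}$ precludes any infinite composition of rewriting steps, so $X$ is terminating.

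The main obstacle is the inductive lemma. Expanding $d(q|_b)$ via the Leibniz rule yields summands in which $b$ appears both inside a $d$-value, where the inductive hypothesis combines with bimodule monotonicity to give the desired decrease, and as a bare monomial factor in $Z^\Omega$ multiplying $d$-values of parts of $q$ not involving $b$. Arranging the computation so that all these summands combine into a single monotone shift of the inductive inequality $d(q'|_{s(\alpha)}) \succ d(q'|_v)$ is the delicate point, and it is precisely what forces the three monotonicity axioms on $\prec$ to interact coherently with the bimodule and operator structure of $N$.
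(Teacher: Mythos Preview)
Your approach coincides with the paper's: both prove the context-compatibility claim $d(v)\succ d(v') \Rightarrow d(q|_v)\succ d(q|_{v'})$ by induction on the structure of $q$, handling the elementary product and operator cases exactly as you describe, and then deduce termination from well-foundedness of~$\prec$. The only variation is in the closing step --- the paper assumes an infinite positive cell and directly extracts a strictly $\prec$-decreasing chain $d(u_1)\succ d(u_2)\succ\cdots$ of $d$-values of successive redexes, whereas you route through the multiset extension of $\prec$ on $d$-images of supports; this is the standard alternative packaging of the same argument, and arguably the more careful one, since the paper's chain extraction leaves implicit why the redex at step $i{+}1$ may always be taken among the monomials produced at step~$i$.
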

\begin{proof}
Suppose that there exists such a derivation and prove that it is compatible with $\Omega$-contexts.
If $d(v) \succ d(v')$ holds for $v,v'\in Z^\Omega$, we have
\[
d(uvw)=d(u)\cdot vw + u\cdot d(v)\cdot w+uv \cdot d(w)\succ d(u)\cdot vw + u\cdot d(v')\cdot w+uv \cdot d(w)=d(uv'w),
\]
and
\[
d(\brck{v}{\tau})=\brck{d(v)}{\tau}\succ\brck{d(v')}{\tau}=d(\brck{v'}{\tau}),
\]
for all $u,w\in Z^\Omega$.
By induction, we deduce that  $d(\left.q\right|_v) \succ d(\left.q\right|_{v'})$, for every $q \in Z^\Omega[\square]$. 
Now, suppose that there exists an infinite positive cell
\[
a_1\fl a_2 \fl \cdots \fl a_n\fl \cdots,
\]
with $a_i\in \lin{Z}$. 
We deduce an infinite sequence $u_1\fl u_2\fl \cdots \fl u_n\fl \cdots$,
where $u_i=s(\left.q_i\right|_{\alpha_i})$ and $u_{i+1}\in \operatorname{Supp}(t(\left.q_i\right|_{\alpha_i}))$ for some $\alpha_i\in X_1$ and $q_i \in Z^\Omega[\square]$. By the compatibility of $\prec$, this implies a strictly decreasing chain
\[
d(u_1)\succ d(u_2)\succ\cdots\succ d(u_n)\succ\cdots, 
\]
which contradicts the well-foundedness of $\prec$.
Hence, the polygraph $X$ terminates.
\end{proof}

\subsection{Confluence and critical branchings of operated polygraphs}
This subsection deals with confluence properties of \LO-1-polygraphs. The classification of local branchings in \LO-1-polygraphs extends the case of linear 1-polygraphs in \cite{GuiraudHoffbeckMalbos19}. We highlight the differences in their critical  branchings, and state the coherent critical branching theorem in the operated setting.

\subsubsection{Branchings and confluence}
A \emph{branching} of an \LO-1-polygraph $X$ is a pair $(f, g)$ of positive 1-cells such that $s(f) = s(g)$. It is \emph{local} if both $f$ and $g$ are rewriting steps of $X$.
Given a cellular extension~$Y$ of $\lin{X}$, a branching~$(f,g)$ is \emph{$Y$-confluent} if there exist positive $1$-cells~$h$ and~$k$ in~$\lin{X}$ and a $2$-cell~$F$ in~$\lin{X}[Y]$ as follows
\[
\vcenter{\xymatrix @R=0.5em{
& b
\ar@/^/ [dr] ^-{h}
\ar@2 []!<0pt,-12.5pt>;[dd]!<0pt,12.5pt> ^-*+{F}
\\
a
\ar@/^/ [ur] ^-{f}
\ar@/_/ [dr] _-{g}
&& d
\\
& c
\ar@/_/ [ur] _-{k}
}}
\]
We say that~$X$ is \emph{$Y$-confluent} (resp.  \emph{locally $Y$-confluent) at a 0-cell $a$} if every branching (resp.\ local branching) of~$X$ with source~$a$ is $Y$-confluent, and that~$X$ is \emph{$Y$-confluent} (resp.\ \emph{locally $Y$-confluent}) if it is so at every $0$-cell of~$\lin{X}$. 
When $Y$ contains all $1$-spheres of $\lin{X}$, $Y$-confluence corresponds to the classical notion of \emph{confluence}. We say that $X$ is \emph{convergent} when it is both terminating and confluent.
Each $0$-cell~$a$ of $\lin{X}$ then has a unique normal form.

\subsubsection{Classification of local branchings}
\label{SSS:ClassificationOfLocalBranchings}
The local branchings of \LO-1-polygraphs fall into the following families
\begin{enumerate}
    \item \emph{Aspherical branchings}: $(\lambda f + c, \lambda f + c)$, where $f$ is a rewriting step of $X$, and $\lambda$ is a nonzero scalar.

    \item \emph{Additive branchings}: $(\lambda f + \mu v + c, \lambda u + \mu g + c)$, where $f: u \to a$ and $g: v \to b$ are 1-monomials in $\lin{X}$, $\lambda, \mu$ are nonzero scalars, and $c$ is a 0-cell in $\lin{X}$, satisfying $u \neq v$ and $u, v \notin \operatorname{Supp}(c)$.

    \item \emph{Peiffer branchings}: $(\lambda q|_{\lfloor fv \rfloor_{\tau}} + c, \lambda q|_{\lfloor ug \rfloor_{\tau}} + c)$,  
where $f: u \to a$ and $g: v \to b$ are 1-monomials in $\lin{X}$, $\lambda$ is a nonzero scalar, and $c$ is a 0-cell in $\lin{X}$, satisfying $q|_{\lfloor uv \rfloor_{\tau}} \notin \operatorname{Supp}(c)$.  
This case corresponds to the Peiffer branching in the associative setting, as defined in {\cite[Def. 3.2.2]{GuiraudHoffbeckMalbos19}}, when $q = \square$ and $\tau = 0$.

    \item \emph{Overlapping branchings}: $(\lambda f + c, \lambda g + c)$,  
where $f: u \to a$ and $g: u \to b$ are 1-monomials in $\lin{X}$ such that the pair $(f, g)$ is neither aspherical, additive, nor Peiffer.  
Here, $\lambda$ is a nonzero scalar, and $c$ is any 0-cell of $\lin{X}$, with $u \notin \operatorname{Supp}(c)$.
\end{enumerate}

\subsubsection{Critical branchings}
The \emph{critical branchings} of an $\Omega$-1-polygraph $X$ are the overlapping branchings in \eqref{SSS:ClassificationOfLocalBranchings} for which $\lambda = 1$ and $c = 0$, and which cannot be factored as $(f, g) = (\left.q\right|_{f'}, \left.q\right|_{g'})$, where~$q \in Z^\Omega[\square]$ and $q \neq \square$. Specifically, if we define a well-founded order $\prec$ on overlapping branchings as follows
\[
(\left.q\right|_{f'},\left.q\right|_{g'})\succ (f', g')   \text{ for } q\in Z^\Omega[\square] \text{ and } q\neq \square,
\]
then the critical branching is minimal with respect to this order. Explicitly, every overlapping branching has a unique decomposition  as $(\left.q\right|_{f}+c, \left.q\right|_{g}+c)$, where $(f, g)$ is the critical branching.
We denote the set of critical branchings of a polygraph $X$ by $\mathrm{CB}(X)$. 
In particular, there are the following two shapes of critical branchings, called \emph{intersection} $(fw,ug)$ and \emph{inclusion branching} $(\left.q\right|_{h},k)$ respectively,
\begin{eqn}{equation}
\label{E:OperatedCriticalBranchings}
\raisebox{0.7cm}{
\xymatrix @R=0.5em {
& aw
\\
uvw
\ar@/^/ [ur] ^-{fw}
\ar@/_/ [dr] _-{ug}
\\
& ub
}
\qquad\qquad\qquad
\xymatrix @R=0.5em {
& \left.q\right|_{a'}
\\
\left.q\right|_{v'}
\ar@/^/ [ur] ^-{\left.q\right|_{h}}
\ar@/_/ [dr] _-{k}
\\
& b'
}}
\end{eqn}
where $u,v,w,v' \in Z^{\Omega} \setminus \{1\}$, $q\neq \square$ and $f: uv\fl a$, $g: vw \fl b$, $h: v'\fl a'$, $k:\left.q\right|_{v'} \fl b'$ belong to $X_1$.

Let $Y$ be a cellular extension of $\lin{X}$. We say that~$X$ is \emph{critical $Y$-confluent at a 0-cell $a$} if every critical branching  of~$X$ with source~$a$ is $Y$-confluent, and that~$X$ is \emph{critical $Y$-confluent} if it is so at every~$0$-cell of~$\lin{X}$. 

\begin{remark}
\label{R:DifferencePolygraphAnd OmegaPolygraph}
When $\Omega = \emptyset$, in \eqref{E:OperatedCriticalBranchings}, $q$ takes the form $u' \square w'$, which corresponds to the inclusion branchings $(u'hw',k)$ in the associative setting. However, operators introduce additional complexity in the structure of critical branchings. For example, the rules $k:\left\lfloor xy \right\rfloor \to yx$ and $h:xy \to z$ give rise to an inclusion branching $(\left\lfloor h \right\rfloor,k)$ that cannot be expressed in the associative setting. In Section~\ref{S:Polyautomata}, we explain how to describe these critical branchings in terms of string overlaps.
\end{remark}

The following lemma is the operated analogue of {\cite[Lemmata 3.1.3 and 4.1.2]{GuiraudHoffbeckMalbos19}}:
\begin{lemma}
\label{SSS:LemmaOfCriticalBranchings}
Let~$X$ be an \LO-$1$-polygraph, and $Y$ be a cellular extension of~$\lin{X}$ such that $X$ is $Y$-confluent at every $0$-cell $b\prec_X a$  for some fixed $0$-cell~$a$ of~$\lin{X}$.
 Let~$f$ be a $1$-cell of~$\lin{X}$ that admits a decomposition
\[
a_0 \:\ofl{f_1}\: a_1 \:\ofl{f_2}\: \cdots \:\ofl{f_p}\: a_p
\]
into $1$-cells of size~$1$. If there exist positive $1$-cells $a\fl a_i$ for every~$0<i<p$, then there exist positive $1$-cells~$g$ and~$h$ in~$\lin{X}$ and a $2$-cell~$F$ in~$\lin{X}[Y]$ as in
\[
\xymatrix @R=1em {
& a_p
	\ar@/^/ [dr] ^-{h}
	\ar@2 []!<0pt,-12pt>;[d]!<0pt,3pt> ^-*+{F}
\\	
a_0 
	\ar@/^/ [ur] ^-{f} 
	\ar@/_/ [rr] _-{g}
&& a'
}
\]
\end{lemma}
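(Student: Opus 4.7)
The plan is to induct on the length $p$ of the decomposition of $f$ into $1$-cells of size $1$.

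For $p = 0$, the cell $f$ is the identity $1_{a_0}$, so taking $g = h = 1_{a_0}$ and $F$ the identity $2$-cell satisfies the conclusion trivially. For $p = 1$, no intermediate indices exist, and the single $1$-cell $f : a_0 \to a_1$ of size $1$ is either already a rewriting step of $X$—in which case we take $g = f$ and $h = 1_{a_1}$—or is the formal inverse in the group structure on $1$-cells provided by Proposition~\ref{L:PropertiesOmegaAlgebras}, in which case we take $g = 1_{a_0}$ and $h = f^{-}$. In both situations $F$ can be chosen to be an identity $2$-cell.

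For the inductive step, split $f = f' \star_0 f_p$, where $f' : a_0 \to a_{p-1}$ has length $p-1$ and $f_p : a_{p-1} \to a_p$ is of size $1$. The positivity hypothesis restricts to $f'$: positive $1$-cells $a \to a_i$ exist for $0 < i < p-1$, and one also exists for $i = p-1$. The inductive hypothesis applied to $f'$ delivers positive $1$-cells $g' : a_0 \to a''$ and $h' : a_{p-1} \to a''$ together with a $2$-cell $F'$ in $\lin{X}[Y]$ filling the resulting square. It remains to absorb $f_p$. If $f_p$ is the formal inverse of a rewriting step, then the composite $f_p^{-} \star_0 h' : a_p \to a''$ is positive, so we set $g = g'$, $h = f_p^{-} \star_0 h'$, and obtain $F$ from $F'$ by pasting with the identity witnessing $f_p \star_0 f_p^{-} = 1_{a_{p-1}}$. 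Otherwise $f_p$ is itself a rewriting step, and the pair $(h', f_p)$ forms a branching at $a_{p-1}$. By hypothesis there is a positive cell $a \to a_{p-1}$, placing $a_{p-1} \prec_X a$ (the strict case; if $a_{p-1} = a$, we shorten $f$ and re-enter the induction). The $Y$-confluence of $X$ at $a_{p-1}$ then supplies positive $1$-cells $k : a'' \to a'$, $\ell : a_p \to a'$ and a $2$-cell $G$ in $\lin{X}[Y]$ closing the diamond, and we set $g = g' \star_0 k$, $h = \ell$, and let $F$ be the vertical-then-horizontal pasting of $F'$ with $G$.

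The principal obstacle is the case analysis on the orientation of $f_p$, combined with the verification that $a_{p-1} \prec_X a$ holds strictly, which is what licenses the appeal to the $Y$-confluence hypothesis. Strictness is automatic whenever the positive cell witnessing $a \to a_{p-1}$ involves at least one non-trivial rewriting step; the degenerate case $a_{p-1} = a$ is absorbed by truncation. A secondary technical point is the coherent pasting of $2$-cells in $\lin{X}[Y]$, which is automatic from the free $2$-algebra structure and the exchange relation \eqref{E:LinearExchangeRelation}, but must be tracked so that $F$ is well-typed as a $2$-cell between the intended parallel $1$-cells.
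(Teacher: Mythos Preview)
Your overall strategy---induction on $p$, with the inductive step handling the last size-$1$ cell $f_p$ and invoking $Y$-confluence strictly below $a$---matches the approach the paper defers to in \cite[Lemmata~3.1.3 and~4.1.2]{GuiraudHoffbeckMalbos19}. The gap is in your case analysis on $f_p$: you claim that a $1$-cell of size $1$ is either a rewriting step or the formal inverse of one, and this is false. Writing $f_p = \lambda\,q|_\alpha + 1_c$ and $u = q|_{s(\alpha)}$, the cell $f_p$ is a rewriting step exactly when $u \notin \operatorname{Supp}(c)$. If $u$ appears in $c$ with coefficient $\mu$, a direct computation gives $f_p^{-} = -\lambda\,q|_\alpha + 1_{(\lambda+\mu)u + \lambda q|_{t(\alpha)} + (c-\mu u)}$, which is a rewriting step only when $\lambda + \mu = 0$. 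The case $\lambda + \mu \neq 0$---for instance $f_p = q|_\alpha + 1_u : 2u \to q|_{t(\alpha)} + u$---falls into neither branch of your dichotomy, so both your base case $p=1$ and your inductive step are incomplete.

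The missing case is easy to repair once seen: when $\lambda + \mu \neq 0$, set $c'' = c - \mu u$ and observe that $g_1 = (\lambda+\mu)\,q|_\alpha + 1_{c''}$ is a rewriting step from $a_{p-1}$, that $g_2 = \mu\,q|_\alpha + 1_{\lambda q|_{t(\alpha)} + c''}$ is a rewriting step from $a_p$ (left-monomiality of $X$ ensures $u\notin\operatorname{Supp}(\lambda q|_{t(\alpha)}+c'')$), and that $f_p \star_0 g_2 = g_1$ in $\lin{X}$; then proceed as in your ``$f_p$ is a rewriting step'' branch with $g_1$ in place of $f_p$, post-composing $g_2$ on the $a_p$ side. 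A secondary issue: your treatment of the possibility $a_{p-1} = a$ (``absorbed by truncation'') is not an argument. The lemma does not assume termination, so a positive cell $a \to a_{p-1}$ with $a_{p-1} = a$ need not be an identity, and $Y$-confluence at $a$ itself is not supplied by the hypothesis.
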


\begin{theorem}
\label{T:CriticalBranchingLemma}
Let~$X$ be a terminating \LO-$1$-polygraph, and~$Y$ be a cellular extension of~$\lin{X}$. If $X$ is critically $Y$-confluent, then $X$ is  $Y$-confluent.
\end{theorem}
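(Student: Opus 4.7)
The plan is to proceed by Noetherian induction on the rewriting order $\prec_X$, which is well-founded since $X$ terminates. Fixing a $0$-cell $a$ and assuming that $X$ is $Y$-confluent at every $b \prec_X a$, we want to establish $Y$-confluence at $a$. A general branching $(f, g)$ at $a$ decomposes into sequences of rewriting steps, and repeated application of Lemma~\ref{SSS:LemmaOfCriticalBranchings} allows one to glue $Y$-confluence witnesses of the resulting local branchings into a $Y$-confluence witness for $(f, g)$. This reduces the proof to establishing local $Y$-confluence of $X$ at every $0$-cell.

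For local $Y$-confluence at $a$, we invoke the classification in~\eqref{SSS:ClassificationOfLocalBranchings}. Aspherical branchings are trivially $Y$-confluent. Additive branchings $(\lambda f + \mu v + c, \lambda u + \mu g + c)$ apply rewriting steps to disjoint monomials, so the two steps commute and the identity $2$-cell witnesses confluence. For Peiffer branchings $(\lambda q|_{\lfloor fv \rfloor_\tau} + c, \lambda q|_{\lfloor ug \rfloor_\tau} + c)$, writing the targets $t(f)=a'$ and $t(g)=b'$ as linear combinations of monomials, one rewrites each monomial of $a'$ using $g$ on one side and each monomial of $b'$ using $f$ on the other, producing parallel positive $1$-cells both targeting $\lambda q|_{\lfloor a'b' \rfloor_\tau} + c$. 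Any additional reductions arising from interaction with the summand $c$ occur at $0$-cells strictly below $a$ in $\prec_X$, so the induction hypothesis together with Lemma~\ref{SSS:LemmaOfCriticalBranchings} produces the $2$-cell filler in $\langle X \rangle[Y]$.

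The overlapping case is the core of the argument. By the definition in~\eqref{SSS:ClassificationOfLocalBranchings}, every overlapping branching factors uniquely as $(q|_{f_0} + c, q|_{g_0} + c)$ for a critical branching $(f_0, g_0)$ and some $q \in Z^\Omega[\square]$. Critical $Y$-confluence of $X$ supplies positive $1$-cells $h_0, k_0$ and a $2$-cell $F_0$ in $\langle X \rangle[Y]$ witnessing confluence of $(f_0, g_0)$. Propagating $h_0$, $k_0$ and $F_0$ through the context $q|_{-}$ and the summand $c$ yields parallel positive $1$-cells and an intermediate $2$-cell between $q|_{t(f_0)} + c$ and $q|_{t(g_0)} + c$; all the ancillary $0$-cells encountered during this propagation lie strictly $\prec_X$-below $a$, so one further application of the induction hypothesis and Lemma~\ref{SSS:LemmaOfCriticalBranchings} pastes everything together into the desired $Y$-confluence diagram.

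The main obstacle will be in the overlapping case: one must carefully propagate the critical $2$-cell $F_0$ through both the operator-context $q|_{-}$ and the additive context $c$, and verify that every ancillary $0$-cell produced in this process is strictly $\prec_X$-below $a$ so that the induction hypothesis applies. The operated setting complicates this because $q$ may contain nested operators, so the interaction of $F_0$ with these operators must be tracked using the compatibility of the operators with the bimodule structure of $\langle X \rangle_1$. Once this bookkeeping is carried out, Lemma~\ref{SSS:LemmaOfCriticalBranchings} performs the global gluing and completes the induction.
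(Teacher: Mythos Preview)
Your overall strategy---Noetherian induction on $\prec_X$, reduction to local branchings via Lemma~\ref{SSS:LemmaOfCriticalBranchings}, and case analysis along the classification~\eqref{SSS:ClassificationOfLocalBranchings}---matches the paper's approach. However, two points need correction.

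First, your treatment of the additive case is wrong. The claim that ``the two steps commute and the identity $2$-cell witnesses confluence'' fails in the linear setting: after applying $f$ to reach $\lambda a + \mu v + c$, the $1$-cell $\lambda a + \mu g + c$ need not be a rewriting step, because monomials of $a$ may collide with $\mu v$ or with $\supp(c)$. This is precisely the linear subtlety that forces the use of Lemma~\ref{SSS:LemmaOfCriticalBranchings} and the induction hypothesis already in the additive case; the argument is not a one-liner.

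Second, you have the emphasis inverted. The overlapping case is \emph{not} where the operated structure creates new difficulty: once you factor as $(\lambda q|_{f_0}+c,\lambda q|_{g_0}+c)$ with $(f_0,g_0)$ critical, the positive $1$-cells $h_0,k_0$ and the $2$-cell $F_0$ propagate through $q|_{-}$ because contexts act linearly and preserve size, exactly as in the associative case of \cite{GuiraudHoffbeckMalbos19}. The genuinely new case in the operated setting is the \emph{Peiffer} branching $(\lambda q|_{\lfloor fv\rfloor_\tau}+c,\,\lambda q|_{\lfloor ug\rfloor_\tau}+c)$: here the ``obvious'' completions $\lambda q|_{\lfloor ag\rfloor_\tau}+c$ and $\lambda q|_{\lfloor fb\rfloor_\tau}+c$ may fail to be positive (again because of support interaction with $c$), and the paper's proof constructs the confluence diagram by first reaching $\lambda q|_{\lfloor ab\rfloor_\tau}+c$ via Lemma~\ref{SSS:LemmaOfCriticalBranchings}, then invoking the induction hypothesis on the branching $(h,k)$ based there. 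Your Peiffer paragraph gestures at this but does not isolate the non-positivity issue or the intermediate $0$-cell $\lambda q|_{\lfloor ab\rfloor_\tau}+c$; that is where the actual work lies.
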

\begin{proof}
We prove this result by considering the four cases of local branchings in \eqref{SSS:ClassificationOfLocalBranchings}. 
The proof of the confluence of these branchings uses the method from {\cite[Thm. 4.2.1]{GuiraudHoffbeckMalbos19}} for associative algebras, except for the Peiffer branchings, whose source is given by $\lambda \left.q\right|_{\left\lfloor uv\right\rfloor_{\tau}} + c$, as in~\eqref{SSS:ClassificationOfLocalBranchings}. 
We prove the confluence of such a branching by induction on rewriting order $\prec_X$ defined in \eqref{SSS:Termination}.
Given a reducible 0-cell $a$ of $\lin{X}$, we assume that $X$ is locally $Y$-confluent at every $0$-cell $b\prec_X a$.
By applying the coherent version of Newman's Lemma, as stated in {\cite[Pro. 4.1.3]{GuiraudHoffbeckMalbos19}}, which also holds in the operated setting, we conclude that $X$ is $Y$-confluent at every $b$.
We then have a coherently confluent diagram as follows
\[
\xymatrix @R=2.5em @C=1.5em {
& \lambda \left.q\right|_{\left\lfloor av\right\rfloor_{\tau}}+ c
\ar@/^/ [rr] ^-{f'_1}
\ar@{.>} [dr] |-*+{\lambda\left.q\right|_{\left\lfloor ag\right\rfloor_{\tau}}+ c}
&& a'
\ar@/^/ [drr] ^-{f'_2}
\ar@2 []!<2.5pt,-37.5pt>;[dd]!<2.5pt,37.5pt> ^-*+{H}
\\
\lambda \left.q\right|_{\left\lfloor uv\right\rfloor_{\tau}} + c
\ar@/^2ex/ [ur] ^-*+{\lambda\left.q\right|_{\brck{fv}{\tau}}+ c}
\ar@/_2ex/ [dr] _-*+{\lambda \left.q\right|_{\left\lfloor ug\right\rfloor_{\tau}} + c}
\ar@{} [rr] |(0.45){\sm =}
&& \lambda\left.q\right|_{\left\lfloor ab\right\rfloor_{\tau}} + c
\ar [ur] |-*+{h}
\ar [dr] |-*+{k}
\ar@2 [u]!<0pt,-7.5pt>;[]!<0pt,27.5pt> ^-*+{F^-}
\ar@2 []!<0pt,-27.5pt>;[d]!<0pt,7.5pt> ^-*+{G}
&&& d
\\
& \lambda \left.q\right|_{\left\lfloor ub\right\rfloor_{\tau}} + c
\ar@{.>} [ur] |-*+{\lambda \left.q\right|_{\left\lfloor fb\right\rfloor_{\tau}}+ c}
\ar@/_/ [rr] _-{g'_1}
&& b'
\ar@/_/ [urr] _-{g'_2}
}
\]
Note that the dotted 1-cells $\lambda \left.q\right|_{\left\lfloor ag\right\rfloor_{\tau}}+ c$ and $\lambda \left.q\right|_{\left\lfloor fb\right\rfloor_{\tau}}+ c$ may not be positive 1-cells if either $\operatorname{supp}(\left.q\right|_{\left\lfloor av\right\rfloor_{\tau}}) \cap\operatorname{supp}(c)$ or $\operatorname{supp}(\left.q\right|_{\left\lfloor ub\right\rfloor_{\tau}}) \cap \operatorname{supp}(c)$ is not empty. Following Lemma~\ref{SSS:LemmaOfCriticalBranchings}, we derive positive 1-cells $f_1', g_1', h, k$ and 2-cells $F$ and $G$. Since $\lambda \left.q\right|_{\left\lfloor ab\right\rfloor_{\tau}} + c\prec_X\lambda \left.q\right|_{\left\lfloor uv\right\rfloor_{\tau}} + c$, we further obtain positive 1-cells $f_2'$, $g_2'$, and a 2-cell $H$ that ensures $(h,k)$ is $Y$-confluent by hypothesis.
Finally, $X$ is $Y$-confluent by applying Newman's Lemma.
\end{proof}

As in the case of associative algebras {\cite[Thm. 3.4.2]{GuiraudHoffbeckMalbos19}}, convergent polygraphs provide canonical linear bases, as stated by the following result.

\begin{proposition}
\label{P:LinearBasisConvergentPolygraphs}
When $X$ is a convergent \LO-1-polygraph, the set $\Nf(X)$ forms a linear basis of the $\Omega$-algebra~$\overline{X}$.
\end{proposition}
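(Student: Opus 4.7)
The plan is to realize $\overline{X}$ as a $\bk$-vector space by decomposing $\lin{Z}$ as an internal direct sum of $\Omega$-monomial normal forms and the operated ideal $I_\Omega(X)$. Let me write $\Nf(X)$ for the set of irreducible (i.e.\ normal) $\Omega$-monomials of $Z^\Omega$, and let $\bk\Nf(X)$ denote its $\bk$-linear span inside $\lin{Z}$; by the classification of reducible 0-cells in Subsection~2.4, this span coincides with the set of 0-cells of $\lin{X}$ that are in normal form. The goal is then to prove that $\lin{Z} = \bk\Nf(X) \oplus I_\Omega(X)$, from which the composite $\bk\Nf(X) \hookrightarrow \lin{Z} \twoheadrightarrow \overline{X}$ is a $\bk$-linear isomorphism.

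For the sum decomposition, I first observe that any rewriting step $\lambda f + 1_c$, with $f = q|_\alpha$ of size $1$, transforms $\lambda q|_{s(\alpha)} + c$ into $\lambda q|_{t(\alpha)} + c$, and the difference $\lambda q|_{\partial(\alpha)}$ visibly lies in $I_\Omega(X)$. Composing rewriting steps, a positive $1$-cell $a \to a'$ gives $a - a' \in I_\Omega(X)$. By termination, every $0$-cell $a$ admits a positive $1$-cell to a normal form $\Nf(a, X) \in \bk\Nf(X)$, and by confluence this normal form is unique. Hence $a = \Nf(a, X) + \big(a - \Nf(a, X)\big)$ with $a - \Nf(a, X) \in I_\Omega(X)$, proving $\lin{Z} = \bk\Nf(X) + I_\Omega(X)$ and, in particular, that $\Nf(X)$ spans $\overline{X}$.

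For the directness of the sum, I would establish a Church–Rosser property in the operated setting: for any $a, b \in \lin{Z}$,
\[
a - b \in I_\Omega(X) \quad \Longleftrightarrow \quad \Nf(a, X) = \Nf(b, X).
\]
The reverse implication is immediate from the previous paragraph. For the forward implication, the hypothesis $a - b \in I_\Omega(X)$ rewrites as $a - b = \sum_i \mu_i q_i|_{\partial(\alpha_i)}$, so $a$ and $b$ are connected by a finite zigzag of elementary $\pm$-moves, each replacing a term $\mu_i q_i|_{s(\alpha_i)}$ by $\mu_i q_i|_{t(\alpha_i)}$ or conversely. I would argue by induction on the length of the zigzag, using convergence at each step: when the move is forward, the normal form is preserved; when it is backward, confluence together with the uniqueness of normal forms (provided by termination) forces the two normal forms to coincide, exactly as in the proof of the coherent Newman lemma already invoked in Theorem~2.4.6. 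Granting this equivalence, any $a \in \bk\Nf(X) \cap I_\Omega(X)$ satisfies $\Nf(a, X) = a$ since $a$ is already normal, and $\Nf(a, X) = \Nf(0, X) = 0$ since $a - 0 \in I_\Omega(X)$, whence $a = 0$.

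The main obstacle is the Church–Rosser implication in the \emph{linear} and \emph{operated} setting: one must be careful that a single term $q_i|_{\partial(\alpha_i)}$ appearing in the expression of $a - b$ does not in general correspond to a single rewriting step applied to $a$, because the corresponding monomial $q_i|_{s(\alpha_i)}$ need not appear in the support of the intermediate linear combination (it could cancel with other terms). Handling this requires either invoking the operated analogue of Lemma~3.2.5 to normalize intermediate sums, or, more directly, running the induction on the well-founded rewriting order $\prec_X$ of Subsection~3.1.3 rather than on the zigzag length, so that confluence is applied to genuine positive $1$-cells built from the rewriting steps available at each reducible summand.
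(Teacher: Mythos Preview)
The paper does not actually prove this proposition: it states the result and refers the reader to the associative case \cite[Thm.~3.4.2]{GuiraudHoffbeckMalbos19}, whose proof carries over verbatim once one replaces two-sided contexts $u\square v$ by operated contexts $q\in Z^\Omega[\square]$. Your proposal is precisely that standard argument, and your identification of the only genuine subtlety---that a summand $q_i|_{\partial(\alpha_i)}$ in an element of $I_\Omega(X)$ need not correspond to a rewriting step because $q_i|_{s(\alpha_i)}$ may be absent from the current support---is exactly the point handled in the cited reference via the well-founded induction on $\prec_X$ that you sketch at the end. So your proof is correct and is the same as the (implicit) one in the paper.
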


\subsubsection{Reduced convergent presentations}
\label{SSS:ReducedPolygraph}
An \LO-$1$-polygraph $X$ is \emph{left-reduced} if the only rewriting step in $X$ with source $s(\alpha)$ is $\alpha$ itself, for every $1$-generator~$\alpha$.
It is \emph{right-reduced} if, for every $1$-generator~$\alpha$, the 0-cell $t(\alpha)$ is a normal form.
The polygraph $X$ is \emph{reduced} if it is both left-reduced and right-reduced.
Following {\cite[Theorem~2.4]{Squier87}}, every (finite) convergent string rewriting is Tietze equivalent to a (finite) reduced convergent one. This result also holds for \LO-$1$-polygraphs.

\begin{example}
\label{E:ConvergentOfDifferentialAlgebra}
The free differential algebra $\DA{\lambda}{Z}$ is presented by the following \LO-$1$-polygraph
\begin{eqn}{equation}
\label{E:FreeDifferentialAlgebra}
\begin{aligned}
X^{D} \coloneq (Z,X_1^{D}),
\qquad
X_1^{D} \coloneq \big\{ \alpha[u,v]&: D(uv)\rightarrow D(u)v+u D(v)+\lambda D(u) D(v),\\
\varphi&:D(1)\fl0 \text{ } |\text{ }  u,v \in Z^\Omega\setminus \{1\} \big\}.
\end{aligned}
\end{eqn}
We set $N \coloneq \mathbb{Z}^3$ and define a derivation $d: Z^\Omega \to N$ by  
\[
d(u) = \bigg(\sum_{D|u} \max\{\operatorname{deg}_\Omega(D) + \operatorname{deg}_Z(D) - 1, 0\}, \operatorname{deg}_\Omega(u), \operatorname{deg}_Z(u) \bigg),
\]
for every $u \in Z^\Omega$, where $D|u$ denotes each occurrence of the operator $D$ in $u$. Here, $\operatorname{deg}_\Omega(D)$ and $\operatorname{deg}_Z(D)$ count the number of operators and $0$-generators inside the operator $D$, respectively, while $\operatorname{deg}_\Omega(u)$ and $\operatorname{deg}_Z(u)$ count the number of operators and $0$-generators in $u$.
For instance, we have $d(D(1))=(0,1,0)$, $d(D(xy))=(1,1,2)$, and $d(D(x)D(y))=(0,2,2)$ for $x,y\in Z$.
For $(m_1,m_2,m_3)\in N$ and $a\in \DA{\lambda}{Z}$, we define
\[
a\cdot(m_1,m_2,m_3)
=(m_1,m_2,m_3)\cdot a
=(m_1,m_2,m_3),
\quad
D\bigl((m_1,m_2,m_3)\bigr)
=(\operatorname{max}\{m_1+m_2+m_3-1,0\},m_2+1,m_3).
\]
By definition $d$ satisfies the conditions in \eqref{E:Derivation}. Next, we endow $d(Z^\Omega) \subseteq N$ with a monotone \emph{lexicographic order}$\prec$, comparing tuples $(m,n,l)\in d(Z^\Omega)$ lexicographically, where $d(1) = (0,0,0)$ is the minimal element. This ensures that $d(D(u)D(v))$, $d(D(u)v)$ and $d(uD(v))$ are all less than $d(D(uv))$ for any $u, v \in Z^\Omega\setminus \{1\}$, and $d(D(1)) \succ (0,0,0)$. Hence, $X^D$ is terminating.

If we write $\alpha[1,1] \coloneq \varphi $, then the polygraph $X^{D}$ has two families of critical branchings
\[
\xymatrix @R=0.5em@C=4em {
&a_1 \ar@/^/ [dr] 
\\
D(\left.q\right|_{D(uv)}w)
\ar@/^/ [ur] ^-{\alpha[\left.q\right|_{D(uv)},w]}
\ar@/_/ [dr] _-{D(\left.q\right|_{\alpha[u,v]}w)}
&& a_3
\\
& a_2 \ar@/_/ [ur] 
}
\qquad\qquad\qquad
\xymatrix @R=0.5em@C=4em {
&b_1 \ar@/^/ [dr] 
\\
D(w\left.q\right|_{D(uv)})
\ar@/^/ [ur] ^-{\alpha[w,\left.q\right|_{D(uv)}]}
\ar@/_/ [dr] _-{D(w\left.q\right|_{\alpha[u,v]})}
&& b_3
\\
&b_2 \ar@/_/ [ur] 
}
\]
indexed by $u,v,w\in Z^\Omega$ with $w\neq 1$, both of which are confluent. Here, we have
\[
\begin{aligned}
a_1 &= D(\left.q\right|_{D(uv)})w + \left.q\right|_{D(uv)} D(w) + \lambda D(\left.q\right|_{D(uv)}) D(w), \\
a_2 &= D(\left.q\right|_{D(u)v}w) + D(\left.q\right|_{uD(v)}w) + \lambda D(\left.q\right|_{D(u)D(v)}w), \\
a_3 &= D(\left.q\right|_{D(u)v})w + D(\left.q\right|_{uD(v)})w + \lambda D(\left.q\right|_{D(u)D(v)})w \\
&\quad + \left.q\right|_{D(u)v} D(w) + \left.q\right|_{uD(v)} D(w) + \lambda \left.q\right|_{D(u)D(v)} D(w) \\
&\quad + \lambda D(\left.q\right|_{D(u)v}) D(w) + \lambda D(\left.q\right|_{uD(v)}) D(w) + \lambda^2 D(\left.q\right|_{D(u)D(v)}) D(w),
\end{aligned}
\]
and $b_1, b_2$ and $b_3$ can be similarly written. Thus $X^D$ is convergent.
Let $D^\theta(Z) \coloneq \{ D^i(x) \mid x \in Z, i \geq 0 \}$, where $D^0(x) \coloneq x$. The set
\[
\Nf(X^D)=\left(D^\theta(Z)\right)^\ast
\]
forms a linear basis of $\DA{\lambda}{Z}$.
Note that, since $X^D$ contains inclusion branchings, it is not reduced. We will construct a reduced presentation of $\DA{\lambda}{Z}$ in Subsection~\ref{SS:ResolutionsOfFreeDifferentialAlgebras}.
\end{example}

\subsection{Gröbner-Shirshov bases and convergence}
\label{SS:GSConvergence}
In this subsection, we establish a relationship between Gröbner–Shirshov bases of operated algebras \cite{BokutChenQiu,GGR15,QQWZ21} and convergent \LO-1-polygraphs.

\subsubsection{Gröbner-Shirshov bases}
Let $X$ be an \LO-1-polygraph and $S$ be a nonzero subset of $\lin{Z}$.
For the critical branchings in \eqref{E:OperatedCriticalBranchings}, we set $r = uvw$ (resp. $r = \left.q\right|_{v'}$) and define the $0$-cells
\begin{eqn}{equation}
\label{EQ:TwoCompositions} 
(a,b)_r \: \coloneq \: aw - ub \quad \text{\big(resp. $(a',b')_r \: \coloneq \: \left.q\right|_{a'} - b'$\big)}. 
\end{eqn}
Given a monomial order $\prec$ on $Z^\Omega$ compatible with $X_1$ and a nonzero $0$-cell $c$ in $\lin{Z}$, we denote by $\lm_\prec(c)$ the maximal $\Omega$-monomial in $\operatorname{Supp}(c)$.
The cell $c$ is \emph{trivial modulo $(S,r)$} 
if there is a decomposition
\[
c=\Sigma_i \lambda_i \left.q_i\right|_{s_i} \text{ with } \left.q_i\right|_{\lm_\prec(s_i)}\prec r,
\]
where $\lambda_i \in \bk$, $q_i \in Z^\Omega[\square]$, and $s_i \in S$.

A nonzero subset $S$ is a \emph{Gröbner-Shirshov (GS) basis of $\lin{Z}$ with respect to $\prec$} if, for all critical branchings \eqref{E:OperatedCriticalBranchings}, the $0$-cells in \eqref{EQ:TwoCompositions} are trivial modulo $(S, r)$.

\begin{proposition}
\label{P:ConvergenceGrobnerShirshov}
Let $X$ be an \LO-$1$-polygraph.
If the set $\partial(X_1)$ forms a GS basis~of $\lin{Z}$ with respect to a monomial order $\prec$ compatible with $X_1$, then the polygraph $X$ is convergent.
\end{proposition}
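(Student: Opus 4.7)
The plan is to establish convergence via termination and critical confluence. Termination is immediate from Subsection~\ref{SSS:Termination}, since $\prec$ is a monomial order compatible with $X_1$. By Theorem~\ref{T:CriticalBranchingLemma} applied with $Y$ equal to the set of all $1$-spheres of $\lin{X}$, it remains to prove that $X$ is critically $Y$-confluent, which I would do by well-founded induction on $\prec$ applied to the source of the critical branching.

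Fix a critical branching $(f, g)$ of $X$ with source $r$, of either shape in~\eqref{E:OperatedCriticalBranchings}, and let $t(f), t(g)$ be its targets. The difference $t(f) - t(g)$ equals the S-polynomial $(a, b)_r$ or $(a', b')_r$ from~\eqref{EQ:TwoCompositions}, and the Gröbner--Shirshov hypothesis provides a decomposition
\[
t(f) - t(g) \: = \: \sum_i \lambda_i \left.q_i\right|_{\partial(\beta_i)},
\qquad
\beta_i \in X_1,\; q_i \in Z^\Omega[\square],\; \left.q_i\right|_{s(\beta_i)} \prec r.
\]
Each summand $\left.q_i\right|_{\partial(\beta_i)} = \left.q_i\right|_{s(\beta_i)} - \left.q_i\right|_{t(\beta_i)}$ is the boundary of a rewriting step $\left.q_i\right|_{\beta_i}$ of $\lin{X}$ whose source strictly precedes $r$. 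Since $\prec$ is compatible with $X_1$, the supports of $t(f)$, $t(g)$, and of each $\left.q_i\right|_{t(\beta_i)}$ also consist of monomials $\prec r$, so the inductive hypothesis (combined with Lemma~\ref{SSS:LemmaOfCriticalBranchings}) yields $Y$-confluence of $X$ at every $0$-cell encountered.

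It remains to convert this algebraic identity into an actual confluence diagram for $(f, g)$. I would read the decomposition as a telescoping zig-zag of rewriting steps, traversed forward or in reverse, joining $t(f)$ to $t(g)$ through $0$-cells whose supports lie strictly below $r$, and then iteratively straighten this zig-zag using the confluence supplied by the inductive hypothesis, producing positive $1$-cells from $t(f)$ and $t(g)$ to a common target. The main obstacle, and the combinatorial heart of the proof, is precisely this straightening: one must order the summands so that the leading $\Omega$-monomials $\left.q_i\right|_{s(\beta_i)}$ are treated in decreasing order under $\prec$, absorb the intermediate reductions in a coherent fashion via repeated applications of Lemma~\ref{SSS:LemmaOfCriticalBranchings}, and track the operated contexts $q_i$ correctly through nested operators---the operated counterpart of the classical bookkeeping that turns a Gröbner--Shirshov identity into a confluence diagram in associative rewriting. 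Once critical $Y$-confluence is established at every source, Theorem~\ref{T:CriticalBranchingLemma} upgrades it to $Y$-confluence of $X$, and together with termination this yields the convergence of $X$.
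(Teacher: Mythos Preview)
Your approach and the paper's share the same skeleton: termination from the monomial order, then critical confluence from the GS decomposition, then Theorem~\ref{T:CriticalBranchingLemma}. The paper handles the middle step far more briefly: it observes that each $\partial(\alpha_i)$ rewrites to $0$ in one step and then ``applies the normal form to both sides'' of the decomposition $aw - ub = \sum_i \lambda_i \left.q_i\right|_{\partial(\alpha_i)}$ to conclude $\rep{aw} = \rep{ub}$. This treats $\rep{-}$ as linear, which strictly speaking presupposes confluence (equivalently, the Composition--Diamond lemma from the operated GS literature the paper cites); your zig-zag straightening is the honest unpacking of exactly that step. In that sense your route is the more rigorous one, and the paper's is a standard shortcut.

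There is, however, a gap in your induction setup. You induct on the source $r$ of the critical branching, with hypothesis that all critical branchings of smaller source are confluent. But to straighten the zig-zag via Lemma~\ref{SSS:LemmaOfCriticalBranchings}, you need full $Y$-confluence at the intermediate $0$-cells encountered---and these are arbitrary polynomials with support $\prec r$, not themselves sources of critical branchings. Your hypothesis supplies only critical confluence below $r$, and Lemma~\ref{SSS:LemmaOfCriticalBranchings} \emph{assumes} full confluence below, it does not produce it. The fix is to strengthen the inductive hypothesis to ``$X$ is confluent at every $0$-cell whose support lies strictly below $r$'' (inducting over all $0$-cells via the well-founded extension of $\prec$ to supports, and folding Newman's lemma into each step), or equivalently to re-run the proof of Theorem~\ref{T:CriticalBranchingLemma} inside the induction rather than invoking it as a black box at the end.
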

\begin{proof}
The termination of $X$ follows from the compatibility of the rewriting rules with the monomial order $\prec$. We consider every intersection branching $(fw,ug)$ in \eqref{E:OperatedCriticalBranchings}. Since $\partial(X_1)$ forms a GS basis of $\lin{Z}$ with respect to $\prec$, there is a decomposition
\begin{eqn}{equation}
\label{E:GSDecomposition}
aw-ub=\Sigma_i \lambda_i \left.q_i\right|_{\partial(\alpha_i)}, 
\end{eqn}
where $\alpha_i\in X_1$ and $\left.q_i\right|_{s(\alpha_i)}\prec uvw$.
Since $\partial(\alpha_i)$ and $0$ have the same image in $\lin{Z}$, we have $\rep{\partial(\alpha_i)} = 0$.
Applying the normal form to both sides of \eqref{E:GSDecomposition}, we obtain $\rep{aw}=\rep{ub}$. Hence every critical branching 
$(fw,ug)$ is confluent. 
We apply the same reasoning to the inclusion branchings.
Taking the cellular extension $Y$ containing all 1-spheres of $\lin{X}$, the confluence of $X$ is deduced from Theorem~\ref{T:CriticalBranchingLemma}. 
\end{proof}

\begin{remark}
\label{E:NoMonomialOrder}
The converse of Proposition~\ref{P:ConvergenceGrobnerShirshov} does not hold in general. Indeed, an \mbox{$\Omega$-1-polygraph} can be terminating without admitting a compatible monomial order. 
For example, the \LO-1-polygraph
\[
X \:=\: \big\{x, y, z \mid x\brca{y}z \fl \brca{x}y\brca{z} + \brca{x}\brca{y}z + x\brca{y}\brca{z}\big\}
\]
is terminating since, for every $q \in Z^\Omega[\square]$, the $\Omega$-monomial $\left.q\right|_{x\brca{y}z}$ contains one more factor $x\brca{y}z$ than $\left.q\right|_{\brca{x}y\brca{z}}$, $\left.q\right|_{\brca{x}\brca{y}z}$, or $\left.q\right|_{x\brca{y}\brca{z}}$.
However, there does not exist a monomial order $\prec$ compatible with $X_1$. Such an order $\prec$  would imply $x\brca{y}z \succ \brca{x}\brca{y}z$ and $x\brca{y}z \succ x\brca{y}\brca{z}$, leading to $x \succ \brca{x}$, $z \succ \brca{z}$, and thus to
\[
x \succ \brca{x} \succ \brca{\brca{x}} \succ \ldots  \quad \text{and} \quad z \succ \brca{z} \succ \brca{\brca{z}} \succ \ldots,
\]
contradicting that $\prec$ is a well-founded total order.
\end{remark}

\subsubsection{Completion procedure}
\label{SSS:CompletionProcedure}
When $X$ is a non-confluent terminating \LO-1-polygraph, we can complete the set of $1$-generators of $X$, without changing the presented algebra, in order to reach confluence.
This \emph{completion procedure} is well known in rewriting theory, see 
\cite{Buchberger65} for commutative algebras and \cite{KnuthBendix70} for term rewriting. 
Starting with a monomial order $\prec$ compatible with $X_1$, the procedure examines each non-confluent critical branching $(f, g)$ in $X$, and reduces $t(f)$ and $t(g)$ to some normal forms~$\rep{t(f)}$ and~$\rep{t(g)}$. 
A new $1$-generator $\lm_\prec(a) \to \lambda^{-1}a - \lm_\prec(a)$ is then added to the polygraph. 
When it terminates, the procedure produces a terminating \LO-1-polygraph $Y$ such that $\overline{X} \cong \overline{Y}$.

\section{Polyautomata and operated polygraphs}
\label{S:Polyautomata}
In this section, we introduce the structure of polyautomata to encode the operator structure of \LO-1-polygraphs. We interpret their critical branchings in terms of string overlaps in Theorem~\ref{T:OmegaCBshape} and establish a categorical equivalence between \LO-1-polygraphs and linear 1-polygraphs in Theorem~\ref{T:MainResultSection4}.

\subsection{Pushdown automata}
The automaton structure is a model of computation based on the notion of state.
Among these, \emph{pushdown automata (PDA)} use a last-in-first-out stack to process \emph{context-free languages}. Their state transitions depend on both the input symbol and the top of the stack, enabling them to handle nested structures such as operators.

\subsubsection{}
\label{SSS:PDA}
Recall that a \emph{pushdown automaton (PDA) on $\Sigma_0$} is a tuple $\mathbb{A}=(Q, \Sigma_0, \Gamma, \delta, q_0, F)$, where:
\begin{enumerate}
    \item $Q$ is a finite set of internal states,
    \item $\Sigma_0$ is the input alphabet,
    \item $\Gamma$ is a finite set of symbols called the \emph{stack alphabet},
    \item $\delta: Q \times \{\Sigma_0\cup \epsilon\} \times \{\Gamma\cup \epsilon\cup \$\} \to Q \times \Gamma^*$ is the state transition function, where $\epsilon$ is an empty string. In a given state, the PDA reads both the input symbol and the top symbol of the stack, then transitions to a new state and updates the stack top,
    \item $q_0 \in Q$ is the initial state,
    \item $F \subseteq Q$ is the set of accepting states.
\end{enumerate}

A \emph{monomial accepted by $\Ar$} is a word $ w = a_1 \cdots a_n$ in $\Sigma_0^* $ such that there exists a finite sequence of valid transitions
\[
(q_0, \$)
\xrightarrow[\;a_1\;]{} (q_1, \Gamma_1)
\xrightarrow[\;a_2\;]{} \cdots 
\xrightarrow[\;a_n\;]{} (q_n, \Gamma_n),
\]
with $q_n \in F$.
We denote by \emph{$\Sigma_0^{\mathbb{A}}$} the set of monomials accepted by $\Ar$. Note that~$\Sigma_0^{\mathbb{A}}$ does not form a monoid in general. 
We further denote by $\bk \Sigma_0^\Ar$ the set of \emph{polynomials accepted by $\Ar$}, consisting of linear combinations of monomials in $\Sigma_0^\Ar$.

\subsubsection{Examples}
A PDA is \emph{trivial} if it accepts all monomials in $\Sigma_0^\ast$. 
It can be pictured as follows
\[
\begin{tikzpicture}[shorten >=1pt, node distance=2.8cm, on grid, auto]
   \node[state, initial] (q_0)   {$q_0$}; 
   \node[state] (q_1) [right=of q_0] {$q_1$}; 
   \node[state, accepting] (q_2) [right=of q_1] {$q_2$};

    \path[->] 
    (q_0) edge[below]              node {$\epsilon, \epsilon \fl \$ $} (q_1)
    (q_1) edge [loop above] node {$\Sigma_0, \epsilon \fl \epsilon $} ()
          edge              node[below] {$\epsilon, \$ \fl \epsilon $} (q_2);
\end{tikzpicture}
\]
where $\Sigma_0, \epsilon \fl \epsilon$ denotes the set of instruction $x, \epsilon \fl \epsilon$ for every $x\in \Sigma_0$. 
The accepting states $q_2$ is represented by a double circle.

As a nontrivial example, consider the PDA $\Ar=(\{q_0,q_1,q_2\},\{a,b\},\{\$,0\},\delta,q_0,q_2)$, with
\[
    \delta(q_0, \epsilon, \epsilon)  = (q_1, \$), \quad  \delta(q_1, a, \epsilon)  = (q_1, 0),  \quad \delta(q_1, b, 0)  = (q_1, \epsilon),  \quad
\delta(q_1,  \epsilon,\$)  = (q_2, \epsilon).
\]
Its transition diagram is given below
\[
\begin{tikzpicture}[shorten >=1pt, node distance=2.7cm, on grid, auto]
   \node[state, initial] (q_0)   {$q_0$}; 
   \node[state] (q_1) [right=of q_0] {$q_1$}; 
   \node[state, accepting] (q_2) [right=of q_1] {$q_2$};

    \path[->] 
    (q_0) edge[below]              node {$\epsilon, \epsilon \fl \$ $} (q_1)
    (q_1) edge [loop above] node {$a, \epsilon \fl 0 $; $b, 0 \fl \epsilon $} ()
          edge              node[below] {$\epsilon, \$ \fl \epsilon $} (q_2);
\end{tikzpicture}
\]
This PDA accepts monomials of the form $a^n b^n$ for $n \geq 0$. The key mechanism lies in its stack operations: each $a$ pushes a $0$ onto the stack, while each $b$ pops a $0$, ensuring a balanced number of $a$'s and $b$'s. For instance, For instance, when processing the word $aabb$, the automaton follows these transitions:
\[
\begin{aligned}
    \text{Input: } & aabb, \quad \text{Stack: } \epsilon\xrightarrow{\epsilon} \$ \xrightarrow{a} \$0 \xrightarrow{a} \$00 \xrightarrow{b} \$0 \xrightarrow{b} \$ \xrightarrow{\epsilon}\epsilon.
\end{aligned}
\]
The PDA reaches the accepting state $q_2$ once the stack is emptied and all transitions are completed.

\subsection{Polyautomatic formulation of rewriting in operated algebras}
\label{SS:PolyautomaticFormulation}
This subsection introduces the notion of polyautomata. 
We show how to make explicit the structure of an $\Omega$-algebra by a polyautomaton. 
We deduce an equivalence between the categories of $\Pol(\OAlg{})$ and $\Pol(\Alg)$.

\subsubsection{Polyautomata}
As mentioned in Remark~\ref{R:OmegaEmpty}, a linear $1$-polygraph $\Sigma=(\Sigma_0,\Sigma_1)$, as introduced in \cite{GuiraudHoffbeckMalbos19}, is an \LO-1-polygraph with an empty set $\Omega$.
A \emph{$1$-polyautomaton} is a pair $(\Sigma,\mathbb{A})$ consisting of a linear $1$-polygraph $\Sigma$ and a PDA $\Ar$ on $\Sigma_0$, such that for every $\alpha\in \Sigma_1$, both the source $s(\alpha)$ and the target~$t(\alpha)$ are polynomials accepted by $\Ar$.
We denote by $\Pol_1(\Ar)$ the full subcategory of $\Pol_1(\Alg)$  consisting of $1$-polyautomata on $\Ar$. When $\Ar$ is trivial, these two categories coincide. 

\subsubsection{Bracket polyautomaton}
\label{SSS:OperatedRewriting}
The \emph{bracket $1$-polyautomaton} is a data $(\Sigma,\Ar_\Omega)$ made of
\begin{enumerate}
\item  $\Sigma_0\coloneq Z\sqcup \mathrm{Brck}(\Omega)$, where the  \emph{bracket set}  $\mathrm{Brck}(\Omega)$ is defined as follows
\[
\mathrm{Brck}(\Omega)\coloneq\bigcup_{\tau_i\in\Omega}\{\mathsf{\ell}_{\tau_i},\mathsf{r}_{\tau_i}\}.
\]
For simplicity, we will write $\mathsf{\ell}_{\tau_i}$ and $\mathsf{r}_{\tau_i}$ as $\mathsf{\ell}_i$ and $\mathsf{r}_i$ when there is no ambiguity.
\item The PDA $\Ar_\Omega$ is illustrated by the following state transition diagram
\begin{eqn}{equation}
\label{EQ:PolyautomataForOmegaMonoids}
\raisebox{-1.5cm}{
\begin{tikzpicture}[shorten >=1pt, node distance=2.8cm, on grid, auto]
   \node[state, initial] (q_0)   {$q_0$}; 
   \node[state] (q_1) [right=of q_0] {$q_1$}; 
   \node[state, accepting] (q_2) [right=of q_1] {$q_2$};

    \path[->] 
    (q_0) edge[below]              node {$\epsilon, \epsilon \fl \$ $} (q_1)
    (q_1) edge [loop above] node {$\mathsf{\ell}_i, \epsilon \fl i; \;\mathsf{r}_j, j \fl \epsilon $} ()
          edge [loop below] node {$Z, \epsilon \fl \epsilon $} ()
          edge              node[below] {$\epsilon, \$ \fl \epsilon $} (q_2);
\end{tikzpicture}
}
\end{eqn}
\end{enumerate}

\begin{lemma}
\label{L:OmegaIsomorphic}
The set $\Sigma_0^{\Ar_\Omega}$ with concatenation operation forms a monoid, isomorphic  to the free $\Omega$-monoid $Z^\Omega$. Moreover, this  induces an isomorphism $\psi$ of algebras between $\lin{Z}$ and $\bk\Sigma_0^{\Ar_{\Omega}}$.
\end{lemma}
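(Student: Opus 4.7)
\medskip

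The plan is to exhibit an explicit bijection $\psi : Z^\Omega \to \Sigma_0^{\Ar_\Omega}$ and check that it is a monoid morphism; once this is established, linear extension immediately upgrades it to an isomorphism of algebras $\lin{Z} \to \bk\Sigma_0^{\Ar_\Omega}$.

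First, I would show that $\Sigma_0^{\Ar_\Omega}$ is closed under concatenation, so it is a submonoid of the free monoid on $\Sigma_0$. Indeed, if $w_1$ and $w_2$ are accepted then each admits a computation bringing the stack back to $\$$, i.e.\ to the configuration at $q_1$ with empty stack content; concatenating the two computations (skipping the intermediate $\epsilon$-moves into and out of $q_2$) yields an accepting computation for $w_1 w_2$. The empty word is accepted, giving the identity.

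Next I define $\psi$ inductively, following the construction of $Z^\Omega$ in \eqref{SSS:FreeOmegaMonoid}. Set $\psi(1) \coloneq \epsilon$ and $\psi(z) \coloneq z$ for $z \in Z$. Having defined $\psi$ on $Z^\Omega_{n-1}$, extend it to $Z^\Omega_n$ by
\[
\psi(\lfloor u \rfloor_\tau) \coloneq \mathsf{\ell}_\tau\, \psi(u)\, \mathsf{r}_\tau \quad \text{for } u \in Z^\Omega_{n-1},\ \tau\in\Omega,
\]
and extend multiplicatively on the free monoid generated by $Z \sqcup \lfloor Z^\Omega_{n-1}\rfloor_\Omega$. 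By construction $\psi$ is a monoid morphism, and a direct stack-discipline check shows that $\psi(u)$ is indeed accepted by $\Ar_\Omega$: reading $\psi(\lfloor u \rfloor_\tau)$ pushes $\tau$ onto the stack, then by induction returns to the pre-existing stack content, and finally pops $\tau$ with the terminal $\mathsf{r}_\tau$; letters from $Z$ and concatenations leave the stack unchanged.

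For bijectivity I would argue as follows. For injectivity, suppose $\psi(u) = \psi(v)$ for $\Omega$-monomials $u,v$; proceed by induction on $\max(\operatorname{dep}(u),\operatorname{dep}(v))$ and, at each fixed depth, by breadth. The first letter of $\psi(u)$ distinguishes whether $u$ begins with a factor in $Z$ or in $\lfloor Z^\Omega \rfloor_\Omega$, and in the latter case its index $\tau$ is read off, so the leading factor of $u$ is determined. Surjectivity is the main obstacle and uses the stack discipline of $\Ar_\Omega$ essentially: given $w \in \Sigma_0^{\Ar_\Omega}$, I claim $w$ admits a unique factorisation $w = x_1 x_2 \cdots x_n$ with each $x_i \in Z \sqcup \mathsf{\ell}_\Omega\, \Sigma_0^{\Ar_\Omega}\, \mathsf{r}_\Omega$ (where each bracketed factor has matching $\mathsf{\ell}_\tau, \mathsf{r}_\tau$). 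To see this, track the stack height during an accepting run; the positions where the stack returns to its initial height mark the unique cut points, and a leading $\mathsf{\ell}_\tau$ must be matched by the first subsequent $\mathsf{r}_\tau$ that brings the stack back down to that height, with the enclosed word itself in $\Sigma_0^{\Ar_\Omega}$. Inducting on $|w|$, each bracketed factor is $\psi(\lfloor u_i \rfloor_{\tau_i})$ for a unique $u_i \in Z^\Omega$, and $w = \psi(x_1 \cdots x_n)$.

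Finally, since $\psi$ is a bijective monoid morphism and both $\lin{Z}$ and $\bk\Sigma_0^{\Ar_\Omega}$ are free $\bk$-modules on $Z^\Omega$ and $\Sigma_0^{\Ar_\Omega}$ respectively, the $\bk$-linear extension of $\psi$ is an isomorphism of algebras, which I also denote $\psi$. The hard point is the unique-parsing lemma for surjectivity; everything else is bookkeeping along the inductive construction of $Z^\Omega$.
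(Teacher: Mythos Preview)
Your proof is correct and follows essentially the same approach as the paper: both construct the obvious bijection that matches the bracket $\lfloor\cdot\rfloor_\tau$ with the pair $\mathsf{\ell}_\tau\cdots\mathsf{r}_\tau$, and then extend linearly. The paper defines $\psi$ in the opposite direction (from $\Sigma_0^{\Ar_\Omega}$ to $Z^\Omega$) and argues surjectivity by tracing how $\Ar_\Omega$ accepts the successive layers $Z^\Omega_k$, declaring injectivity and multiplicativity ``straightforward''; your stack-height cut-point argument for unique parsing and your explicit closure-under-concatenation step are more detailed than what the paper writes, but the underlying idea is the same.
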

\begin{proof}
First, we define a map $\psi: \Sigma_0^{\Ar_\Omega} \rightarrow Z^\Omega$ as follows. For any $u \in \Sigma_0^{\Ar_\Omega}$ and $\tau_i \in \Omega$, $\psi(u)$ is obtained by replacing each $\mathsf{\ell}_i$ with the left bracket "$\lfloor$" and each $\mathsf{r}_i$ with the right bracket "$\rfloor_{\tau_i}$" of the bracket $\brck{~}{\tau_i}$. For example,
\[
\psi(\mathsf{\ell}_1 x \mathsf{r}_1 \mathsf{\ell}_2 y \mathsf{r}_2) = \brck{x}{\tau_1}\brck{y}{\tau_2}.
\]
In particular, we set $\psi(\mathsf{\ell}_i \mathsf{r}_i) = \brck{1}{\tau_i}$ and $\psi(\epsilon) = 1$.

The map $\psi$ is surjective. Indeed, as shown in \eqref{EQ:PolyautomataForOmegaMonoids}, state $q_0$ transitions to $q_1$ by reading the empty string $\epsilon$, initializing the stack with $\$$. To reach $q_2$ from $q_1$, the stack must remain unchanged as $\$$.
In particular, a direct transition from $q_1$ to $q_2$ without additional instructions results in an output of~$\epsilon$.
At~$q_1$, by repeatedly reading instructions of the form $x, \epsilon \mapsto \epsilon$ for all $x \in Z$, $\Ar_\Omega$ can output any monomial in $Z^\ast$ while keeping the stack unchanged as $\$$, and then transition to $q_2$ to stop the process. Thus, we have $\psi(Z^\ast) = Z_0^\Omega$, where $Z_0^\Omega$ is defined in \eqref{SSS:FreeOmegaMonoid}.
Alternatively, at $q_1$, $\Ar_\Omega$ may first read the instruction $\mathsf{\ell}_i, \epsilon \mapsto i$, output $\mathsf{\ell}_i$, and push $i$ onto the stack. Since the presence of $i$ in the stack does not interfere with the instructions $x, \epsilon \mapsto \epsilon$, $\Ar_\Omega$ can continue to output any monomial in $Z^\ast$. Finally, by reading $\mathsf{r}_i, i \mapsto \epsilon$, $\Ar_\Omega$ outputs a monomial of the form $\mathsf{\ell}_i Z^\ast \mathsf{r}_i$, remove $i$ from the stack, and reach $q_2$. 
Similarly, $\Ar_\Omega$ can accept monomials of the form
\[
(Z \sqcup \mathsf{\ell}_{i_n} \cdots \mathsf{\ell}_{i_1} Z^\ast \mathsf{r}_{i_1} \cdots \mathsf{r}_{i_n})^\ast
\]
Thus, we have $\psi\left((Z \sqcup \mathsf{\ell}_{i_n} \cdots \mathsf{\ell}_{i_1} Z^\ast \mathsf{r}_{i_1} \cdots \mathsf{r}_{i_n})^\ast\right) = Z_1^\Omega$. By similar reasoning, for any subset $Z_k^\Omega \subseteq Z^\Omega$, we can construct its preimage under $\psi$.
Since $Z^\Omega$ is defined by $Z^\Omega \coloneq \varinjlim Z_n^\Omega$, we conclude that $\psi$ is surjective. 

The injectivity of $\psi$ and its compatibility with products are straightforward. It follows that \linebreak $\psi : \Sigma_0^{\Ar_\Omega} \to Z^\Omega$  is a monoid isomorphism, that we extend by linearity into an isomorphism $\psi : \bk\Sigma_0^{\Ar_\Omega} \to \lin{Z}$ of algebras.
\end{proof}

\subsubsection{Operated rewriting system}
\label{SSS:PolygraphVsPolyautomaton}
We define a functor
\[
\Sigma(-) \: :\: \Pol_1(\OAlg{}) \longrightarrow \Pol_1(\Ar_\Omega),
\]
which maps an \LO-$1$-polygraph $X$ to the $1$-polyautomaton $\Sigma(X) = (\Sigma_0, \Sigma_1, \Ar_\Omega)$, where $\Sigma_0 := Z \sqcup \mathrm{Brck}(\Omega)$ and
\[
\Sigma_1 \: \coloneq \: \{\psi^{-1}(\alpha): \psi^{-1}(a) \fl\psi^{-1}(b), \text{ for all } \alpha: a \fl b \in X_1 \},
\]
with $\psi$ defined in Lemma~\ref{L:OmegaIsomorphic}. Since $\psi^{-1}(a), \psi^{-1}(b) \in \bk \Sigma_0^{\Ar_\Omega}$, the cellular extension $\Sigma_1$ is well-defined.
This functor establishes a one-to-one correspondence between $\Omega$-$1$-polygraphs and  $1$-polyautomata. Moreover, we have the algebraic isomorphism
\[
\bk\Sigma_0^{ \Ar_\Omega} / \Sigma_1 \cong \overline{X}.
\]
The 1-polyautomaton $(\Sigma_0,\Sigma_1,\Ar_\Omega)$ is called an \emph{operated rewriting system} or an \emph{$\Omega$-rewriting system}.

\subsubsection{Example}
\label{D:DiffAutomaton}
According to \eqref{E:ConvergentOfDifferentialAlgebra} and that $\left(D^\theta(Z)\right)^\ast$ is a linear basis of the free differential algebra~$\DA{\lambda}{Z}$, the following PDA, denoted by $\Ar^{\scriptscriptstyle D}$, accepts the normal forms of $\DA{\lambda}{Z}$
\begin{eqn}{equation}
\label{D:PDAOfDiff}
\raisebox{-1.6cm}{
\begin{tikzpicture}[shorten >=1pt, node distance=2.8cm, on grid, auto]
    \node[state, initial] (q0) {$q_0$};
    \node[state] (q1) [right=of q0] {$q_1$};
    \node[state] (q2) [below left=of q1] {$q_2$};
    \node[state] (q3) [below right=of q1] {$q_3$};
    \node[state, accepting] (q5) [right=of q1] {$q_5$};
    \path[->]  (q0) edge node [above] {$\varepsilon, \varepsilon \to \$$} (q1);
    \path[->] (q1) edge [loop above] node {$Z, \varepsilon \to \varepsilon$} (q1);
    \path[->] (q1) edge node [above] {$\varepsilon, \$ \to \varepsilon$} (q5);
    \path[->] (q1) edge node [left] {$\varepsilon, \varepsilon \to \varepsilon$} (q2);
    \path[->] (q3) edge node [right] {$\varepsilon, \varepsilon \to \varepsilon$} (q1);
    \path[->] (q2) edge [loop left] node {$\mathsf{\ell}_{\scriptscriptstyle D}
, \varepsilon \to D$} (q2);
    \path[->] (q3) edge [loop right] node {$\mathsf{r}_{\scriptscriptstyle D}
, D \to \varepsilon$} (q3);
    \path[->] (q2) edge node [above] {$Z, \varepsilon \to \varepsilon$} (q3);
\end{tikzpicture}
}
\end{eqn} 
A differential algebra $A$ can thus be presented by a $1$-polyautomaton $(\Sigma, \Ar^{\scriptscriptstyle D})$, where $\Sigma_0 \coloneq Z \sqcup \mathrm{Brck}(\Omega)$ and $\Sigma_1$ is its set of defining relations. 
The algebra $A$ is thus isomorphic to the quotient algebra~$\bk\Sigma_0^{\Ar^{\scriptscriptstyle D}} / \Sigma_1$. 
For instance, for commutative differential algebras as in \cite{ChenChenLi}, we set $\Sigma_1 \coloneq \{uv \to vu \mid u,v \in \Sigma_0^{\Ar^{\scriptscriptstyle D}}\}$.

\begin{theorem}
\label{T:MainResultSection4}
The categories $\OAlg{}$ and $\Alg$ (resp. $\Pol_1(\OAlg{})$ and $\Pol_1(\Alg)$) are equivalent.
\end{theorem}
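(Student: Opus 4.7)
The plan is to construct explicit quasi-inverse functors, starting at the polygraph level and then descending to the algebra level via presentations. The functor $\Sigma(-)$ from $\Pol_1(\OAlg{})$ to $\Pol_1(\Ar_\Omega)$ was already defined in \ref{SSS:PolygraphVsPolyautomaton} using the isomorphism $\psi$ from Lemma~\ref{L:OmegaIsomorphic}. For the reverse direction $\mathscr{G}$, given a $1$-polyautomaton $(\Sigma_0, \Sigma_1, \Ar_\Omega)$ on the bracket polyautomaton, I would decompose $\Sigma_0 = Z \sqcup \mathrm{Brck}(\Omega)$, take $Z$ as the set of $0$-generators of an $\Omega$-$1$-polygraph, and apply $\psi$ to transport each relation $s(\beta) \fl t(\beta)$ of $\Sigma_1$ into the corresponding relation $\psi(s(\beta)) \fl \psi(t(\beta))$ in $\lin{Z}$. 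Verifying that $\Sigma(-)$ and $\mathscr{G}$ are mutually quasi-inverse then reduces to checking that $\psi$ and $\psi^{-1}$ are mutually inverse bijections (Lemma~\ref{L:OmegaIsomorphic}) and that they behave naturally on morphisms of polygraphs and polyautomata.

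For the algebra-level equivalence, I would descend the polygraph equivalence to quotients. Every $\Omega$-algebra $A$ admits a presentation by some $\Omega$-$1$-polygraph $X$, giving $A \cong \overline{X} = \lin{Z}/I_\Omega(X_1)$. Applying $\Sigma(-)$ and invoking Lemma~\ref{L:OmegaIsomorphic} once more yields
\[
A \;\cong\; \lin{Z}/I_\Omega(X_1) \;\cong\; \bk\Sigma_0^{\Ar_\Omega}/\Sigma(X)_1,
\]
which defines the induced functor on objects. Morphisms are transported through $\psi$, and the assignment is well-defined on isomorphism classes by Tietze equivalence, since any two presentations of the same $\Omega$-algebra yield isomorphic quotients. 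The functor in the opposite direction is obtained analogously, starting from a presentation of an associative algebra as a $1$-polyautomaton on $\Ar_\Omega$ and applying $\mathscr{G}$.

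The main obstacle I anticipate is the verification that $\psi$ is compatible with the ideal structures on both sides: one must show that $\psi^{-1}$ maps the $\Omega$-ideal $I_\Omega(X_1)$ of $\lin{Z}$ bijectively onto the associative two-sided ideal of $\bk\Sigma_0^{\Ar_\Omega}$ generated by $\psi^{-1}(X_1)$. This amounts to translating the action of one-hole $\Omega$-contexts $q \in Z^\Omega[\square]$ into ordinary left-right multiplication by bracket strings accepted by $\Ar_\Omega$, and showing that the two notions of ideal generation coincide under this translation. A secondary subtlety is the essential surjectivity at the algebra level: this forces the appropriate reading of $\Alg$ on the right, namely the full subcategory of associative algebras admitting a presentation by a $1$-polyautomaton on $\Ar_\Omega$, under which every $\Omega$-algebra is captured. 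Finally, one must verify naturality to ensure that the algebra-level functors are well-defined on morphisms, which follows from the functoriality of $\Sigma(-)$ combined with Tietze equivalence.
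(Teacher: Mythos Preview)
Your plan rests on the right ingredient—the isomorphism $\psi$ of Lemma~\ref{L:OmegaIsomorphic}—but diverges from the paper in two ways, one of which is a real gap.

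On the forward direction, the paper avoids your arbitrary-presentation-plus-Tietze detour by using the \emph{standard} presentation $\mathrm{Std}(A)$ of an $\Omega$-algebra $A$: its $0$-generators are the elements of $A$ and its $1$-generators are $u\otimes v \to uv$ and $\brck{u}{\tau}\to\Tr_\tau(u)$. Setting $F(A):=\bk\Sigma_0^{\Ar_\Omega}/\Sigma_1$ with $(\Sigma_0,\Sigma_1,\Ar_\Omega)=\Sigma(\mathrm{Std}(A))$ is then canonical and functorial with no choice involved, so your anticipated verification that $\psi$ carries $\Omega$-ideals to two-sided ideals, and the Tietze-invariance check, are both bypassed: they are absorbed into the single isomorphism $\overline{\mathrm{Std}(A)}\cong A$ transported through $\psi$.

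On the reverse direction, the paper does \emph{not} invert the polyautomaton construction as you propose. It defines $G:\Alg\to\OAlg{}$ simply by regarding each associative algebra as an $\Omega$-algebra with empty $\Omega$, and then exhibits natural isomorphisms $\eta_A=\psi^{-1}:A\to GF(A)$ and $\eta'_B=\id_B:B\to FG(B)$. Your resolution of essential surjectivity—reinterpreting $\Alg$ as the subcategory of algebras presentable by a $1$-polyautomaton on $\Ar_\Omega$—changes the statement being proved; the paper needs no such restriction precisely because $G$ never passes through $\Ar_\Omega$ at all. This is the step where your outline would not close into a proof of the theorem as stated. The polygraph-level equivalence then follows from the algebra-level one together with the correspondence of~\eqref{SSS:PolygraphVsPolyautomaton}, rather than the other way around.
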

\begin{proof} 
Consider $\mathrm{Std}(A)$ the standard $\Omega$-$1$-polygraph of an $\Omega$-algebra $(A,\Tr_\tau)$. 
Its $0$-generators are elements of $A$ and its $1$-generators are $u \otimes v \to uv$ and $\brck{u}{\tau} \to \Tr_\tau(u)$, for all $u, v \in A$ and $\tau \in \Omega$, where $u\otimes v$ denotes the product of $u$ and $v$ in the free algebra $\lin{A}$, and $uv$ as their product in $A$. 
We define the functor $F: \OAlg{} \to \Alg$ by setting $F(A)=\bk \Sigma_0^{\Ar_\Omega}/\Sigma_1$, where $\Sigma(\mathrm{Std}(A))$ is the $1$-polyautomaton on the $\Omega$-$1$-polygraph $\mathrm{Std}(A)$  as defined in \eqref{SSS:PolygraphVsPolyautomaton}. The action of $F$ on morphisms is defined naturally.
Conversely, we define the functor $G: \Alg \to \OAlg{}$ by regarding every algebra as an $\Omega$-algebra with an empty set $\Omega$.

Next, for an $\Omega$-algebra $A$ and an algebra $B$, we consider natural transformations $\eta_A:=\psi^{-1}: A \to GF(A)$ and $\eta'_B := \id_B: B \to FG(B)$.
By definition, the following diagrams commute
\[
\xymatrix @R=2em {
A_1
   \ar[r]^{\hspace{-0.4cm} \eta_{A_1}}
	\ar[d]_{\hspace{0.5cm} f}
& GF(A_1)
	\ar[d] ^-{GF(f)}
\\
A_2
	\ar[r] _-{\eta_{A_2}}
& GF(A_2)
}
\qquad \quad
\xymatrix @R=2em {
B_1
   \ar[r]^{\hspace{-0.4cm} \eta'_{B_1}}
	\ar[d]_{\hspace{0.5cm} g}
& FG(B_1)
	\ar[d] ^-{FG(g)}
\\
B_2
	\ar[r] _-{\eta'_{B_2}}
& FG(B_2)
}
\]
for all $\Omega$-algebras $A_1,A_2$ and algebras $B_1,B_2$.
Therefore, we establish an equivalence of categories between $\OAlg{}$ and $\Alg$.
By \eqref{SSS:PolygraphVsPolyautomaton}, the equivalence between the categories $\Pol_1(\OAlg{})$ and $\Pol_1(\Alg)$ follows from that between $\OAlg{}$ and $\Alg$.
\end{proof}

\begin{remark}
 One can gain insight into the construction of $\Ar_\Omega$ in \eqref{EQ:PolyautomataForOmegaMonoids} by alternatively defining a subset of $\Sigma_0^\ast$, consisting of monomials $u$ that satisfy the following three conditions.
 Let $\operatorname{deg}_{\mathsf{\ell}_i}(u)$ and $\operatorname{deg}_{\mathsf{r}_i}(u)$ denote the number of occurrences of $\mathsf{\ell}_i$ and $\mathsf{r}_i$ in $u$, respectively.
\begin{enumerate}
\item\label{I:Automata1} For each $i$, $\operatorname{deg}_{\mathsf{\ell}_i}(u) = \operatorname{deg}_{\mathsf{r}_i}(u) = n_i$.
\item \label{I:Automata2} Write  $u=u_0\mathsf{\ell}_i u_1 \mathsf{\ell}_i u_2 \cdots u_{n-1} \mathsf{\ell}_i u_{n_i}$, 
where $u_0, \ldots, u_{n_i} \in \{\Sigma_0 \setminus \mathsf{\ell}_i\}^\ast$. The following condition holds
\[
\operatorname{deg}_{\mathsf{r}_i}(u_0) + \operatorname{deg}_{\mathsf{r}_i}(u_1) + \cdots + \operatorname{deg}_{\mathsf{r}_i}(u_m) \leq m, \quad \text{ for } 0 \leq m \leq n_i.
\]
For each $\mathsf{\ell}_i$ located between $u_m$ and $u_{m+1}$, we first search for the first occurrence of $\mathsf{r}_i$ in $u_{m+1}$ from left to right. If no such $\mathsf{r}_i$ is found, we then search for the second occurrence of $\mathsf{r}_i$ in $u_{m+2}$, the third occurrence in $u_{m+3}$, and so on, until it is found. Such a pair $(\mathsf{\ell}_i, \mathsf{r}_i)$ is called an \emph{$\Omega$-pair}.

\item For each $\Omega$-pair $(\mathsf{\ell}_i, \mathsf{r}_i)$, in the monomial $u = w_1 \mathsf{\ell}_i v \mathsf{r}_i w_2$, the submonomial $v$  satisfies conditions {\bf i)} and {\bf ii)}.
\end{enumerate}
Monomials satisfying these conditions are also isomorphic to $\Omega$-monomials, similarly to those in $\Sigma_0^{\Ar_\Omega}$.
\end{remark}

\subsection{From operated to non-operated: critical branchings}

In this subsection, we show how to represent critical branchings of an $\Omega$-$1$-polygraph as critical branchings of the associated linear $1$-polygraph. In this subsection, $X$ stands for an $\Omega$-$1$-polygraph.

\begin{theorem}
\label{T:OmegaCBshape}
There exists a one-to-one correspondence between the sets of critical branchings  $\mathrm{CB}(X)$ and $\mathrm{CB}(\Sigma(X))$.
\end{theorem}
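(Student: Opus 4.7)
The plan is to build an explicit bijection $\Phi \colon \mathrm{CB}(X) \to \mathrm{CB}(\Sigma(X))$ using the algebra isomorphism $\psi \colon \bk\Sigma_0^{\Ar_\Omega} \to \lin{Z}$ of Lemma~\ref{L:OmegaIsomorphic}. For each of the two shapes of critical branchings listed in~\eqref{E:OperatedCriticalBranchings}, the map $\Phi$ simply applies $\psi^{-1}$ to every source, target, and context. An intersection branching $(fw, ug)$ with source $uvw$ is sent to the string-level intersection branching with source $\psi^{-1}(u)\psi^{-1}(v)\psi^{-1}(w)$ and rule sources $\psi^{-1}(uv)$, $\psi^{-1}(vw)$; an inclusion branching $(q|_h, k)$ with $\psi^{-1}(q) = x\square z$ is sent to the string-level inclusion branching whose outer rule has source $x\,\psi^{-1}(v')\,z$. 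Since $\psi$ carries sources of $1$-generators of $X$ to sources of $1$-generators of $\Sigma(X)$, both assignments are well-defined branchings of the right type.

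The backward direction rests on the following bracket-matching observation: if $u_1, u_2 \in \Sigma_0^{\Ar_\Omega}$ admit decompositions $u_1 = xy$ and $u_2 = yz$ in $\Sigma_0^\ast$ with $y \neq \epsilon$, then $y$, $x$ and $z$ all belong to $\Sigma_0^{\Ar_\Omega}$. Indeed, each letter $\mathsf{\ell}_i$ appearing in $y$ must be matched inside the well-bracketed word $u_1 = xy$ by some $\mathsf{r}_i$, and since $y$ is a suffix of $u_1$ that matching $\mathsf{r}_i$ lies in $y$; symmetrically, every $\mathsf{r}_i$ in $y$ is matched by an $\mathsf{\ell}_i$ in $y$ using that $y$ is a prefix of $u_2$. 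Hence $y$ is well-bracketed, and then so are $x$ and $z$ as complements of a well-bracketed subword. An analogous argument shows that if $y$ is a well-bracketed substring of a well-bracketed word $xyz \in \Sigma_0^{\Ar_\Omega}$, then replacing $y$ by $\square$ produces a well-bracketed word $x\square z$ in $(\Sigma_0 \sqcup \{\square\})^{\Ar_\Omega}$, hence the preimage under $\psi$ of some $\Omega$-context $q \in Z^\Omega[\square]$.

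These two observations immediately yield the inverse $\Psi$. For an intersection critical branching of $\Sigma(X)$ with overlap $y$ and surrounding context $x, z$ nonempty, the first observation produces $\Omega$-monomials $u = \psi(x), v = \psi(y), w = \psi(z)$ in $Z^\Omega\setminus\{1\}$ and thus an intersection branching of~$X$ with source $uvw$; for an inclusion critical branching of $\Sigma(X)$ with rule sources $y$ and $xyz$, the second observation produces an $\Omega$-context $q \in Z^\Omega[\square]\setminus\{\square\}$ with $\psi^{-1}(q) = x\square z$, hence an inclusion branching of $X$. The main obstacle is to check that the minimality conditions of the two notions of critical branching match: any factorization $(f, g) = (q'|_{f''}, q'|_{g''})$ in $X$ with $q' \neq \square$ translates under $\psi^{-1}$ to a $\Sigma(X)$-factorization $(x'F''z', x'G''z')$ with $(x', z') \neq (\epsilon, \epsilon)$, while conversely any such $\Sigma(X)$-factorization satisfies that $s(F'') = s(G'') \in \Sigma_0^{\Ar_\Omega}$ is a well-bracketed subword of the outer source, so the bracket-matching observation produces the required $\Omega$-context $q'$. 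This shows that $\Phi$ and $\Psi$ preserve minimality and are mutually inverse, completing the correspondence.
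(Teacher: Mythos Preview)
Your proof is correct and follows the same strategy as the paper: transport critical branchings through the monoid isomorphism $\psi^{-1}$ of Lemma~\ref{L:OmegaIsomorphic}. The paper's version is much terser---it simply invokes the bijectivity of $\psi^{-1}$ without separately checking minimality---while your bracket-matching observation (that $u_1=xy$, $u_2=yz$ well-bracketed forces $x,y,z$ well-bracketed) is exactly the content of Lemma~\ref{L:IntersectionShapes}, which the paper states and proves immediately \emph{after} this theorem rather than inside it.
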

\begin{proof}
We consider a critical branching $(f,g) \in \mathrm{CB}(X)$, where $f: a \to b$ and $g: a \to c$. We map $(f,g)$ to $(f', g')$ in $\mathrm{CB}(\Sigma(X))$,  where $f': \psi^{-1}(a) \to \psi^{-1}(b)$ and $g': \psi^{-1}(a) \to \psi^{-1}(c)$.  
By Lemma~\ref{L:OmegaIsomorphic}, the bijective map $\psi^{-1}$ induces a correspondence between $\mathrm{CB}(X)$ and $\mathrm{CB}(\Sigma(X))$.
\end{proof}

\begin{remark}
\label{R:CriticalBranchingForLinearPolyautomaton}
We denote by $\dashrightarrow$ and $\fl$ the $0$-cells and $1$-cells of $\Sigma(X)$, respectively, and by $\overset{a}{\dashrightarrow} \overset{b}{\dashrightarrow}$ the 0-cells $ab$.
Following Theorem~\ref{T:OmegaCBshape}, the intersection and inclusion branchings of $X$ have the following shapes
\begin{eqn}{equation}
\label{D:TwoCriticalBranchingsShapes}
\xymatrix @C=4em {
\strut 
\ar@{..>} [r] _-{u}
\ar@{..>}@/^7ex/ [rr] ^-{}="aw"
& \strut
\ar@{..>} [r] ^-{v}
\ar@{..>}@/_7ex/ [rr] _-{}="ub"
& \strut
\ar@{..>}[r] ="w"^-{w}
& \strut
\ar@1 "1,2";"aw"!<0pt,3pt> ^-{\alpha}
\ar@1 "1,3";"ub"!<0pt,7.5pt> ^-{\beta}
}
\qquad
\xymatrix @C=4em {
\strut 
\ar@{..>} [r] _-{u'}
\ar@{..>}@/^7ex/ [rrr] ^-{}="aw"
& \strut
\ar@{..>} [r] |-{v'}="jd"
\ar@{..>}@/_7ex/ [r] _-{}="ub"
& \strut
\ar@{..>}[r] ="w"_-{w'}
& \strut
\ar@1 "jd"!<0pt,7.5pt>;"aw"!<0pt,-7.5pt> _-{\alpha}
\ar@1 "jd"!<0pt,7.5pt>;"ub"!<0pt,-7.5pt> ^-{\beta} 
}
\end{eqn}
respectively, with $\alpha,\beta$ in $\Sigma(X)_1$.
For the inclusion branchings in \eqref{D:TwoCriticalBranchingsShapes}, although $u', w' \in \Sigma_0^\ast$, they are not elements of $\Sigma_0^{\Ar_\Omega}$ in general.
For instance, consider the example from~\eqref{R:DifferencePolygraphAnd OmegaPolygraph}
\[
X\:\coloneq\{x,y,z \mid f:\left\lfloor xy \right\rfloor\fl yx, g:xy\fl z \}.
\]
We have the $1$-polyautomaton $\Sigma(X)=(\Sigma_0,\Sigma_1,\Ar_\Omega)$, where $\Sigma_0 = \{x, y, z, \mathsf{\ell},\mathsf{r}\}$ and $\Sigma_1 = \{\alpha: \mathsf{\ell} xy \mathsf{r} \fl yx, \beta: xy \fl z\}$. Then, the critical branching of  $X$ can be illustrated as follows
\[
\xymatrix @C=2.5em {
\ar@{..>} [r] _-{\mathsf{\ell}}="s"
\ar@{..>}@/^6ex/ [rrrr] ^-{yx}="aw"
&
\ar@{..>} [r] ^{x}="t"
\ar@{..>}@/_6ex/ [rr] _-{z}="ub"
& 
\ar@{..>}[r] ^-{y}="w"
& 
\ar@{..>}[r] _-{\mathsf{r}}="r"
& 
\ar@1 "1,3";"aw"!<0pt,3pt> ^-{\alpha }
\ar@1 "1,3";"ub"!<0pt,7.5pt> ^-{\beta}
}
\]
Here, we have $x, y, \mathsf{\ell}, \mathsf{r} \in \Sigma_0^\ast$, but $\mathsf{\ell}, \mathsf{r} \notin \Sigma_0^{\Ar_\Omega}$, as they are not accepted by the PDA $\Ar_\Omega$.
\end{remark}

\begin{lemma}
\label{L:IntersectionShapes}
Let $u, v, w \in \Sigma_0^\ast$ as in \eqref{SSS:OperatedRewriting}. If both $uv$ and $vw$ belong to $\Sigma_0^{\Ar_\Omega}$, then $u, v, w$ belong to $\Sigma_0^{\Ar_\Omega}$.
\end{lemma}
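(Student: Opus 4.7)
The plan is to work at the level of stack configurations of the PDA~$\Ar_\Omega$. For a word $x\in\Sigma_0^\ast$ and a valid stack $S$ (with $\$$ at the bottom), I would denote by $\sigma(x,S)$ the stack obtained by simulating $x$ at state $q_1$ starting from $S$, whenever the simulation does not get stuck on an invalid pop. By the construction of $\Ar_\Omega$ in~\eqref{EQ:PolyautomataForOmegaMonoids}, a word $x$ belongs to $\Sigma_0^{\Ar_\Omega}$ precisely when $\sigma(x,\$)$ is defined and equal to $\$$.

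The key structural observation I would establish first is: if $\sigma(v,\$)$ is defined, say $\sigma(v,\$)=\$p_1\cdots p_k$, then $\sigma(v,S)$ is defined for every valid stack $S$, and $\sigma(v,S)=S\cdot p_1\cdots p_k$. Indeed, $\sigma(v,\$)$ being defined means that every occurrence of $\mathsf{r}_i$ in $v$ is matched by some preceding unmatched $\mathsf{\ell}_i$ inside $v$; the remaining unmatched left brackets, in the order they are pushed, form $p_1,\ldots,p_k$. The bracket matching inside $v$ then depends only on $v$ itself and not on what lies below the initial top of the stack, so replacing the initial $\$$ by an arbitrary $S$ leaves the simulation valid and simply pushes $p_1\cdots p_k$ on top of $S$. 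A direct induction on $|v|$, splitting on whether the first letter is in $Z$, an $\mathsf{\ell}_i$, or an $\mathsf{r}_i$, yields this fact.

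Once this observation is in hand, the proof of the lemma is short. From $vw\in\Sigma_0^{\Ar_\Omega}$, the prefix $v$ must be simulable from $\$$, so $\sigma(v,\$)=\$p_1\cdots p_k$ is defined. From $uv\in\Sigma_0^{\Ar_\Omega}$, setting $S_u\coloneq\sigma(u,\$)$, the identity $\sigma(v,S_u)=S_u\cdot p_1\cdots p_k=\$$ forces $S_u=\$$ and $k=0$. Hence both $u$ and $v$ lie in $\Sigma_0^{\Ar_\Omega}$. Finally, since $\sigma(v,\$)=\$$, the chain $\sigma(w,\$)=\sigma(vw,\$)=\$$ shows $w\in\Sigma_0^{\Ar_\Omega}$ as well.

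The hard part will be the structural observation, which is a multi-typed version of the standard fact that a Dyck prefix acts on any stack by pushing a fixed sequence of left brackets. The main subtlety is to track the interaction of brackets of different $\Omega$-indices; the proof relies on the fact that brackets of distinct indices never match each other in the transitions of $\Ar_\Omega$, so each index behaves independently in the stack simulation.
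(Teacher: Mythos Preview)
Your proposal is correct and follows essentially the same approach as the paper's proof: both arguments track the stack configuration of $\Ar_\Omega$ after processing $v$ from the initial stack $\$$, and use the fact that this effect is independent of what lies below to derive a contradiction with $uv\in\Sigma_0^{\Ar_\Omega}$ unless $\sigma(v,\$)=\$$. Your version is more carefully formalized, making explicit the notation $\sigma(x,S)$ and isolating the key structural observation as a separate claim, whereas the paper leaves this step implicit in the sentence ``if the top of the stack contains the symbol $i$ after outputting $y_1\cdots y_n$, it will still contain $i$ after outputting $x_1\cdots x_m y_1\cdots y_n$.''
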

\begin{proof}
If either $uv$ or $vw$ is $\epsilon$, the result follows trivially.
Let $u = x_1 \cdots x_m$, $v = y_1 \cdots y_n$, and $w = z_1 \cdots z_k$, where $x_i, y_i, z_i \in \Sigma_0$.
Since $vw \in \Sigma_0^{\Ar_\Omega}$, the PDA~$\Ar_\Omega$ can accept the monomial $y_1 \cdots y_n z_1 \cdots z_k$. This implies that $\Ar_\Omega$ can output $y_1 \cdots y_n$ while remaining in state $q_1$ in \eqref{EQ:PolyautomataForOmegaMonoids}. At this point, the top of the stack may contain either the symbol $i$ or $\$$.

Since $uv \in \Sigma_0^{\Ar_\Omega}$, $\Ar_\Omega$ also accepts the monomial $x_1 \cdots x_m y_1 \cdots y_n$. If the top of the stack contains the symbol $i$ after outputting $y_1 \cdots y_n$, it will still contain $i$ after outputting $x_1 \cdots x_m y_1 \cdots y_n$. This prevents~$\Ar_\Omega$ from transitioning to $q_2$ and halting, which contradicts $uv \in \Sigma_0^{\Ar_\Omega}$.

Therefore, when $\Ar_\Omega$ outputs $y_1 \cdots y_n$, the top of the stack must be $\$$, ensuring that $\Ar_\Omega$ accepts $y_1 \cdots y_n$, transitions to state $q_2$, and halts. Hence, $v \in \Sigma_0^{\Ar_\Omega}$.
Finally, if either $u$ or $w$ were not in~$\Sigma_0^{\Ar_\Omega}$, it would contradict the assumption that both $uv$ and $vw$ belong to $\Sigma_0^{\Ar_\Omega}$. Thus, we conclude that $u, v, w \in~\Sigma_0^{\Ar_\Omega}$.
\end{proof}

\begin{corollary}
\label{C:IntersectionShape}
For the intersection branchings in \eqref{D:TwoCriticalBranchingsShapes},  we have $u,v,w\in \Sigma_0^{\Ar_\Omega}$.
\end{corollary}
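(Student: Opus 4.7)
The plan is to observe that this corollary is essentially an immediate consequence of Lemma~\ref{L:IntersectionShapes}, so the work lies in correctly identifying the hypotheses of that lemma in the current setting. First, I would recall the meaning of the intersection branching shape in \eqref{D:TwoCriticalBranchingsShapes}: the two rules $\alpha,\beta$ of $\Sigma(X)_1$ appearing in the picture arise, via the functor $\Sigma(-)$ of \eqref{SSS:PolygraphVsPolyautomaton}, as $\alpha = \psi^{-1}(f)$ and $\beta = \psi^{-1}(g)$ for some intersection critical branching $(fw_0, u_0 g)$ of $X$ with $f:u_0 v_0 \to a$ and $g:v_0 w_0 \to b$ in $X_1$. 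In particular, the sources of $\alpha$ and $\beta$ are $\psi^{-1}(u_0 v_0)$ and $\psi^{-1}(v_0 w_0)$, and in the picture these are the displayed $\Sigma_0^\ast$-words $uv$ and $vw$.

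Second, since $(\Sigma_0,\Sigma_1,\Ar_\Omega)$ is a $1$-polyautomaton on $\Ar_\Omega$ by definition of the functor $\Sigma(-)$, the sources $s(\alpha)=uv$ and $s(\beta)=vw$ must lie in $\Sigma_0^{\Ar_\Omega}$; equivalently, $\psi$ maps them to $\Omega$-monomials in $Z^\Omega$, as guaranteed by Lemma~\ref{L:OmegaIsomorphic}.

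Finally, having verified that both $uv$ and $vw$ belong to $\Sigma_0^{\Ar_\Omega}$, I would apply Lemma~\ref{L:IntersectionShapes} directly to conclude that each of $u$, $v$, and $w$ belongs to $\Sigma_0^{\Ar_\Omega}$. There is no genuine obstacle here; the only subtle point is to make sure the decomposition $uv,vw \in \Sigma_0^{\Ar_\Omega}$ is read off correctly from the definition of a critical intersection branching in $\Sigma(X)$, and in particular that the middle factor $v$ is nonempty (so that the branching is truly of intersection type rather than Peiffer), which is exactly the situation in which Lemma~\ref{L:IntersectionShapes} applies.
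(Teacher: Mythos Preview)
Your proposal is correct and follows exactly the route the paper intends: the corollary is stated without proof because it is immediate from Lemma~\ref{L:IntersectionShapes}, once one observes that $uv = s(\alpha)$ and $vw = s(\beta)$ are sources of $1$-generators of the $1$-polyautomaton $\Sigma(X)$ and hence lie in $\Sigma_0^{\Ar_\Omega}$. Your closing remark about $v$ being nonempty is harmless but unnecessary, since Lemma~\ref{L:IntersectionShapes} already covers the degenerate cases.
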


\subsubsection{Higher critical branchings}
\label{SSS:Higher-critical branching}
For $n \geq 2$, a \emph{critical $n$-branching of $X$} is a tuple $(f_1, \ldots, f_n)$ of $1$-cells $f_i$ with the same source, such that each pair $(f_i, f_j)$ is a critical branching of $X$ for all $i \neq j$.

\begin{example}
\label{E:InvolutiveAlgebra}
Consider an \LO-1-polygraph $X^I$ with $X_1^I\coloneq \{\beta_u:\brca{\brca{u}}\fl u \text{ }|\text{ } u\in Z^\Omega \}$ .
We denote by $\brca{u}^k$ the $k$-fold bracketing of~$u$. Then, for $n\geq 2$, the $n$-tuple
\[
\left(\beta_{\brca{u}^{n-1}}, \brca{\beta_{\brca{u}^{n-2}}}, \brca{\beta_{\brca{u}^{n-3}}}^2,\ldots, \brca{\beta_{u}}^{n-1}\right)
\]
is a critical $n$-branching with source $\brca{u}^{n+1}$ for every $0$-cells $u$.
In particular, we have the $2$-critical branching $(\beta_{\brca{u}}, \brca{\beta_u})$ and the $3$-critical branching $(\beta_{\brca{u}^2}, \brca{\beta_{\brca{u}}}, \brca{\beta_u}^2)$ as follows
\[
\xymatrix@!R@!C@C=4em{
\brca{\brca{\brca{u}}}
\ar@/^5ex/ [r] ^-{\brca{\beta_u}}
\ar@/_5ex/ [r] _-{\beta_{\brca{u}}}
& \brca{u}
}
\qquad\quad
\xymatrix@!R@!C@C=6em{
\brca{\brca{\brca{\brca{u}}}}
\ar@/^5ex/ [r] ^-{\brca{\brca{\beta_u}}}
\ar@// [r] |-{\brca{\beta_{\brca{u}}}}
\ar@/_5ex/ [r] _-{\beta_{\brca{\brca{u}}}}
& \brca{\brca{u}}
}
\]
All critical branchings of $X^I$ are confluent. 
The termination of $X^I$ follows from the decrease in the number of operators under the application of the rule $\beta_u$. Consequently, $X^I$ is convergent.
\end{example}

\begin{remark}
\label{R:Reducedn-Branchings}
When the polygraph $X$ is reduced, all its critical branchings are intersection branchings as in \eqref{D:TwoCriticalBranchingsShapes}.
Indeed, the inclusion branchings in \eqref{D:TwoCriticalBranchingsShapes} imply that there exist two rewriting steps $\alpha$ and $u'\beta w'$ with source $u'v'w'$.
Therefore, for any critcial $n$-branching $(f_1, \dots, f_n)$ of $X$, each pair $(f_i, f_j)$ is an intersection branching of $X$ for all $i \neq j$. For instance, we illustrate critical $3$-branchings of $X$ as follows
\[
\xymatrix @C=3em {
\strut 
\ar@{..>} [r] _-{u_1}
\ar@{..>}@/^7ex/ [rr] ^-{}="a"
& \strut
\ar@{..>} [r] ^{u_2}
\ar@{..>}@/_7ex/ [rrr] _-{}="b"
& \strut
\ar@{..>}[r] ^-{u_3}="w"
& \strut
\ar@{..>}[r] ^-{u_4}="m"
\ar@{..>}@/^7ex/ [rr] ^-{}="c"
& \strut
\ar@{..>}[r] _-{u_5}="s"
& \strut
\ar@1 "1,2";"a"!<0pt,3pt> ^-{}
\ar@1 "w"!<0pt,7.5pt>;"b"!<0pt,-7.5pt> ^-{}
\ar@1 "1,5";"c"!<0pt,3pt> ^-{}
}
\quad
\xymatrix @C=3em {
\strut 
\ar@{..>} [r] _-{u_1}
\ar@{..>}@/^7ex/ [rrr] ^-{}="a"
& \strut
\ar@{..>} [r] _{u_2}="n"
\ar@{..>}@/_7ex/ [rrr] _-{}="b"
& \strut
\ar@{..>}[r] ^-{u_3}="w"
\ar@{..>}@/^7ex/ [rrr] ^-{}="c"
& \strut
\ar@{..>}[r] _-{u_4}="m"
& \strut
\ar@{..>}[r] _-{u_5}="s"
& \strut
\ar@1 "n"!<0pt,7.5pt>;"a"!<0pt,-7.5pt> ^-{}
\ar@1 "w"!<0pt,7.5pt>;"b"!<0pt,-7.5pt> ^-{}
\ar@1 "m"!<0pt,7.5pt>;"c"!<0pt,-7.5pt> ^-{}
}
\]
\end{remark}

\section{Polygraphic resolutions of operated algebras}
\label{S:PolygraphicResolutions}

In this section, we present the acyclic properties of an \LO-$\omega$-polygraph using homotopical contractions, as introduced in \cite{GuiraudHoffbeckMalbos19}. The main result of this paper, Theorem~\ref{T:Main conclusion}, constructs polygraphic resolutions for $\Omega$-algebras from convergent and reduced presentations, extending the constructions for associative algebras \cite{GuiraudHoffbeckMalbos19}, categories \cite{GuiraudMalbos12advances}, and operads \cite{MalbosRen23} to $\Omega$-algebras.

\subsection{Polygraphic resolutions and contractions}

A notion of homotopy on associative $\omega$-algebras were introduced in~\cite{GuiraudHoffbeckMalbos19}.
In this subsection, we extend this notion to $\Omega$-$\omega$-algebras. 
To account for the operator structure, we introduce the notion of bracket contraction to characterize acyclic $\Omega$-$\omega$-algebras.

\subsubsection{Polygraphic resolutions}
A cellular extension $Y$ of an $n$-algebra $A$ is \emph{acyclic} if, for every $n$-sphere $(f,g)$ in $A_n$, there exists an $(n+1)$-cell of the free $(n+1)$-algebra $A[Y]$ with source $f$ and target~$g$.
An \LO-$\omega$-polygraph $X$ is a \emph{polygraphic resolution} of an $\Omega$-algebra $A$ if its underlying \LO-1-polygraph $(Z, X_1)$ is a presentation of~$A$ and all the cellular extensions $X_n$ are acyclic.

\subsubsection{Homotopies}
Let $F,G:A\fl B$ be two morphisms of \LO-$\omega$-algebras. A \emph{homotopy from~$F$ to~$G$} is an \emph{indexed morphism} of \LO-$\omega$-algebras
\[
\eta \: : \: A \ofl{} B
\]
of degree~$1$, namely, a sequence $\eta = (\eta_k : A_k \fl B_{k+1})_{k \geq 0}$ of morphisms of \LO-algebras, satisfying the following conditions, where we write~$\eta_a\coloneq\eta_k(a)$ for every $a\in A_k$,
\begin{enumerate}
\item for every~$0$-cell~$a$ of~$A$, 
\[
s(\eta_a) = F(a)
\qquad\text{and}\qquad
t(\eta_a) = G(a),
\]
\item for $n\geq 1$ and every $n$-cell~$a$ of~$A$,
\begin{eqn}{align}
\label{E:SourceHomotopy}
s(\eta_a) &= F(a) \star_0 \eta_{t_0(a)} \star_1 \cdots \star_{n-1} \eta_{t_{n-1}(a)}, 
\\[0.5ex]
\label{E:TargetHomotopy}
t(\eta_a) &= \eta_{s_{n-1}(a)} \star_{n-1} \cdots \star_1 \eta_{s_0(a)} \star_0 G(a),
\end{eqn}
\item for $n\geq 0$ and every $n$-cell~$a$ of~$A$,
\[
\eta_{1_a} = 1_{\eta_a}.
\]
\end{enumerate}

Following {\cite[Def. 5.1.1]{GuiraudHoffbeckMalbos19}} in the associative setting, \eqref{E:SourceHomotopy} and \eqref{E:TargetHomotopy} are well-defined. 
The globularity of $\eta_a$, for every $n$-cell $a$ of $A$, follows from
\begin{align*}
& ss(\eta_a)
= s(F(a)) \star_0 \eta_{t_0(a)} \star_1 \cdots \star_{n-2} \eta_{t_{n-2}(a)} 
= s(\eta_{s(a)}) 
= st(\eta_a)
\\[0.5ex]
\text{and}\quad 
&
ts(\eta_a) 
= t(\eta_{t(a)})
= \eta_{s_{n-2}(a)} \star_{n-2} \cdots \star_1 \eta_{s_0(a)} \star_0 t(G(a))
= tt(\eta_a).
\end{align*}

\begin{remark}
From this definition, we prove by induction on $n$ that for every $a\in A_n$, the following relation holds 
\[
\eta_{\brca{a}} = \brca{\eta_{a}}.
\]
Indeed, for $a\in A_0$, since $\brca{~}$ commutes with the morphisms $F$ and $G$, we have $\eta_{\brca{a}} = \brca{\eta_{a}}$.
Assume $\eta_{\brca{a}} = \brca{\eta_{a}}$ holds for every $a\in A_k$ with $0 \leq k \leq n$.
Given $a\in A_{n+1}$, using the inductive hypothesis and the commutativity of $\brca{~}$ with $F$, $s$, and $\star_0$, we have
\[
s(\eta_{\brca{a}}) = \brca{F(a)} \star_0 \brca{\eta_{t_0(a)}} \star_1 \cdots \star_{n-1} \brca{\eta_{t_{n-1}(a)}} = \brca{s(\eta_{a})}.
\]
Similarly, we deduce
$t(\eta_{\brca{a}}) = \brca{t(\eta_{a})}$.
\end{remark}

\begin{example}
In low dimensions, the homotopy $\eta$ maps a 1-cell $f: a \fl a'$ of $A$ to a 2-cell
\[
\xymatrix @R=0.25em @C=1.5em {
& F(a')
\ar@/^/ [dr] ^-{\eta_{a'}}
\ar@2 []!<0pt,-15pt>;[dd]!<0pt,15pt> ^-*+{\eta_f}
\\
F(a)
\ar@/^/ [ur] ^-{F(f)}
\ar@/_/ [dr] _-{\eta_a}
&& G(a')
\\
& G(a)
\ar@/_/ [ur] _-{G(f)}
}
\]
of~$B$, and a $2$-cell $H:f\dfl f':a\fl a'$ of~$A$ to a $3$-cell 
\[
\vcenter{\xymatrix @R=0.25em @C=4em {
& F(a')
\ar@/^/ [dr] ^-{\eta_{a'}}
\ar@2 []!<5pt,-15pt>;[dd]!<5pt,15pt> ^-*+{\eta_{f'}}
\\
F(a)
\ar@/^5ex/ [ur] ^(0.2){F(f)} _{}="s"
\ar@/_/ [ur] |-*+{F(f')} ^{}="t"
\ar@2 "s"!<-6pt,-10pt>;"t"!<-9pt,10pt> ^-{F(H)}
\ar@/_/ [dr] _{\eta_a}
&& {G(a')} 
\\
& G(a)
\ar@/_/ [ur] _-{G(f')}
}}
\quad \tfl^{\eta_H}\quad
\vcenter{\xymatrix @R=0.25em @C=4em {
& F(a')
\ar@/^/ [dr] ^-{\eta_{a'}}
\ar@2 []!<-10pt,-15pt>;[dd]!<-10pt,15pt> ^-*+{\eta_{f}}
\\
F(a)
\ar@/^/ [ur] ^-{F(f)}
\ar@/_/ [dr] _{\eta_a}
&& {G(a')} 
\\
& G(a)
\ar@/^/ [ur] |-*+{G(f)} _-{}="s"
\ar@/_5ex/ [ur] _(0.8){G(f')} ^{}="t"
\ar@2 "s"!<-7pt,-12pt>;"t"!<-10pt,7pt> ^-{G(H)}
}}
\]
of $B$, and a 3-cell $\Gamma: H \tfl H': f \dfl f': a \fl a'$

\[
\xymatrix@!R@!C@C=9em{
a
\ar@/^5ex/ [r] ^-{f}="src"
\ar@/_5ex/ [r] _-{f'}="tgt"
\ar@2 "src"!<-18pt,-15pt>;"tgt"!<-18pt,15pt> _-*+{H} ^-{}="src2"
\ar@2 "src"!<18pt,-15pt>;"tgt"!<18pt,15pt> ^-*+{H'} _-{}="tgt2"
\ar@3 "src2"!<7.5pt,0pt>;"tgt2"!<-7.5pt,0pt> ^-*+{\Gamma}
& a'
}
\]
to a 4-cell of $B$ whose the source is 
\[
\xymatrix@!C@C=25em{
F(a)
\ar@/^6ex/ [r] ^-{F(f)\star_0\eta_{a'}}="src"
\ar@/_6ex/ [r] _-{\eta_{a}\star_0 G(f')}="tgt"
\ar@2 "src"!<-57pt,-18pt>;"tgt"!<-57pt,18pt> _-*+{\scriptscriptstyle F(H)\star_0 \eta_{a'} \star_1 \eta_{f'}} ^-{}="src2"
\ar@2 "src"!<0pt,-15pt>;"tgt"!<0pt,15pt> |(0.71){\scriptscriptstyle F(H')\star_0 \eta_{a'}\star_1 \eta_{f'}} _-{}="tgt1"
\ar@2 "src"!<57pt,-18pt>;"tgt"!<57pt,18pt> ^-*+{\scriptscriptstyle \eta_{f}\star_1\eta_{a}\star_0G(H')} _-{}="tgt2"
\ar@3 "src2"!<7.5pt,5pt>;"tgt1"!<-7.5pt,5pt> ^-*+{\scriptscriptstyle F(\Gamma)\star_0 \eta_{a'} \star_1 \eta_{f'}}
\ar@3 "tgt1"!<7.5pt,5pt>;"tgt2"!<-7.5pt,5pt> ^-*+{\scriptscriptstyle\eta_{H'}}
& G(a')
}
\]
and target is
\[
\xymatrix@!C@C=25em{
F(a)
\ar@/^6ex/ [r] ^-{F(f)\star_0\eta_{a'}}="src"
\ar@/_6ex/ [r] _-{\eta_{a}\star_0 G(f')}="tgt"
\ar@2 "src"!<-57pt,-18pt>;"tgt"!<-57pt,18pt> _-*+{\scriptscriptstyle F(H)\star_0 \eta_{a'} \star_1 \eta_{f'}} ^-{}="src2"
\ar@2 "src"!<0pt,-15pt>;"tgt"!<0pt,15pt> |(0.71){\scriptscriptstyle \eta_f \star_1 \eta_a \star_0 G(H)} _-{}="tgt1"
\ar@2 "src"!<57pt,-18pt>;"tgt"!<57pt,18pt> ^-*+{\scriptscriptstyle \eta_{f}\star_1\eta_{a}\star_0G(H')} _-{}="tgt2"
\ar@3 "src2"!<7.5pt,5pt>;"tgt1"!<-7.5pt,5pt> ^-*+{\scriptscriptstyle \eta_H}
\ar@3 "tgt1"!<7.5pt,5pt>;"tgt2"!<-7.5pt,5pt> ^-*+{\scriptscriptstyle\eta_f \star_1 \eta_a \star_0 G(\Gamma)}
& G(a')
}
\]
\end{example}

The following result is proved as in the case of linear-polygraphs  {\cite[Lem.~5.1.4]{GuiraudHoffbeckMalbos19}}.

\begin{lemma}
\label{L:LemmaOfHomotopy}
Let~$X$ be an \LO-$\omega$-polygraph, $(A, \Tr)$ an $\omega$-algebra, and~$F,G:\lin{X}\fl A$ morphisms of $\omega$-algebras. A homotopy~$\eta$ from~$F$ to~$G$ is uniquely and entirely determined by its values on the $n$-monomials of~$\lin{X}$, for~$n\geq 0$, provided the relation
\begin{eqn}{equation}
\label{E:HomotopyExchange}
\eta_{us_0(v)} + \eta_{t_0(u)v} - \eta_{t_0(u)s_0(v)} 
\: = \: \eta_{s_0(u)v} + \eta_{ut_0(v)} - \eta_{s_0(u)t_0(v)} 
\end{eqn}
is satisfied for all $n$-monomials~$u$ and~$v$ of~$\lin{X}$.
\end{lemma}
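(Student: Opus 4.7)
The plan is to reduce the claim to the structural description of $\omega$-algebras via globular operated bimodules given by Proposition~\ref{P:IsomorphismOmegaAlgebrasBimodules}: every $n$-cell of $\lin{X}$ decomposes as $a = \sum_i \lambda_i v_i + 1_c$, a linear combination of $\Omega$-$n$-monomials $v_i$ together with an identity on an $(n-1)$-cell $c$, and this decomposition is unique up to the exchange relation~\eqref{E:LinearExchangeRelation}. Since each $\eta_k$ is a morphism of $\Omega$-algebras, hence linear and operator-compatible, and since $\eta_{1_c} = 1_{\eta_c}$ by condition~(iii), the formula $\eta_a = \sum_i \lambda_i \eta_{v_i} + 1_{\eta_c}$ is forced once the values on $n$-monomials are specified---provided it descends to the quotient by the exchange relation.

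I would proceed by induction on the dimension $n$. For $n=0$, the $0$-monomials are the elements of $Z^\Omega$; since $\lin{X}_0 = \lin{Z}$ is their $\bk$-linear span and the exchange relation is vacuous in dimension zero, linearity of $\eta_0$ determines it uniquely on $\lin{Z}$. For the inductive step, assume $\eta_k$ is determined on the $k$-cells for every $k < n$. Given an $n$-cell $a$ with a decomposition as above, the formula $\eta_a = \sum_i \lambda_i \eta_{v_i} + 1_{\eta_c}$ fixes $\eta_a$, the value $\eta_c$ being already prescribed by the induction hypothesis. This yields uniqueness.

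Next, I would check that this assignment descends to the quotient defining the $n$-cells. The only ambiguity in the decomposition~\eqref{E:OmegaMonomialDecomposition} is precisely the exchange relation~\eqref{E:LinearExchangeRelation}; applying $\eta_n$ linearly to both sides of~\eqref{E:LinearExchangeRelation} with two $n$-monomials $u$ and $v$ yields exactly~\eqref{E:HomotopyExchange}. Hence the hypothesized relation is necessary and sufficient for the extension to be well-defined as a linear map from the $n$-cells to the $(n+1)$-cells of the target. Multiplicativity and compatibility with operators are then automatic: products and bracketings of $n$-monomials are themselves $n$-monomials, so the extension is determined on a generating set of the $\Omega$-algebra structure.

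The main obstacle, I expect, will be verifying that the extended map satisfies the source/target conditions~\eqref{E:SourceHomotopy} and~\eqref{E:TargetHomotopy} on an arbitrary $n$-cell, starting from their validity on $n$-monomials. The key tool is Proposition~\ref{L:PropertiesOmegaAlgebras}~(i), which rewrites each $k$-composition $a \star_k b$ as $a - t_k(a) + b$: this converts the iterated $\star$-expressions on the right-hand sides of~\eqref{E:SourceHomotopy} and~\eqref{E:TargetHomotopy} into linear combinations in the $\eta$-values of the iterated boundaries. Linearity of $\eta_n$, $F$, $G$, $s$, and $t$, together with the inductive hypothesis, then allows one to propagate each identity from the $v_i$ to the full expression $\sum_i \lambda_i v_i + 1_c$, completing the verification that the prescribed data assemble into a genuine homotopy.
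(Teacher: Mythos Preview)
The paper does not give its own proof of this lemma; it simply records that the argument is the same as in the associative case \cite[Lem.~5.1.4]{GuiraudHoffbeckMalbos19}. Your sketch is essentially that argument, correctly adapted to the operated setting: induct on the dimension, use the monomial decomposition~\eqref{E:OmegaMonomialDecomposition}, and identify~\eqref{E:HomotopyExchange} as precisely the condition for the linear extension to descend through the exchange relation~\eqref{E:LinearExchangeRelation}. The use of Proposition~\ref{L:PropertiesOmegaAlgebras}\,(i) to linearise the iterated $\star$-compositions in~\eqref{E:SourceHomotopy}--\eqref{E:TargetHomotopy} is also the right move.

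One small correction. For $n\geq 1$, the product of two $n$-monomials $q|_\alpha$ and $q'|_\beta$ is \emph{not} an $n$-monomial; an $n$-monomial has size~$1$ by definition~\eqref{SSS: HigherOmegaMonomials}. The product on $\lin{X}_n$ is recovered from the $\lin{X}_0$-bimodule structure via $ab = a\,s_0(b) + t_0(a)\,b - t_0(a)\,s_0(b)$ (this is how Proposition~\ref{P:IsomorphismOmegaAlgebrasBimodules} reconstructs the algebra from the bimodule). So multiplicativity of $\eta_n$ is not ``automatic'' from closure of $n$-monomials under products; rather, it is \emph{equivalent} to the homotopy exchange relation~\eqref{E:HomotopyExchange}, since applying a multiplicative $\eta_n$ to both sides of the product formula is exactly~\eqref{E:HomotopyExchange}. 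Bracketings, on the other hand, do preserve $n$-monomials, since $\lfloor q|_\alpha\rfloor_\tau = (\lfloor q\rfloor_\tau)|_\alpha$, so your claim is correct for operator-compatibility. This is a minor rewording rather than a gap: the key observation---that~\eqref{E:HomotopyExchange} is both necessary and sufficient for well-definedness---is already in your argument.
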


\subsubsection{Unital sections and contractions}
Let $X$ be an \LO-$\omega$-polygraph, the presented algebra $\cl{X}$ can be viewed as an $\omega$-algebra whose all $n$-cells are identities for $n \geq 1$.
An \emph{unital section} of~$X$ is a linear map of $\omega$-algebras  $\iota:\cl{X}\fl\lin{X}$ that is a section of the canonical projection $\pi:\lin{X}\pfl\cl{X}$ and that satisfies $\iota(1)=1$.
For any $n$-cell  $a$ of~$\lin{X}$, we write~$\rep{a}$ for $\iota\pi(a)$, and have $\rep{a}=1_{\rep{s_0(a)}}$ for~$n\geq 1$.

An \emph{$\iota$-contraction of~$X$} is a homotopy~$\sigma$ from~$\id_{\lin{X}}$ to $\iota\pi$ such that $\sigma_a = 1_a$
for every $n$-cell~$a$ of~$\lin{X}$ that belongs to the image of~$\iota$ or of~$\sigma$. The $\iota$-contraction $\sigma$ is \emph{right and bracketed} if, for every $n \geq 0$, for all $n$-cells $f, g, h$ in $\lin{X}$ with respective $0$-sources $a, b, c$, and for any $\Omega$-monomial $v$ in~$\lin{Z}$, the following conditions hold
\begin{eqn}{align}
\label{E:RightContraction}
\sigma_{fg} &= a\sigma_g \star_0 \sigma_{f\rep{b}},
\\[0.5ex]
\label{E:BracketedContraction}
\sigma_{\left\lfloor h\right\rfloor v} &= \left\lfloor \sigma_h\right\rfloor v \star_0 \sigma_{\left\lfloor \rep{c}\right\rfloor v}.
\end{eqn}
The right-hand side of \eqref{E:BracketedContraction} is well-defined, as $t_0(\left\lfloor \sigma_h \right\rfloor v)= \left\lfloor \rep{c} \right\rfloor v =s_0(\sigma_{\left\lfloor \rep{c} \right\rfloor v})$
holds.

\begin{example}
Let us explain the right and bracketed $\iota$-contraction $\sigma$ in low dimensions. For 0-cells $a,b \text{ and } c$, we have
\[
\xymatrix @R=0.75em {
& {a\rep{b}}
\ar @/^/ [dr] ^-{\sigma_{a\rep{b}}} 
\ar@{} [d] |(0.70){\sm =} 
\\
ab
\ar @/^/ [ur] ^-{a\sigma_{b}} 
\ar @/_/ [rr] _-{\sigma_{ab}} 
&& {\rep{ab}}
}
\quad\quad\quad\quad
\xymatrix @R=0.75em {
& \left\lfloor \rep{c}\right\rfloor v
\ar@/^/ [dr] ^-{\sigma_{\left\lfloor \rep{c}\right\rfloor v}}
\ar@{} [d] |(0.70){\sm =} 
\\
\left\lfloor c\right\rfloor v
\ar@/^/ [ur] ^-{\left\lfloor \sigma_c\right\rfloor v}
\ar@/_/ [rr] _{\sigma_{\left\lfloor c\right\rfloor v}}
&& {\rep{\left\lfloor c\right\rfloor v} }
}
\]
For 1-cells $f:a\fl b$, $g:c\fl d$ and a 0-cell $v$, we have
\[
\vcenter{\xymatrix @R=0.75em {
& bd
\ar@/^/ [dr] ^-{\sigma_{bd}}
\ar@2 []!<0pt,-10pt>;[d]!<0pt,2.5pt> ^-*+{\sigma_{fg}}
\\
ac
\ar@/^/ [ur] ^-{fg}
\ar@/_/ [rr] _-{\sigma_{ac}}
&& {\rep{ac}}
}}
\qquad = \qquad
\vcenter{\xymatrix @R=1.5em @C = 3.5em {
&& bd
\ar @/^5ex/ [ddrr] ^-{\sigma_{bd}} _-{}="B"
\ar [dr] |-*+{b\sigma_{d}}
\ar@{} [dd] |-{\sm =}
\\
& ad
\ar [ur] |-*+{fd}
\ar [dr] |-*+{f\sigma_{d}}
\ar@2 []!<-5pt,-12pt>;[d]!<-5pt,11pt> ^-*+{a\sigma_g}
&& {b\rep{c}}
\ar [dr] |-*+{\sigma_{b\rep{c}}}
\ar@2 []!<-5pt,-12pt>;[d]!<-5pt,11pt> ^-*+{\sigma_{f\rep{c}}}
\ar@{} [];"B" |(0.6){\sm =}
\\
ac 
\ar @/^5ex/ [uurr] ^-{fg} _-{}="A"
\ar@{} [ur];"A" |(0.5){\sm =}
\ar [ur] |-*+{ag}
\ar [rr] |-*+{a\sigma_c}
\ar @/_5ex/ [rrrr] _-{\sigma_{ac}} ^-{}="C"
&& {a\rep{c}}
\ar [ur] |-*+{f\rep{c}}
\ar [rr] |-*+{\sigma_{a\rep{c}}}
\ar@{} [];"C" |-{\sm =}
&& {\rep{ac}}
}}
\]
\[
\vcenter{\xymatrix @R=0.75em {
& \left\lfloor b\right\rfloor v
\ar@/^/ [dr] ^-{\sigma_{\left\lfloor b\right\rfloor v}}
\ar@2 []!<0pt,-10pt>;[d]!<0pt,2.5pt> ^-*+{\sigma_{\left\lfloor f\right\rfloor v}}
\\
\left\lfloor a\right\rfloor v
\ar@/^/ [ur] ^-{\left\lfloor f\right\rfloor v}
\ar@/_/ [rr] _-{\sigma_{\left\lfloor a\right\rfloor v}}
&& {\rep{\left\lfloor a\right\rfloor v}}
}}
= 
\vcenter{\xymatrix @R=3em @C=4em {
& \left\lfloor b\right\rfloor v
\ar [dr] |-*+{\scriptstyle \left\lfloor \sigma_b\right\rfloor v}
\ar@2 []!<-9pt,-12pt>;[d]!<-9pt,11pt> ^-*+{\scriptstyle \left\lfloor \sigma_f\right\rfloor v}
\ar @/^5ex/ [drrr] ^-{\scriptstyle \sigma_{\left\lfloor b\right\rfloor v}} _-{}="B"
& & &
\\
\left\lfloor a\right\rfloor v
\ar@/^/ [ur] |-*+{\scriptstyle \left\lfloor f\right\rfloor v}
\ar [rr] |-*+{\scriptstyle \left\lfloor \sigma_a\right\rfloor v}
\ar @/_5ex/ [rrrr] _-{\scriptstyle \sigma_{\left\lfloor a\right\rfloor v}} ^-{}="C"
& 
& \left\lfloor \rep{a}\right\rfloor v
\ar [rr] |-*+{\scriptstyle \sigma_{\left\lfloor \rep{a}\right\rfloor v}}
\ar@{} [];"C" |-{\sm =}
\ar@{} [ur] |(0.45){\sm =}
&& \rep{\left\lfloor a\right\rfloor v} 
}}
\] 
\end{example}

\subsection{Operated polygraphic resolutions from convergence}
This subsection presents the main result of this paper, Theorem~\ref{SSS:SquierPolygraphicResolution}, that constructs a polygraphic resolution for an $\Omega$-algebra from a convergent presentation.
Such a resolution \emph{à la Squier} is generated in each dimension $n\geq 2$ by the sources of the critical $n$-branchings of the presentation.

\subsubsection{Reduced and essential $\Omega$-monomials}\label{SSS:EssentialMonomial}
Let $\iota$ be an unital section of an \LO-$\omega$-polygraph $X$. An $\Omega$-monomial~$u$ of~$\lin{Z}$ is \emph{$\iota$-reduced} if $u=\rep{u}$. A non-$\iota$-reduced $\Omega$-monomial~$u$ is \emph{$\iota$-essential} if 
\begin{enumerate}
\item $u=xv$, where $x$ is a $0$-generator of~$X$ and $v$ is an $\iota$-reduced $\Omega$-monomial of~$\lin{Z}$,
\item $u=\left\lfloor w\right\rfloor  v$, where $w$ and $v$ are both $\iota$-reduced $\Omega$-monomials of~$\lin{Z}$.
\end{enumerate}

If~$\sigma$ is an $\iota$-contraction of~$X$, for~$n\geq 1$, an $n$-cell~$a$ of~$\lin{X}$ is \emph{$\sigma$-reduced} if it is an identity or in the image of~$\sigma$. If the $\iota$-contraction $\sigma$ is a right and bracketed, for~$n\geq 1$, a non-$\sigma$-reduced $n$-monomial~$a$ of~$\lin{X}$ is \emph{$\sigma$-essential} if 
\begin{enumerate}[resume]
\item $u=\alpha v$, where $\alpha$ is an $n$-generator of~$X$ and $v$ is a $\iota$-reduced $\Omega$-monomial of~$\lin{Z}$.
\end{enumerate}

\begin{lemma}
\label{L:RightAndBracketed}
Let $X$ be an \LO-$\omega$-polygraph and $\iota$ be a unital section of $X$. A right and bracketed $\iota$-contraction $\sigma$ of $X$ is
uniquely and entirely determined by its values on the $\iota$-essential $\Omega$-monomials of~$\lin{Z}$ and, for $n>1$, on the $\sigma$-essential $\Omega$-monomials of $\lin{X}$. 
\end{lemma}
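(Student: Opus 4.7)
The plan is to reduce, via two nested inductions, the computation of $\sigma$ on an arbitrary monomial to its prescribed values on essential monomials, showing in this way that any two right and bracketed $\iota$-contractions agreeing on essential data must coincide. The two main ingredients are Lemma~\ref{L:LemmaOfHomotopy}, which already determines $\sigma$ on all $n$-cells from its values on $n$-monomials once \eqref{E:HomotopyExchange} is known to hold (as it does for any homotopy), and a structural recursion on the breadth/depth of $\Omega$-monomials that iteratively applies the right relation \eqref{E:RightContraction} and the bracketed relation \eqref{E:BracketedContraction} to peel off an outermost factor.

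First I would treat dimension~$0$. Given an $\Omega$-monomial $u$ of $\lin{Z}$ with its unique breadth decomposition $u = u_1 \cdots u_p$ where each $u_j \in Z \sqcup \lfloor Z^\Omega\rfloor_\Omega$, I would split into three cases. If $u$ is $\iota$-reduced, then $\sigma_u = 1_u$ by the contraction property. If $u$ is $\iota$-essential, the value is given by hypothesis. Otherwise, I would apply \eqref{E:RightContraction} with $f = u_1$ and $g = u_2 \cdots u_p$, obtaining
\[
\sigma_u \: = \: u_1 \sigma_{u_2 \cdots u_p} \star_0 \sigma_{u_1 \rep{u_2 \cdots u_p}}.
\]
The first factor is handled by induction on breadth. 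In the second factor, $v \coloneq \rep{u_2 \cdots u_p}$ is $\iota$-reduced, so $u_1 v$ is either $\iota$-essential (when $u_1 \in Z$), or equals $\lfloor w\rfloor_\tau v$, in which case \eqref{E:BracketedContraction} reduces $\sigma_{u_1 v}$ to $\sigma_w$ (treated by a subsidiary induction on depth) and $\sigma_{\lfloor \rep{w}\rfloor_\tau v}$, which is $\iota$-essential.

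Next, for dimension $n \geq 1$, I would invoke Lemma~\ref{L:LemmaOfHomotopy} to reduce to $n$-monomials. For an $n$-monomial $m = q|_\alpha$, the breadth decomposition of $s_0(m)$ lifts canonically to a decomposition $m = y_1 \cdots y_{i-1} \cdot x \cdot y_{i+1} \cdots y_p$ where the $y_j \in Z \sqcup \lfloor Z^\Omega\rfloor_\Omega$ are 0-cells and $x$ is either the $n$-generator $\alpha$ itself or a bracket $\lfloor m'\rfloor_\tau$ with $m'$ a strictly smaller $n$-monomial. Iterating \eqref{E:RightContraction} peels off 0-cell factors, producing either terms $y \sigma_{m''}$ (recursion on a smaller $n$-monomial) or terms $\sigma_{y \rep{m''}}$; the latter, since $\rep{m''} = 1_{\rep{s_0(m'')}}$, collapses via the identity clause $\eta_{1_a} = 1_{\eta_a}$ to a dimension-$0$ computation already handled. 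This reduces everything to $\sigma$ evaluated either on $\alpha v$ with $v$ reduced, which is $\sigma$-essential, or on $\lfloor m'\rfloor_\tau v$, which by \eqref{E:BracketedContraction} further reduces to $\sigma_{m'}$ (inner induction on depth) plus a $0$-dimensional contribution.

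The main obstacle I foresee is organising the correct well-founded order on $n$-monomials so that the interplay between the breadth splitting, the depth recursion inside brackets, and the subsidiary reductions that fall back to dimension~$0$ all terminate simultaneously; a lexicographic combination of $n$-monomial dimension, bracket depth, and breadth should suffice. One must also verify carefully that every instance of $\sigma_{u\rep{v}}$, $\sigma_{\lfloor\rep{w}\rfloor_\tau v}$ or $\lfloor \sigma_h\rfloor_\tau v$ appearing in the recursion has matching $0$-sources and $0$-targets so that the $\star_0$-compositions in \eqref{E:RightContraction} and \eqref{E:BracketedContraction} are always well-defined, and that the recursion commits to the \emph{leftmost} factor consistently, so that no circular dependency arises between the two reduction rules.
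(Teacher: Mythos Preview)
Your proposal is correct and follows essentially the same route as the paper: reduce to $n$-monomials via Lemma~\ref{L:LemmaOfHomotopy}, then recursively apply \eqref{E:RightContraction} and \eqref{E:BracketedContraction} along a breadth/depth induction to reach essential monomials. The paper organises the case split slightly differently (it distinguishes $u=yv$ with $v$ non-$\iota$-reduced versus $u=\lfloor w\rfloor v$ with $w$ non-$\iota$-reduced, and for $n$-monomials writes $w=u\alpha v$ or $w=u\lfloor q|_\alpha\rfloor v$ directly), but the underlying recursion is the same as your left-factor peeling.

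One point worth flagging: your remark that \eqref{E:HomotopyExchange} ``is known to hold (as it does for any homotopy)'' is correct for the uniqueness direction, which is literally what the lemma asserts. The paper's proof, however, closes by \emph{verifying} \eqref{E:HomotopyExchange} for the recursively defined values (deferring to \cite{GuiraudHoffbeckMalbos19,MalbosRen23}). That extra step is not needed for uniqueness, but it is implicitly required later in Proposition~\ref{T:ResolutionContraction} and Theorem~\ref{T:Main conclusion}, where the lemma is invoked to \emph{construct} a contraction from prescribed essential values: there one must know that the recursion produces monomial values compatible with the exchange relation, so that Lemma~\ref{L:LemmaOfHomotopy} yields an actual homotopy. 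If you intend your argument to support those later constructions as well, you should add this verification; for the lemma as stated, your argument suffices.
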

\begin{proof}
By Lemma~\ref{L:LemmaOfHomotopy}, it suffices to check that the values of $\sigma$ on the $\iota$-essential $\Omega$-monomials and on $\sigma$-essential $\Omega$-monomials determine its values on other $\Omega$-monomials and $n$-monomials, and that equation \eqref{E:HomotopyExchange} holds.
If $u$ is a non-$\iota$-essential $\Omega$-monomial, we consider three cases
\begin{enumerate}
\item $u=1$. Then $\sigma_1 = 1$ is forced since $1$ is $\iota$-reduced.
\item $u=yv$, where $y$ is either a $0$-generator of $X$ or an $\iota$-reduced monomial $\lfloor w \rfloor$, and $v$ is a non-$\iota$-reduced $\Omega$-monomial of $\lin{Z}$. Then \eqref{E:RightContraction} imposes $\sigma_{yv} = y\sigma_v \star_0 \sigma_{y\rep{v}}$. We proceed by induction on the length of $v$ to define $\sigma_v$.
\item $u=\left\lfloor w\right\rfloor v$, where $w$ is a non-$\iota$-reduced $\Omega$-monomial of $\lin{X}$ and $v$ is an arbitrary $\Omega$-monomial. Then \eqref{E:BracketedContraction} imposes $\sigma_{\left\lfloor w\right\rfloor v} = \left\lfloor \sigma_w \right\rfloor v \star_0 \sigma_{\left\lfloor \rep{w}\right\rfloor v}$. When $v$ is $\iota$-reduced, $\left\lfloor \rep{w}\right\rfloor v$ is also $\iota$-reduced. We define~$\sigma_w$ by induction on the number of operators in $w$ and its length. When $v$ is not $\iota$-reduced, we define $\sigma_{\left\lfloor \rep{w}\right\rfloor v}$ based on the previous case.
\end{enumerate}
If $w$ is an $n$-monomial, then $w$ can be written as $(p_1\circ p_2 \circ \cdots \circ p_n)|_\alpha$,
where  $\alpha$ is a 1-generator with 0-source $a$ and $p_k\coloneq\left\lfloor u_k \square  v_k\right\rfloor_{\tau_k}\in Z^\Omega[\square]$ with  $u_k, v_k \in Z^\Omega$. We distinguish between two cases:
\begin{enumerate}
\item $w = u\alpha v$. Then \eqref{E:RightContraction} imposes $\sigma_{u\alpha v} = u a \sigma_v \star_0 u\sigma_{\alpha \rep{v}} \star_0 \sigma_{u \rep{a v}}$. We proceed by induction on the length of $u$ and $v$ to define $\sigma_{u \rep{a v}}$ and $\sigma_v$.
\item $w = u\left\lfloor \left.q\right|_{\alpha}\right\rfloor v$. Then \eqref{E:RightContraction} imposes $\sigma_{u \left\lfloor \left.q\right|_{\alpha}\right\rfloor v} = u \left\lfloor \left.q\right|_{a}\right\rfloor \sigma_v \star_0 u\sigma_{\left\lfloor \left.q\right|_{\alpha}\right\rfloor \rep{v}} \star_0 \sigma_{u \rep{\left\lfloor \left.q\right|_{a}\right\rfloor v}}$. We proceed by induction on the length of $u$ and $v$ to define $\sigma_v$ and $\sigma_{u \rep{\left\lfloor \left.q\right|_{a}\right\rfloor v}}$.
As for $\sigma_{\left\lfloor \left.q\right|_{\alpha}\right\rfloor \rep{v}}$,  \eqref{E:BracketedContraction} imposes $\sigma_{\left\lfloor \left.q\right|_{\alpha}\right\rfloor \rep{v}} = \left\lfloor \sigma_{\left.q\right|_{\alpha}}\right\rfloor \rep{v} \star_0 \sigma_{\left\lfloor \rep{\left.q\right|_{a}}\right\rfloor \rep{v}}$. We then proceed by induction on the number of operators and the length of $\left.q\right|_{\alpha}$ to define $\sigma_{\left.q\right|_{\alpha}}$.
\end{enumerate}
To verify \eqref{E:HomotopyExchange}, we refer to the case of associative algebras in {\cite[Lemma5.2.5]{GuiraudHoffbeckMalbos19}} and that of the shuffle operad in {\cite[Lemma5.1.6]{MalbosRen23}}, as their proofs do not differ significantly.
\end{proof}

\begin{proposition}
\label{T:ResolutionContraction}
Let $X$ be an \LO-$\omega$-polygraph with a fixed unital section $\iota$. Then $X$ is a polygraphic
resolution of the $\overline{X}$ if, and only if, it admits a right and bracketed $\iota$-contraction.
\end{proposition}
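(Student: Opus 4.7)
The plan is to adapt the strategy used for associative algebras in \cite[Thm.~5.2.6]{GuiraudHoffbeckMalbos19} and for shuffle operads in \cite[Thm.~5.1.7]{MalbosRen23}, with the bracketed axiom \eqref{E:BracketedContraction} providing the new ingredient needed to accommodate the family of operators.

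For the easier \emph{if} direction, I would assume that $\sigma$ is a right and bracketed $\iota$-contraction and fix an $n$-sphere $(f,g)$ in $\lin{X}$. Since $f$ and $g$ are parallel and $\sigma$ is a homotopy from $\id_{\lin{X}}$ to $\iota\pi$, the $(n+1)$-cells $\sigma_f$ and $\sigma_g$ have, by \eqref{E:SourceHomotopy} and \eqref{E:TargetHomotopy}, respective sources $f$ and $g$ together with a common target determined by the shared $s_0,\dots,s_{n-1}$-iterated faces of $f$ and $g$. The composite $\sigma_f \star_n \sigma_g^{-}$, where $\sigma_g^{-}$ is the inverse provided by Proposition~\ref{L:PropertiesOmegaAlgebras}, is then an $(n+1)$-cell of $\lin{X}[X_{n+1}]$ from $f$ to $g$, which witnesses the acyclicity of $X_{n+1}$. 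At dimension zero, the acyclicity of $X_1$ follows from the fact that $\sigma_u : u \to \rep{u}$ already connects any $0$-cell to its chosen representative.

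For the \emph{only if} direction, I would construct $\sigma$ inductively on the dimension. By Lemma~\ref{L:RightAndBracketed}, it suffices to specify $\sigma$ on $\iota$-essential $\Omega$-monomials and on $\sigma$-essential $n$-monomials; the axioms \eqref{E:RightContraction} and \eqref{E:BracketedContraction} then propagate the definition coherently. At dimension zero, every $\iota$-essential $\Omega$-monomial $u$ satisfies $\pi(u) = \pi(\rep{u})$, so that $(u,\rep{u})$ is a sphere at the presentation level whose acyclicity provides the required $1$-cell $\sigma_u : u \to \rep{u}$. Inductively, given a $\sigma$-essential $n$-monomial $\alpha v$ with $\alpha \in X_n$ and $v$ an $\iota$-reduced $\Omega$-monomial, the formulas \eqref{E:SourceHomotopy}--\eqref{E:TargetHomotopy} specify an $n$-sphere whose two components involve only previously-defined values of $\sigma$; acyclicity of $X_{n+1}$ then produces $\sigma_{\alpha v}$.

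The main obstacle lies in verifying that this inductive extension is coherent and, in particular, that it satisfies the exchange relation \eqref{E:HomotopyExchange} required by Lemma~\ref{L:LemmaOfHomotopy}. This calls for a double induction on the depth $\operatorname{dep}$ and breadth $\operatorname{bre}$ of the underlying $\Omega$-monomial, performed simultaneously with the definition of $\sigma$ itself. The bracketed axiom introduces genuinely new cases compared to the associative setting, since an $\Omega$-monomial of the form $\lfloor w\rfloor_\tau v$ where $w$ is itself a product admits decompositions both as a product and via the bracket, and one must check that the two ways of applying \eqref{E:RightContraction} and \eqref{E:BracketedContraction} yield the same value of $\sigma$. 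The remaining conditions, namely $\sigma_a = 1_a$ on the image of $\iota$ and of $\sigma$, and the identity axiom $\sigma_{1_a} = 1_{\sigma_a}$, are built into the construction by restricting attention to essential monomials as in Lemma~\ref{L:RightAndBracketed}.
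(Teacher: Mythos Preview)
Your overall strategy matches the paper's, but the \emph{if} direction contains a genuine error. You claim that $\sigma_f$ and $\sigma_g$ have sources $f$ and $g$ respectively, but \eqref{E:SourceHomotopy} with $F=\id_{\lin{X}}$ gives
\[
s(\sigma_f) \:=\: f \star_0 \sigma_{t_0(f)} \star_1 \cdots \star_{n-1} \sigma_{t_{n-1}(f)},
\]
which is not $f$ unless all the $\sigma_{t_k(f)}$ are identities. Consequently $\sigma_f \star_n \sigma_g^-$ is an $(n+1)$-cell from $f \star_0 \sigma_{t_0(f)} \star_1 \cdots \star_{n-1} \sigma_{t_{n-1}(f)}$ to the analogous expression with $g$, not from $f$ to $g$. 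The paper repairs this by composing on the right with the inverses of the extra factors: since $t_k(f)=t_k(g)$ for all $k<n$, the cell
\[
\sigma_f \star_n \sigma_g^- \star_{n-1} \sigma_{t_{n-1}(f)}^- \star_{n-2} \cdots \star_0 \sigma_{t_0(f)}^-
\]
is well defined and has source $f$ and target $g$. This is the missing step.

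For the \emph{only if} direction, your argument is essentially the paper's, but you are doing more work than necessary in the final paragraph. Lemma~\ref{L:RightAndBracketed} already guarantees that any assignment of values on essential monomials (with the correct prescribed sources and targets) extends uniquely to a right and bracketed $\iota$-contraction; the exchange relation \eqref{E:HomotopyExchange} and the compatibility between \eqref{E:RightContraction} and \eqref{E:BracketedContraction} are verified once and for all in the proof of that lemma, not here. So once acyclicity produces the cells $\sigma_{xv}$, $\sigma_{\lfloor w\rfloor v}$, and $\sigma_{\alpha v}$, you are done.
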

\begin{proof}
Assume that $X$ is a polygraphic resolution of $\overline{X}$. We define a right and bracketed $\iota$-contractions using Lemma~\ref{L:RightAndBracketed}. 
For an essential monomial $xv$, both $xv$ and $\rep{xv}$ map to the same element in $\lin{Z}$, so there exists a 1-cell $\sigma_{xv}: xv \to \rep{xv}$ in $\lin{X}$.
Similarly, for an essential monomial of the form $\left\lfloor w \right\rfloor v$, $\sigma_{\left\lfloor w \right\rfloor v}$ is defined analogously.
Assume that $\sigma$ is defined on all~$n$-cells of $\lin{X}$. We now extend it to the~$\sigma$-essential $n$-monomial $\alpha v$. By hypothesis, $s(\sigma_{\alpha v})$ and $t(\sigma_{\alpha v})$ are parallel, ensuring the existence of an $(n+1)$-cell $\sigma_{\alpha v}$.

Conversely, for $n \geq 1$, let $\sigma$ be a right and bracketed $\iota$-contraction, and let $f$ and $g$ be parallel $n$-cells in $\lin{X}$. We show that $t(\sigma_f) = \sigma_{s(f)} = \sigma_{s(g)} = t(\sigma_g)$, ensuring that the $(n+1)$-cell $\sigma_f \star_n \sigma_g^-$ is well-defined.
The fact that $t_k(f) = t_k(g)$ for all $0 \leq k < n$ implies that
\[
\sigma_f \star_n \sigma_g^- \star_{n-1} \sigma_{t_{n-1}(f)}^- \star_{n-2} \cdots \star_0 \sigma_{t_0(f)}^-
\]
is a well-defined $n$-cell of $\lin{X}$ with the source $f$ and target $g$. Hence, $X_{n+1}$ is acyclic.
\end{proof}

In the following, we assume that each 0-generator of $X$ is a normal form; if not, we reduce this polygraph to a smaller one using a collapsing mechanism from {\cite[Subsec.5.3]{GuiraudHoffbeckMalbos19}}.

\subsubsection{Operated polygraphic resolutions}
\label{SSS:SquierPolygraphicResolution}
Let $X$ be a reduced convergent \LO-1-polygraph. 
We define $\mathrm{Sq}(X)$ as the family of generators $\left(\mathrm{Sq}_n(X)\right)_{n \geqslant 0}$, where
\begin{enumerate}[label=\bf\roman*), ref=\theenumi)]
\setlist[enumerate,2]{label=\bf\alph*), ref=\theenumi-\theenumii)}
\item $\mathrm{Sq}_0(X) = Z$,
\item  $\mathrm{Sq}_1(X)$ is the set of tuples $(u_1,u_2)$, written $u_1 | u_2$, satisfying one of the following two cases
\begin{enumerate}
\item\label{I:Condition1OfSq1} $u_1\in Z$, or there exists $u_0\in Z^\Omega$ such that $u_1=\left\lfloor u_0 \right\rfloor \in \Nf(X)$, and $u_2\in \Nf(X)$,
\item \label{I:Condition1OfSq2}$u_1 = \epsilon$ and $u_2 = \left\lfloor u_0 \right\rfloor$, where $u_0\in \Nf(X)$,
\end{enumerate}
plus the following condition
\begin{enumerate}[resume]
\item\label{I:Condition1OfSq3} $u_1 u_2$ is reducible, and every proper left-factor of $u_1 u_2$ is a normal form,
\end{enumerate}
\item  For $n \geq 2$, $\mathrm{Sq}_n(X)$ is the set of tuples $(u_1, \ldots, u_{n+1})$, written $u_1 | \cdots | u_{n+1}$, such that the following conditions hold
\begin{enumerate}
\item $(u_1,u_2) \in \mathrm{Sq}_1(X)$ satisfying the cases \ref{I:Condition1OfSq1} and \ref{I:Condition1OfSq3},
\item $u_i\in \Nf(X)$, for every $i>2$,
\item For every $2 \leq i< n+1$, $u_i u_{i+1}$ is reducible, and every proper left-factor of $u_i u_{i+1}$ is a normal form.
\end{enumerate}
\end{enumerate}

\begin{theorem}
\label{T:Main conclusion}
Let $X$ be a reduced convergent \LO-$1$-polygraph. There exists a unique polygraphic structure on $\mathrm{Sq}(X)$ and a unique unital section $\iota$, as well as a right and bracketed $\iota$-contraction $\sigma$ of $\mathrm{Sq}(X)$, such that $\iota\pi(u) = \widehat{u}$ for every $u\in Z^\Omega$, and
\begin{eqn}{equation}
\label{E:ValueOfSigamOnEssentialMonomial}
\sigma_{\left(u_1|\cdots| u_n\right) u_{n+1}} = \begin{cases}
u_1|\cdots| u_{n+1} & \text{if } u_1|\cdots| u_{n+1} \in \mathrm{Sq}_n(X), \\
1_{\left(u_1|\cdots| u_n\right) u_{n+1}} & \text{if } u_nu_{n+1}\in \Nf(X),
\end{cases}
\end{eqn}
for all $(n-1)$-generators $u_1 | \cdots | u_n$ in $\mathrm{Sq}_{n-1}(X)$, and $u_{n+1} \in \Nf(X)$ with $n \geq 1$. When $n=1$, we allow writing $u_1 = \brca{u_0} \in \Nf(X)$. 
We also set
\begin{eqn}{equation}
\label{E:ExtraCases}
\sigma_{\brca{u_0}}=\epsilon|\brca{u_0} \quad \text{and} \quad \sigma_{(\epsilon|\brca{u_0})u_3}=1_{(\epsilon|\brca{u_0})u_3},
\end{eqn}
for all reducible $\Omega$-monomial $\brca{u_0}$ with $u_0\in \Nf(X)$, and  all 1-generators $\epsilon|\brca{u_0}$ with $u_3\in \Nf(X)$.
Then this structure makes $\mathrm{Sq}(X)$ a polygraphic resolution of $\lin{Z}$.
\end{theorem}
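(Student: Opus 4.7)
The plan is to construct the polygraphic structure on $\mathrm{Sq}(X)$ and the contraction $\sigma$ simultaneously by induction on the dimension, and then invoke Proposition~\ref{T:ResolutionContraction} to obtain acyclicity. Since $X$ is convergent, Proposition~\ref{P:LinearBasisConvergentPolygraphs} implies that $\Nf(X)$ is a $\bk$-linear basis of $\overline{X}$, and the requirement $\iota\pi(u) = \widehat{u}$ forces the unique unital section sending each class to its normal-form representative. In dimension one, I would assign to each $1$-generator $u_1 | u_2 \in \mathrm{Sq}_1(X)$ the source $u_1 u_2$ and target $\widehat{u_1 u_2}$; condition \textbf{(c)} in the definition of $\mathrm{Sq}_1(X)$ guarantees these are distinct $0$-cells. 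The prescribed values of $\sigma$ on the $\iota$-essential monomials $xv$ and $\brca{u_0}v$, together with \eqref{E:ExtraCases} for the case $u_1 = \epsilon$, fix $\sigma$ via \eqref{E:ValueOfSigamOnEssentialMonomial}, and Lemma~\ref{L:RightAndBracketed} then extends $\sigma$ uniquely to all reducible $\Omega$-monomials through the right and bracketed relations \eqref{E:RightContraction}--\eqref{E:BracketedContraction}.

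For $n \geq 2$, assuming $\mathrm{Sq}_k(X)$ and $\sigma$ on $k$-cells have been built for $k\leq n-1$, I define the source and target of each new $n$-generator $u_1 | \cdots | u_{n+1}$ to be the $n$-cells of the free $(n-1)$-algebra on $\mathrm{Sq}_{\leq n-1}(X)$ prescribed by the homotopy formulas \eqref{E:SourceHomotopy}--\eqref{E:TargetHomotopy} applied to the $(n-1)$-cell $\sigma_{(u_1 | \cdots | u_{n-1}) u_n}$ with $0$-source $(u_1|\cdots|u_n) u_{n+1}$. Their parallelism follows from the globularity of $\sigma$ in lower dimensions, so $\mathrm{Sq}_n(X)$ is a well-defined cellular extension. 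The prescribed values \eqref{E:ValueOfSigamOnEssentialMonomial} on the $\sigma$-essential $n$-monomials $\alpha v$ (with $\alpha$ an $n$-generator and $v$ reduced) then determine $\sigma$ on every $n$-cell by Lemma~\ref{L:RightAndBracketed}, provided the exchange relation \eqref{E:HomotopyExchange} is checked at each stage. This verification is a direct calculation parallel to {\cite[Lem.~5.2.5]{GuiraudHoffbeckMalbos19}}: the two sides of \eqref{E:HomotopyExchange} describe two ways of reducing the same triple product, reconciled via the bimodule relation \eqref{E:LinearExchangeRelation}.

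Once this right-bracketed $\iota$-contraction is built, Proposition~\ref{T:ResolutionContraction} yields immediately that $\mathrm{Sq}(X)$ is a polygraphic resolution of $\overline{X}$. The main obstacle is the interaction of the bracketed rule \eqref{E:BracketedContraction} with the right rule \eqref{E:RightContraction} when a reducible $\brca{w}$ appears inside a longer monomial, and in particular the novel case \eqref{E:ExtraCases} with $u_1 = \epsilon$ and $\brca{u_0}$ reducible while $u_0$ is itself a normal form --- a phenomenon absent from the associative setting treated in \cite{GuiraudHoffbeckMalbos19}. The induction must therefore proceed simultaneously on the pair consisting of the number of operator occurrences and the breadth of the monomial, and it uses crucially that $X$ is reduced so that every proper left-factor of an essential monomial is a normal form; this ensures the unambiguous recursive evaluation of $\sigma$ on every $n$-cell of $\lin{X}$ and that the two expansions of $\sigma_{\brca{w}v}$ obtained by first applying \eqref{E:BracketedContraction} and then \eqref{E:RightContraction}, versus the reverse order, agree on the nose rather than merely up to higher cells.
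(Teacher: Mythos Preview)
Your overall strategy---simultaneous induction building $\mathrm{Sq}_n(X)$ and $\sigma$, then invoking Proposition~\ref{T:ResolutionContraction}---matches the paper's, and you correctly flag the bracketed case \eqref{E:ExtraCases} as the genuinely new feature over the associative setting.

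There is, however, a real gap in your treatment of the $\sigma$-essential monomials. You write that the prescribed values \eqref{E:ValueOfSigamOnEssentialMonomial} ``on the $\sigma$-essential $n$-monomials $\alpha v$ \ldots\ then determine $\sigma$ on every $n$-cell by Lemma~\ref{L:RightAndBracketed}.'' But \eqref{E:ValueOfSigamOnEssentialMonomial} only prescribes $\sigma$ on those essential monomials $\underline{u}\,u_{n+1}=(u_1|\cdots|u_n)u_{n+1}$ falling into one of two cases: either the tuple $u_1|\cdots|u_{n+1}$ is itself a generator of $\mathrm{Sq}_n(X)$, or $u_n u_{n+1}$ is a normal form. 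There is a third family---where $u_n u_{n+1}$ is reducible yet some proper left-factor of it is already reducible, so condition \textbf{c)} of \eqref{SSS:SquierPolygraphicResolution} fails and $u_1|\cdots|u_{n+1}\notin\mathrm{Sq}_n(X)$---on which the prescription says nothing. Lemma~\ref{L:RightAndBracketed} needs $\sigma$ specified on \emph{all} essential monomials, so at this point your construction is not yet well-defined.

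The paper resolves this with a forward-looking trick that is the real engine of the proof: factor $u_{n+1}=v_{n+1}w_{n+1}$ so that $\underline{u}\,|\,v_{n+1}\in\mathrm{Sq}_n(X)$ (possible precisely because $X$ is reduced). Then the identity prescription one level up, $\sigma_{(\underline{u}|v_{n+1})w_{n+1}}=1$ (second case of \eqref{E:ValueOfSigamOnEssentialMonomial}, since $v_{n+1}w_{n+1}=u_{n+1}\in\Nf(X)$), forces the source and target of this $(n+1)$-cell to coincide. Expanding both via the homotopy formulas \eqref{E:SourceHomotopy}--\eqref{E:TargetHomotopy} yields an equation among $n$-cells in which $\sigma_{\underline{u}\,u_{n+1}}$ appears linearly and all other terms are already known by induction; one then \emph{solves} this equation for $\sigma_{\underline{u}\,u_{n+1}}$. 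Your closing remarks about ``unambiguous recursive evaluation'' and proper left-factors gesture in this direction, but the mechanism---that a constraint imposed at dimension $n+1$ is what determines the missing values at dimension $n$---is not visible in your sketch and is essential to the uniqueness claim as well as to existence. (There are also some off-by-one slips in your description of sources and targets: the source and target of an $n$-generator are $(n{-}1)$-cells, not $n$-cells, and the relevant homotopy formulas are those for $\sigma_{(u_1|\cdots|u_n)u_{n+1}}$, not for $\sigma_{(u_1|\cdots|u_{n-1})u_n}$.)
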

\begin{proof}
When  \eqref{E:ValueOfSigamOnEssentialMonomial} holds, the source and target maps of $\mathrm{Sq}(X)$, except for the 1-generators $\epsilon|\brca{u_0}$, are determined by the first case. Writing $\underline{u} = u_1 | \cdots | u_n$ for short, we obtain
\[
s(u_1|\cdots|u_{n+1})=s(\sigma_{\underline{u}u_{n+1}})=\underline{u}u_{n+1}\star_0 \sigma_{t_0(\underline{u})u_{n+1}}\star_1\cdots\star_{n-2} \sigma_{t_{n-2}(\underline{u})u_{n+1}},
\]
and
\[
t(u_1|\cdots|u_{n+1})=t(\sigma_{\underline{u}u_{n+1}})=
\begin{cases}
\rep{u_1u_2}
&\text{if~$n=1$,}
\\
\sigma_{s(\underline{u})u_{n+1}}
&\text{otherwise.}
\end{cases}
\]
We determine $s(\epsilon|\brca{u_0}) = \brca{u_0}$ and $t(\epsilon|\brca{u_0}) = \rep{\brca{u_0}}$ by \eqref{E:ExtraCases}.
Next, we define the values of $\sigma$ on $n$-cells of $\lin{\mathrm{Sq}(X)}$. According to Lemma~\ref{L:RightAndBracketed}, it suffices to define $\sigma$ on the $\iota$-essential $\Omega$-monomials and $\sigma$-essential $n$-monomials.

For the $\iota$-essential $\Omega$-monomial $\left\lfloor u_0\right\rfloor u_3$, where $u_0, u_3\in\Nf(X)$ but $\left\lfloor u_0\right\rfloor$ is reducible. If $u_3$ is identity, we have $\sigma_{\left\lfloor u_0\right\rfloor}=\epsilon|\left\lfloor u_0\right\rfloor$. 
Otherwise, \eqref{E:ExtraCases} reads $\sigma_{(\epsilon|\brca{u_0})u_3}=1_{(\epsilon|\brca{u_0})u_3}$, that is the source and target of~$\sigma_{(\epsilon|\brca{u_0})u_3}$ must be equal, giving the value of $\sigma$ on $\left\lfloor u_0\right\rfloor u_3$:
\[
\sigma_{\left\lfloor u_0\right\rfloor u_3}=t(\sigma_{(\epsilon|\brca{u_0})u_3})=s(\sigma_{(\epsilon|\brca{u_0})u_3})=(\epsilon|\brca{u_0})u_3 \star_0 \sigma_{\rep{\left\lfloor u_0\right\rfloor}u_3}.
\]

For the $\iota$-essential monomial $u_1u_2$, where $u_1\in Z$ or $u_1=\brca{u_0}\in\Nf(X)$, $u_2\in\Nf(X)$, and $u_1u_2$ is reducible. If $u_1|u_2\in\mathrm{Sq}_1(X)$, then \eqref{E:ValueOfSigamOnEssentialMonomial} imposes $\sigma_{u_1u_2}=u_1|u_2$. If not, there exists a proper factorization $u_2=v_2w_2$ such that $u_1|v_2\in\mathrm{Sq}_1(X)$. We have $\sigma_{(u_1|v_2)w_2}=1_{(u_1|v_2)w_2}$ by the second case in \eqref{E:ValueOfSigamOnEssentialMonomial}, that is 
\[
\sigma_{u_1u_2}
= t(\sigma_{(u_1| v_2)w_2})
= s(\sigma_{(u_1| v_2)w_2})
= (u_1| v_2)w_2 \star_0 \sigma_{\rep{u_1v_2}w_2}.
\]

Now, consider $n\geq 2$. For the $\sigma$-essential monomial $\underline{u}u_{n+1}$, where $\underline{u}$ is a $(n-1)$-generator of $\mathrm{Sq}(X)$, and $u_{n+1}\in\Nf(X)$. 
We distinguish four cases. First, for $n=1$, if $\underline{u}u_3=(\epsilon|\brca{u_0})u_3$, then \eqref{E:ExtraCases} reads $\sigma_{(\epsilon|\brca{u_0})u_3}=1_{(\epsilon|\brca{u_0})u_3}$.
Second, if $\underline{u}| u_{n+1}\in \mathrm{Sq}(X)$, then \eqref{E:ValueOfSigamOnEssentialMonomial} imposes $\sigma_{\underline{u}u_{n+1}}=\underline{u}| u_{n+1}$. Third, if~$u_n u_{n+1}\in\Nf(X)$, then \eqref{E:ValueOfSigamOnEssentialMonomial} imposes $\sigma_{\underline{u}u_{n+1}}=1_{\underline{u}u_{n+1}}$. Otherwise, there exists a proper factorization $u_{n+1}=v_{n+1}w_{n+1}$ such that $\underline{u}| v_{n+1}\in\mathrm{Sq}(X)$. In that case, \eqref{E:ValueOfSigamOnEssentialMonomial} implies that the source and the target of~$\sigma_{(\underline{u}| v_{n+1})w_{n+1}}$ are equal. On the one hand, we have
\[
s(\sigma_{(\underline{u}| v_{n+1})w_{n+1}})
= (\underline{u}| v_{n+1}) w_{n+1} \star_0 \sigma_{t_0(\underline{u}|
v_{n+1})w_{n+1}} \star_1 \cdots \star_{n-2} \sigma_{t_ {n-2}(\underline{u}| v_{n+1})
w_{n+1}},
\]
and, on the other hand, we obtain
\[
t(\sigma_{(\underline{u}| v_{n+1})w_{n+1}})
= \sigma_{s(\underline{u}| v_{n+1}) w_{n+1}} 
= \sigma_{s(\sigma_{\underline{u} v_{n+1}}) w_{n+1}} 
= \sigma_{\underline{u}u_{n+1} \star_0 \sigma_{t_0(\underline{u})v_{n+1}} w_{n+1}
\star_1 \cdots\star_{n-2} \sigma_{t_{n-2}(\underline{u})v_{n+1}} w_{n+1}}.
\]
Since $a \star_k b = a + b - t_k(a)$ and $\sigma$ commutes with $t$, we can rewrite the latter expression, by induction on~$n$, as a linear composition of $n$-cells containing $\sigma_{\underline{u}u_{n+1}}$, $\sigma_{\sigma_{t_{n-2}(\underline{u})v_{n+1}} w_{n+1}}$, and other lower-dimensional cells. Thus, $\sigma_{\underline{u}u_{n+1}}$ can be determined from the relation $s(\sigma_{(\underline{u}| v_{n+1})w_{n+1}}) = t(\sigma_{(\underline{u}| v_{n+1})w_{n+1}})$.

Finally,  by Proposition~\ref{T:ResolutionContraction}, we conclude that $\mathrm{Sq}(X)$ is a polygraphic resolution of $\overline{X}$.
\end{proof}

\begin{example}
Consider the \LO-1-polygraph $X^I$ from Example~\ref{E:InvolutiveAlgebra}. This polygraph is convergent, but not reduced.
In order to construct $\Nf(X^I)$, we consider
\[
\Phi_I = \bigcup_{n \geq 0} \Phi_n,
\quad\text{with}\quad
\Phi_0 = \left(Z\cup \brca{Z^\ast} \right)^\ast
\quad \text{and} \quad 
\Phi_n = \left(Z\cup\left\lfloor\left( \Phi_{n-1}\right)_{\geq 2}\right\rfloor\right)^\ast
\;\text{for $n \geq 1$,}
\]
where $\left( \Phi_{n-1}\right)_{\geq 2}$ denotes the set of all $\Omega$-monomials $u$ in $\Phi_{n-1}$ satisfying $\operatorname{bre}(u) \geq 2$.  
Note that the construction of $\Phi_I$ excludes all $\Omega$-monomials of the form $\left.q\right|_{\brca{\brca{u}}}$ in $Z^\Omega$, so we have $\Nf(X^I) = \Phi_I$.
We then present the reduced polygraph $\widetilde{X}^I = (Z, \widetilde{X}_1^I)$, which is Tietze equivalent to $X^I$, where
\[
\widetilde{X}_1^I\coloneq\{\beta_u:\brca{\brca{u}}\fl u \mid u\in(\Phi_I)_{n \geq 2} \text{ or } u\in Z\}.
\]
Thus, the $\Omega$-algebra presented by $\widetilde{X}^I$ has the polygraphic resolution $\mathrm{Sq}(\widetilde{X}^I)$, where
\begin{enumerate}
\item $\mathrm{Sq}_0(\widetilde{X}^I)=Z$,
\item $\mathrm{Sq}_1(\widetilde{X}^I)$ has the 1-generators $\epsilon |\brca{\brca{u}}:\brca{\brca{u}}\fl u, \text{ for all } u\in  (\Phi_I)_{n \geq 2} \text{ or } u\in $Z,
\item for every $n>1$, $\mathrm{Sq}_n(\widetilde{X}^I)$ is empty.
\end{enumerate}
\end{example}

\section{Examples of resolutions of operated algebras}
\label{S:ExampleOfResolutionOfOmegaAlgebras}

In this final section, we apply the above constructions to some classical free $\Omega$-algebras, including free Rota-Baxter algebras \cite{BokutChenQiu, KuruschGuo}, free differential algebras \cite{BokutChenQiu,LiGuo,
GuoSitZhang}, and free differential Rota-Baxter algebras \cite{GKeigher08,BokutChenQiu,LQQZ}.
 
\subsection{Polygraphic resolutions of free Rota-Baxter algebras}
\label{SS:RotaBaxterResolutions}
This subsection presents a polygraphic resolution of the free Rota-Baxter algebra $\RBA{\lambda}{Z}$ on $Z$.

\subsubsection{Normal forms} 
\label{SSS:PolygraphForRB}
The algebra $\RBA{\lambda}{Z}$ is presented by the  \LO-1-polygraph $X^P$ with 
\[
X_1^P \: \coloneq \: \big\{ \alpha[u,v]: P(u)P(v)\rightarrow P(P(u)v)+P(uP(v))+\lambda P(uv)
\; |\; u,v\in Z^\Omega\,\big\}.
\]  
We set $N=\mathbb{Z}^2$ and define a derivation $d : Z^\Omega \to N$ by  
\[
d(u) = \bigg(\operatorname{deg}_\Omega(u), \sum_{P|u} \big(\operatorname{deg}_\Omega(u)-\operatorname{deg}_\Omega(P)\big) \bigg),
\]
for every $u\in Z^\Omega$, where  $P|u$ denotes each occurrence of the operator $P$ in $u$, and $\operatorname{deg}_\Omega(P)$  counts the number of operators inside the operator $P$.
For instance, we have $d(P(1))=(1,1)$, $d (P(x)P(y) )=(2,4)$, and $d (P (P(x)y ) ) =(2,3)$ for $x,y\in Z$.
For every $(m_1,m_2)\in N$ and $v\in Z^\Omega$ with $\operatorname{deg}_\Omega(v)=n$, we~set
\[
v\cdot(m_1,m_2)=(m_1 ,m_2)\cdot v=(m_1,m_2+n m_1), 
\quad
P((m_1, m_2))
=(m_1+1, m_2+m_1+1).
\]
By definition $d$ satisfies \eqref{E:Derivation}. By equipping $d(Z^\Omega)$ with a monotone  lexicographic order, we ensure that $d(P(u)v)$, $d(uP(v))$ and $d(uv)$ are all less than $d(P(u)P(v))$ for any $u, v \in Z^\Omega$, with $d(1) = (0,0)$ as the minimal element.
Thus,  $X^P$ is terminating.
There are three families of critical branchings in~$X^P$:
\begin{enumerate}
\item $ (P(u)\alpha[v,w], \alpha[u,v]P(w) )$ with the source $P(u)P(v)P(w)$, 
\item $ (P ( \left.q\right |_{\alpha[u,v]} )P(w), \alpha [ \left.q\right |_{P(u)P(v)}, w ] )$ with the source  $P ( \left.q\right |_{P(u)P(v)} )P(w)$, 
\item $(P(u)P( \left.q\right |_{\alpha[v,w]}), \alpha [u,  \left.q\right |_{P(v)P(w)} ])$  with the source $P(u)P (\left.q\right |_{P(v)P(w)} )$,
\end{enumerate}
which are all confluent by a straightforward computation, similar to Example~\ref{E:ConvergentOfDifferentialAlgebra}. Hence, $X^P$ is convergent.

Now, we construct the set $\Nf(X^P)$. Define the \emph{alternating product} of objects $U$ and $V$ with the operator~$P$ (see also \cite{KuruschGuo}) as
\[
\Lambda_P(U, V) \coloneq \left(\bigcup_{r \geq 0}(U P(V))^r U\right) \cup \left(\bigcup_{r \geq 1}(U P(V))^r\right) \cup \left(\bigcup_{r \geq 0}(P(V) U)^r P(V)\right) \cup \left(\bigcup_{r \geq 1}(P(V) U)^r\right).
\]
We introduce the following notations
\[
 \Phi_0 \coloneq Z^\ast \setminus \{1\}, 
\quad
 \Phi_1 \coloneq \Lambda_P(\Phi_0, Z^\ast),
\quad
\text{ and } \quad\Phi_n \coloneq  \Lambda_P(\Phi_0, \Phi_{_{n-1}}),
\;\;\text{for $n \geq 2$},
\]
and define the set
\[
\Phi_P \coloneq \left(\bigcup_{n \geq 0} \Phi_n\right) \cup \{1\}.
\]
Thus, we have $\Nf(X^P)=\Phi_P$,
since there are no $\Omega$-monomials of the form $\left.q\right|_{P(u)P(v)}$ in $\Nf(X^P)$, for all~$u, v \in Z^\Omega$ and $q\in Z^\Omega[\square]$.

\begin{remark}
As in Example~\ref{D:DiffAutomaton}, we  construct a PDA $\Ar^{\scriptscriptstyle P}$
\begin{eqn}{equation}
\label{D:PDAOfRB}
\raisebox{-1.6cm}{
\begin{tikzpicture}[shorten >=1pt, node distance=2.8cm, on grid, auto]
   \node[state, initial] (q_0)   {$q_0$}; 
   \node[state] (q_1) [right=of q_0] {$q_1$}; 
   \node[state] (q_2) [right=of q_1] {$q_2$};
   \node[state,accepting] (q_3) [above=1.7cm of q1] {$q_3$};
   \node[state,accepting] (q_4) [above=1.7cm of q_2] {$q_4$};

    \path[->] 
    (q_0) edge[below]              node {$\epsilon, \epsilon \fl \$ $} (q_1)
    (q_1) edge [loop below] node {$\mathsf{\ell}_{\scriptscriptstyle P}
, \epsilon \fl P $} ()
          edge[bend right=15] node[below] {$\epsilon, \epsilon \fl \epsilon $} (q_2)
    (q_2) edge [loop below] node {$\mathsf{r}_{\scriptscriptstyle P}
, P \fl \epsilon $} ()
          edge[bend right=15] node[above] {$Z, \epsilon \fl \epsilon $} (q_1);

    \path[->] 
    (q_1) edge[left]              node {$\epsilon, \$ \fl \epsilon $} (q_3);

    \path[->] 
    (q_2)  edge[right]              node {$\epsilon, \$ \fl \epsilon $} (q_4);
\end{tikzpicture}
}
\end{eqn}
which accepts all monomials in $\Phi_P$.  We obtain $\Ar^{\scriptscriptstyle P}$ by modifying $\Ar_\Omega$ to exclude monomials containing the subword $\mathsf{r}{\scriptscriptstyle P} \mathsf{\ell}{\scriptscriptstyle P}$, since $\Phi_P = Z^\Omega \setminus \{\left.q\right|_{P(u)P(v)}\}$ for any $u, v \in Z^\Omega$.
\end{remark}
 
The polygraph $X^P$ is not reduced, as it contains two families of inclusion branchings with sources $P(q|_{P(u)P(v)})P(w)$ and $P(u)P(q|_{P(v)P(w)})$ for all $u, v, w \in Z^\Omega$.

\subsubsection{A reduced presentation} 
\label{SSS:LemmaForReducedRB}
We write $\rep{w}=\Nf(w,X^P)$, for every $w\in Z^\Omega$, and construct a reduced \LO-1-polygraph $\widetilde{X}^P$, which is Tietze equivalent to $X^P$, with 
\[
\widetilde{X_1}^P\coloneq \{ \alpha[u,v]: P(u)P(v)\rightarrow P(\rep{P(u)v})+P(\rep{uP(v)})+\lambda P(\rep{uv})  \mid u,v \in \Phi_P\}.
\]
It follows that $\Nf(X^P) = \Nf(\widetilde{X}^P)=\Phi_P$.
 Indeed, it suffices to prove that  $Q(X^P)=Q(\widetilde{X}^P)$, where
\[
Q(X^P)\coloneq\{\left.q\right|_{P(u)P(v)}|u,v\in Z^\Omega\} \quad \text{ and } \quad
Q(\widetilde{X}^P)\coloneq\{\left.q\right|_{P(u)P(v)}|u,v\in\Phi_P\}.
\]
The inclusion $Q(\widetilde{X}^P) \subset Q(X^P)$ is straightforward.
For any $\left.q_0\right|_{P(u_0)P(v_0)}$ in $Q(X^P)$, if $u_0,v_0\in\Phi_P$, then it belongs to $Q(\widetilde{X}^P)$. Suppose $u_0 \notin \Phi_P$, then there exists a decomposition $u_0 = \left.q_1\right|_{P(u_1)P(v_1)}$. Repeating this process, we eventually obtain $\left.q_0\right|_{P(u_0)P(v_0)}=\left.q_k\right|_{P(u_k)P(v_k)}$ for $u_k,v_k\in\Phi_P$. Thus we conclude that~$Q(X^P) \subset Q(\widetilde{X}^P)$.

\begin{theorem}
\label{T:ResolutionRB} 
The algebra $\RBA{\lambda}{Z}$ has the polygraphic resolution  $\mathrm{Sq}(\widetilde{X}^P)$, where
\begin{enumerate}
\item $\mathrm{Sq}_0(\widetilde{X}^P) :=Z$,
\item $\mathrm{Sq}_n(\widetilde{X}^P) := \{\, P(u_1)|P(u_2)|\cdots|P(u_{n+1}) \;\big|\; u_i\in \Phi_P\,\}$, for every $n\geq1$.
\end{enumerate}
\end{theorem}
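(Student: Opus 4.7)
The plan is to apply Theorem~\ref{T:Main conclusion} directly to $\widetilde{X}^P$, which Subsection~\ref{SSS:LemmaForReducedRB} already established as a reduced convergent \LO-$1$-polygraph presenting $\RBA{\lambda}{Z}$ with $\Nf(\widetilde{X}^P)=\Phi_P$. This immediately produces a polygraphic resolution $\mathrm{Sq}(\widetilde{X}^P)$ of $\RBA{\lambda}{Z}$, so the remaining work is to unfold the combinatorial definition in~\eqref{SSS:SquierPolygraphicResolution} and verify that it reduces to the stated shape.

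The identification $\mathrm{Sq}_0(\widetilde{X}^P)=Z$ is immediate. For $n=1$, I would handle the definition case by case. Case~\ref{I:Condition1OfSq2} (with $u_1=\epsilon$ and $u_2=\lfloor u_0\rfloor$) contributes nothing, because $\lfloor u_0\rfloor=P(u_0)$ with $u_0\in\Phi_P$ has breadth one, whereas every rule of $\widetilde{X}^P$ has breadth-two source $P(u)P(v)$; so $P(u_0)$ is already a normal form. Inside Case~\ref{I:Condition1OfSq1}, I would rule out $u_1\in Z$ by the observation that any reducible subword of $u_1u_2$ must have shape $P(\cdot)P(\cdot)$ and cannot involve a letter from $Z$; it would then sit entirely inside $u_2$, contradicting $u_2\in\Phi_P$. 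Hence $u_1=P(w)$ with $w\in\Phi_P$. Writing $u_2$ as a concatenation of letters in $Z\sqcup\lfloor\Phi_P\rfloor$, the reducibility of $u_1u_2$ forces its first letter to be of the form $P(v)$, and the proper left-factor condition forces $u_2$ to consist of exactly that one letter: otherwise $P(w)P(v)$ would be a reducible proper left-factor. This pins down $\mathrm{Sq}_1(\widetilde{X}^P)=\{P(u_1)|P(u_2)\mid u_i\in\Phi_P\}$.

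For $n\geq 2$, I would iterate the same left-factor argument pairwise along the tuple. Assuming inductively that $u_1,\ldots,u_i$ are all of the form $P(u_j')$ with $u_j'\in\Phi_P$, the conditions on $(u_i,u_{i+1})$ are identical to those analysed above for $(u_1,u_2)$, and the same reasoning forces $u_{i+1}=P(u_{i+1}')$ with $u_{i+1}'\in\Phi_P$. The main subtlety, and the step I expect to be the most delicate to write out carefully, is precisely this left-factor analysis: it is what excludes tuples containing any component of breadth greater than one, and it relies crucially on the fact that $P(\cdot)P(\cdot)$ is the unique shape of a left-hand side of a rule in $\widetilde{X}^P$. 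Once this is in place, the description $\mathrm{Sq}_n(\widetilde{X}^P)=\{P(u_1)|\cdots|P(u_{n+1})\mid u_i\in\Phi_P\}$ follows, completing the identification of the resolution.
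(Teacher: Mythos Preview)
Your proposal is correct and follows the same approach as the paper: both invoke Theorem~\ref{T:Main conclusion} for the reduced convergent polygraph $\widetilde{X}^P$. The paper's proof is a single line and leaves the identification of $\mathrm{Sq}_n(\widetilde{X}^P)$ with the stated description implicit, whereas you carry out this combinatorial unfolding explicitly; your case analysis (eliminating case~\ref{I:Condition1OfSq2} via $P(u_0)\in\Phi_P$, ruling out $u_1\in Z$, and using the proper left-factor condition to force each $u_i$ to have breadth one) is accurate and fills in exactly what the paper omits.
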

\begin{proof}
The proof follows from the fact that $\widetilde{X}^P$ is reduced and convergent, along with \eqref{T:Main conclusion}.
\end{proof}

\subsubsection{Low-dimensional generators of $\mathrm{Sq}(\widetilde{X}^P)$}
\label{SSS:InterpretationForRBResolution}
The following diagrams correspond the $1$-generator $P(u_1) | P(u_2)$, the 2-generators $P(u_1) | P(u_2) | P(u_3)$, and the 3-generators $P(u_1) | P(u_2) | P(u_3) | P(u_4)$ to higher critical branchings
\[
\xymatrix @C=3.5em {
\strut 
\ar@{..>} [r] _-{P(u_1)}
\ar@{..>}@/^7ex/ [rr] ^-{m}="a"
& \strut
\ar@{..>} [r] _{P(u_2)}
& \strut
\ar@1 "1,2";"a"!<0pt,7.5pt> |-{P(u_1)|P(u_2)}
}
\quad
\xymatrix @C=3.7em {
\strut 
\ar@{..>} [r] _-{P(u_1)}
\ar@{..>}@/^7ex/ [rr] ^-{m}="aw"
& \strut
\ar@{..>} [r] ^{P(u_2)}
\ar@{..>}@/_7ex/ [rr] _-{n}="ub"
& \strut
\ar@{..>}[r] ^-{P(u_3)}="w"
& \strut
\ar@1 "1,2";"aw"!<0pt,3pt> |-{P(u_1)|P(u_2)}
\ar@1 "1,3";"ub"!<0pt,7.5pt> |-{P(u_2)|P(u_3)}
}
\quad
\xymatrix @C=4em {
\strut 
\ar@{..>} [r] _-{P(u_1)}
\ar@{..>}@/^7ex/ [rr] ^-{m}="a"
& \strut
\ar@{..>} [r] ^{P(u_2)}
\ar@{..>}@/_7ex/ [rr] _-{n}="b"
& \strut
\ar@{..>}[r] ^-{P(u_3)}="w"
\ar@{..>}@/^7ex/ [rr] ^-{o}="c"
& \strut
\ar@{..>}[r] _-{P(u_4)}="s"
& \strut
\ar@1 "1,2";"a"!<0pt,3pt> |-{P(u_1)|P(u_2)}
\ar@1 "1,3";"b"!<0pt,7.5pt> |-{P(u_2)|P(u_3)}
\ar@1 "1,4";"c"!<0pt,3pt> |-{P(u_3)|P(u_4)}
}
\]
respectively, where $u_1,u_2,u_3,u_4\in \Phi_P$ and 
\[
\begin{aligned}
m&=P(\rep{P(u_1)u_2})+P(\rep{u_1P(u_2)})+\lambda P(\rep{u_1u_2}),\\
n&=P(\rep{P(u_2)u_3})+P(\rep{u_2P(u_3)})+\lambda P(\rep{u_2u_3}),\\
o&=P(\rep{P(u_3)u_4})+P(\rep{u_3P(u_4)})+\lambda P(\rep{u_3u_4}).
\end{aligned}
\]
We also illustrate their shapes in low dimensions.
For the 1-generators $P(u_1) | P(u_2)$, we have
\[
P(u_1)|P(u_2):P(u_1)P(u_2)\fl \rep{P(u_1)P(u_2)}.
\]
By Theorem~\ref{T:Main conclusion}, the 2-generators $P(u_1) | P(u_2) | P(u_3)$ and the 3-generators $P(u_1) | P(u_2) | P(u_3) | P(u_4)$ have the following shapes
\[
\xymatrix @C=3em@R=2em {
& \rep{ab}c
\ar@/^/ [dr] ^-{\sigma_{\rep{ab}c}}
\ar@2 []!<0pt,-10pt>;[d]!<0pt,2.5pt> ^-*+{a|b|c}
\\
abc
\ar@/^/ [ur] ^-{(a|b)c}
\ar@/_/ [rr] _-{\sigma_{abc}}
&& {\rep{abc}}
}
\]
\[
\vcenter{\xymatrix @C=5em @R=4.5em {
{\rep{ab}cd}
	\ar@/^/ [r] ^-{\sigma_{\rep{ab}c}d}
	\ar@2 []!<10pt,-15pt>;[d]!<10pt,25pt> ^(0.1){(a| b| c)d}
& {\rep{abc}d}
	\ar@/^/ [d] ^-{\sigma_{\rep{abc}d}}
	\ar@2 []!<-10pt,-25pt>;[d]!<-10pt,15pt> _(0.9){\sigma_{\sigma_{abc}d}}
\\
abcd
	\ar@/^/ [u] ^-{(a| b)cd}
	\ar@//  [ur] |-*+{\sigma_{abc}d}
	\ar@/_/ [r] _-{\sigma_{abcd}}
& {\rep{abcd}}
}}
\!\!
\overset{\displaystyle a| b| c| d}{\Rrightarrow}
\!\!
\vcenter{\xymatrix @C=5em @R=4.5em {
{\rep{au}vw}
	\ar@/^/ [r] ^-{\sigma_{\rep{ab}c}d}
	\ar@// [dr] |-*+{\sigma_{\rep{ab}cd}}
	\ar@2 []!<10pt,-25pt>;[d]!<10pt,15pt> ^(0.9){\sigma_{(a| b)cd}}
& {\rep{abc}d}
	\ar@/^/ [d] ^-{\sigma_{\rep{abc}d}}
	\ar@2 []!<-10pt,-15pt>;[d]!<-10pt,25pt> _(0.1){\sigma_{\sigma_{\rep{ab}c}d}}
\\
abcd
	\ar@/^/ [u] ^-{(a| b)cd}
	\ar@/_/ [r] _-{\sigma_{abcd}}
& {\rep{abcd}}
}}
\]
where $a,b,c$ and $d$ correspond to $P(u_1)$, $P(u_2)$, $P(u_3)$, and $P(u_4)$, respectively.

\subsection{Polygraphic resolutions of free differential algebras}
\label{SS:ResolutionsOfFreeDifferentialAlgebras}
This subsection presents two polygraphic resolutions of the free differential algebra $\DA{\lambda}{Z}$ on $Z$.

\subsubsection{A reduced presentation} 
\label{SSS:ReducedPolygraphForDA}
The algebra $\DA{\lambda}{Z}$ is presented by the convergent polygraph $X^D$, as defined in \eqref{E:FreeDifferentialAlgebra}. 
We construct a reduced \LO-1-polygraph $\widetilde{X}^D$ with
\begin{equation*}
\begin{aligned}
\widetilde{X_1}^D\coloneq \big\{ \alpha[u_1,u_2,\ldots,u_n]&:D(u_1u_2\cdots u_n)\rightarrow \sum_{\substack{1\leq i_1<\cdots< i_k\leq n \\ 1\leq k\leq n}} \lambda^{k-1}D_{i_1,\ldots, i_k}\left(u_1,\ldots ,u_n\right), \\
\varphi&:D(1)\fl0 \mid u_1,\ldots,u_n \in D^\theta(Z)\setminus \{1\}, n\geq 2 \big\},
\end{aligned}
\end{equation*} 
where $D_{i_1,\ldots, i_k}\left(u_1,\ldots, u_n\right)\coloneq u_1\cdots D(u_{i_1})\cdots D(u_{i_k}) \cdots u_n$.

\begin{lemma} 
\label{L:TwoTietzeEquivalentPolyForDA}
The polygraphs $\widetilde{X}^D$ and $X^D$ are Tietze equivalent.
\end{lemma}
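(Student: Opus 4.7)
To establish Tietze equivalence, the plan is to prove the equality of defining ideals $I_\Omega(\widetilde{X}^D) = I_\Omega(X^D)$ in $\lin{Z}$. The two inclusions will be handled separately: the forward inclusion by a direct rewriting argument deriving the $n$-ary rules of $\widetilde{X}^D$ from the binary ones of $X^D$, and the reverse inclusion by a dimension-counting argument relative to the linear basis $(D^\theta(Z))^\ast$ provided by Example~\ref{E:ConvergentOfDifferentialAlgebra}.

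First I would show $I_\Omega(\widetilde{X}^D) \subseteq I_\Omega(X^D)$ by induction on $n \geq 2$, fixing $u_1, \ldots, u_n \in D^\theta(Z)\setminus\{1\}$. The case $n=2$ holds since $\alpha[u_1, u_2]$ already belongs to $X^D$. For $n > 2$, the plan is to decompose $u_1 \cdots u_n = u_1 \cdot (u_2 \cdots u_n)$, apply $\alpha[u_1,\, u_2 \cdots u_n]$ to rewrite $D(u_1 \cdots u_n)$ modulo $I_\Omega(X^D)$ as $D(u_1)(u_2 \cdots u_n) + u_1 D(u_2 \cdots u_n) + \lambda D(u_1) D(u_2 \cdots u_n)$, and then invoke the inductive hypothesis on both copies of $D(u_2 \cdots u_n)$. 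A routine reindexing by subsets of $\{2, \ldots, n\}$, splitting each $S \subseteq \{1, \ldots, n\}$ according to whether $1 \in S$, then matches the combinatorial sum defining $\alpha[u_1, \ldots, u_n]$. The rule $\varphi : D(1) \to 0$ is common to both polygraphs.

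For the reverse inclusion, I would first prove by induction on the depth of an $\Omega$-monomial $u \in Z^\Omega$ that $u$ is congruent modulo $I_\Omega(\widetilde{X}^D)$ to a linear combination of elements of $(D^\theta(Z))^\ast$. The depth-zero case is trivial since $Z^\ast \subseteq (D^\theta(Z))^\ast$, and in the inductive step each factor of the form $D(a)$ is reduced using the inductive hypothesis on~$a$, the linearity of $D$, and the rules of $\widetilde{X}^D$ whose sources are of the form $D(u_1 \cdots u_n)$ with $u_i \in D^\theta(Z)$. Extending by linearity and bilinearity of the product, the image of $(D^\theta(Z))^\ast$ spans $\lin{Z}/I_\Omega(\widetilde{X}^D)$. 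Combined with the first inclusion, this yields a surjective morphism of algebras
\[
\lin{Z}/I_\Omega(\widetilde{X}^D) \twoheadrightarrow \DA{\lambda}{Z}.
\]
Since $(D^\theta(Z))^\ast = \Nf(X^D)$ is a linear basis of $\DA{\lambda}{Z}$, its image in the codomain is linearly independent, which forces linear independence back in the domain. The spanning set is therefore a basis, the surjection is an isomorphism, and the two ideals coincide.

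The main obstacle is the depth induction in the reverse inclusion. Care is required when handling nested contexts $\lfloor w \rfloor_D$: the reduction of $w$ must come first before the $\widetilde{X}^D$-rule becomes applicable, and after expanding products the result must still lie in the span of $(D^\theta(Z))^\ast$. A purely syntactic strategy attempting to derive the binary rule $\alpha[u,v]$ for arbitrary $u, v \in Z^\Omega \setminus \{1\}$ directly from the rules of $\widetilde{X}^D$ would require a delicate case analysis on the depths of $u$ and $v$; the dimension-counting strategy sidesteps this by transporting linear independence from the known basis of $\DA{\lambda}{Z}$.
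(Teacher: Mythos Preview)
Your proof is correct and takes a genuinely different route from the paper. The paper first observes that $\widetilde{X}^D$ is convergent (it has no critical branchings), then shows that the sets of reducible monomials coincide, namely $\{q|_{D(uv)} : u,v \in Z^\Omega\setminus\{1\}\} = \{q|_{D(u_1\cdots u_n)} : u_i \in D^\theta(Z)\setminus\{1\},\, n\geq 2\}$, and finally proves $\Nf(w,X^D)=\Nf(w,\widetilde{X}^D)$ for every $w$ by a depth induction on the factors of $w$; equality of normal forms for each element, together with convergence of both polygraphs, then forces equality of ideals. Your approach bypasses the convergence of $\widetilde{X}^D$ entirely: the forward inclusion is handled by an explicit inductive derivation of the $n$-ary rule from the binary one, and the reverse inclusion is obtained not by producing each binary $\alpha[u,v]$ inside $I_\Omega(\widetilde{X}^D)$ but by the dimension-counting argument that $(D^\theta(Z))^\ast$ spans the quotient by $I_\Omega(\widetilde{X}^D)$ and maps bijectively onto a basis of $\DA{\lambda}{Z}$. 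The paper's approach keeps everything within the rewriting formalism and, as a byproduct, establishes that $\widetilde{X}^D$ is convergent with the same normal forms as $X^D$, facts needed later for the polygraphic resolution; your approach is more purely algebraic, shorter for the Tietze statement alone, and the linear-independence transport trick is a clean way to avoid the awkward case analysis you flagged.
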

\begin{proof}
The polygraph $\widetilde{X}^D$ is  convergent since it contains no critical branchings. We prove this lemma in two steps.
First, we show that  $\Nf(\widetilde{X}^D)=\Nf(X^D)$. It suffices to prove that  $Q(X^{D})=Q(\widetilde{X}^D)$, where
\[
Q(X^D)\coloneq\{\left.q\right|_{D(uv)}|u,v\in Z^\Omega\setminus \{1\}\} \quad\text{ and }\quad Q(\widetilde{X}^D)\coloneq\{\left.q\right|_{D(u_1 \cdots u_n)}|u_i\in D^\theta(Z)\setminus \{1\},n\geq2\}.
\]
The inclusion $Q(\widetilde{X}^D)\subset Q(X^D)$ is straightforward.
For any $\left.q\right|_{D(u_1\cdots u_n)} \in Q(X^D)$ with $\operatorname{bre}(u_i) = 1$, if all $u_i\in D^\theta(Z)\setminus \{1\}$, then it belongs to $Q(\widetilde{X}^D)$. If not, suppose $u_1 = D(v_1\cdots v_m) \in Q(X^D)$ with $\operatorname{bre}(v_i) = 1$, and repeat this process for $D(v_1\cdots v_m)$. Eventually, we obtain $\left.q\right|_{D(u_1\cdots u_n)} = \left.q'\right|_{D(w_1 \cdots w_k)}$, where $w_1, \dots, w_k \in D^\theta(Z)\setminus \{1\}$, which shows that $ Q(X^D)\subset Q(\widetilde{X}^D)$. 

Next, we prove that $\Nf(w,X^D)=\Nf(w,\widetilde{X}^D)$, for every $w\in Z^\Omega$. 
We first consider the case $w=D(u)$, with $u\in Z^\Omega$. There exists a decomposition $D(u) = D(s_1 \cdots s_n)$, with $\operatorname{bre}(s_i) = 1$, and we proceed by induction on $\max(\operatorname{dep}(s_i))$. For the case $\max(\operatorname{dep}(s_i)) = 0$, meaning that all $s_i \in Z$, we have
\[
D(u)=D(s_1\cdots s_n)\fl \sum_{\substack{1\leq i_1<\cdots< i_k\leq n \\ 1\leq k\leq n}} \lambda^{k-1}D_{i_1,\ldots, i_k}(s_1,\ldots ,s_n)
,
\]
in $X^D$.
It follows that $\Nf(D(u), X^D) = \Nf(D(u), \widetilde{X}^D)$.
Now, we assume that it holds for $\max(\operatorname{dep}(s_i))\leq m$. If $\max(\operatorname{dep}(s_i))=m+1$, we have $s_j = D(v_j)$ with $\operatorname{dep}(v_j) = m$ for some $1\leq v_j\leq n$. By the induction hypothesis, $\Nf(v_j,X^D)=\Nf(v_j,\widetilde{X}^D)$ holds, implying that the normal forms of $D(u)$ are equal in both $X^D$ and $\widetilde{X}^D$.
So, for every $w = w_1 w_2 \cdots w_n \in Z^\Omega$ with $\operatorname{bre}(w_i) = 1$, we have $\Nf(w_i, X^D) = \Nf(w_i, \widetilde{X}^D)$ for every $i$, and thus $\Nf(w, X^D) = \Nf(w, \widetilde{X}^D)$.
Hence, $\widetilde{X}^D$ and $X^D$ are Tietze equivalent.
\end{proof}

\begin{theorem}
\label{T:FirstResolutionOfDA}
The algebra $\DA{\lambda}{Z}$ has the polygraphic resolution  $\mathrm{Sq}(\widetilde{X}^D)$, where
\begin{enumerate}
\item $\mathrm{Sq}_0(\widetilde{X}^D) := Z$,
\item $\mathrm{Sq}_1(\widetilde{X}^D) := \{\, \epsilon|D(1), \epsilon | D(uv) \;\big|\; u,v\in (D^\theta(Z))^\ast\setminus \{1\}\,\}$,
\item $\mathrm{Sq}_n(\widetilde{X}^D)$ is empty, for every $n \geq 2$.
\end{enumerate}
\end{theorem}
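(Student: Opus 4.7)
The plan is to apply Theorem~\ref{T:Main conclusion} to the polygraph $\widetilde{X}^D$. The first step is to establish that $\widetilde{X}^D$ is a reduced convergent \LO-$1$-polygraph. For left-reducedness, the sources $D(u_1\cdots u_n)$ and $D(1)$ admit unique factorizations into $D^\theta(Z)\setminus\{1\}$ blocks, so no two rules share a common source. For right-reducedness, every monomial in the target lies in $(D^\theta(Z))^\ast$, which coincides with $\Nf(\widetilde{X}^D)$. Convergence follows from Lemma~\ref{L:TwoTietzeEquivalentPolyForDA} (which gives Tietze equivalence and termination via the same derivation as in Example~\ref{E:ConvergentOfDifferentialAlgebra}) combined with the observation that $\widetilde{X}^D$ has no critical branchings: the left-hand sides $D(u_1\cdots u_n)$ and $D(1)$ have their reducible pattern at the top of a bracket with atomic factors $u_i\in D^\theta(Z)\setminus\{1\}$, so neither intersection nor inclusion overlaps arise among distinct instances.

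The second step is to compute $\mathrm{Sq}_1(\widetilde{X}^D)$ from the definition in \eqref{SSS:SquierPolygraphicResolution}. Since $\Nf(\widetilde{X}^D)=(D^\theta(Z))^\ast$ is closed under concatenation, any pair $u_1,u_2\in\Nf$ produces $u_1u_2\in\Nf$, which is irreducible. Hence case \ref{I:Condition1OfSq1} (with $u_1\in Z$ or $u_1=\lfloor u_0\rfloor\in\Nf$) yields no $1$-generator, because the required condition \ref{I:Condition1OfSq3} (reducibility of $u_1u_2$) fails. Only case \ref{I:Condition1OfSq2} remains: $u_1=\epsilon$ and $u_2=D(u_0)$ with $u_0\in\Nf$, and $D(u_0)$ must be reducible. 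Inspecting $\widetilde{X_1}^D$, this occurs exactly when $u_0=1$ (giving $\epsilon|D(1)$) or $u_0$ is a word of length at least two in $(D^\theta(Z))^\ast$, i.e.\ $u_0=uv$ for some $u,v\in(D^\theta(Z))^\ast\setminus\{1\}$; every proper left factor of $D(u_0)$ is $\epsilon$ and thus vacuously a normal form. This yields exactly the advertised description of $\mathrm{Sq}_1(\widetilde{X}^D)$.

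The third step is to show $\mathrm{Sq}_n(\widetilde{X}^D)=\emptyset$ for $n\geq 2$. By \eqref{SSS:SquierPolygraphicResolution}, an $n$-generator $u_1|\cdots|u_{n+1}$ requires $(u_1,u_2)$ to be a $1$-generator satisfying \emph{both} \ref{I:Condition1OfSq1} and \ref{I:Condition1OfSq3}. The analysis above shows that no $1$-generator of $\widetilde{X}^D$ belongs to case \ref{I:Condition1OfSq1}, so the set is empty. Combining these computations with Theorem~\ref{T:Main conclusion}, the resulting polygraph is a polygraphic resolution of $\DA{\lambda}{Z}$.

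The main obstacle is the careful case analysis to see that the only $1$-generators arise from bracketed monomials $\epsilon|D(u_0)$ and that higher generators all vanish; this hinges on the key structural fact that $\Nf(\widetilde{X}^D)$ is closed under the monoid product, so every reducibility in $\widetilde{X}^D$ is confined strictly under a single $D$-bracket. Once this closure is noted, the remaining steps reduce to a direct unwinding of the definitions.
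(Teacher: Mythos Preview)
Your proposal is correct and follows the same approach the paper implicitly uses: the theorem is stated without proof in the paper, but it is clear from context that it is obtained by applying Theorem~\ref{T:Main conclusion} to the reduced convergent polygraph $\widetilde{X}^D$ (whose convergence is asserted at the start of the proof of Lemma~\ref{L:TwoTietzeEquivalentPolyForDA}), and your computation of the generating sets $\mathrm{Sq}_n(\widetilde{X}^D)$ via the closure of $\Nf(\widetilde{X}^D)=(D^\theta(Z))^\ast$ under concatenation is exactly the intended argument.
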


If $\lambda \neq 0$, we provide another polygraphic resolution of $\DA{\lambda}{Z}$, similar to $\mathrm{Sq}(\widetilde{X}^P)$.

\subsubsection{A reduced presentation for $\lambda \neq 0$}
When $\lambda\neq0$, we give another presentation $Y^{D}\coloneq\{Z,Y_1^{D}\}$ of $\DA{\lambda}{Z}$ with
\[
Y_1^{D}\coloneq \big\{ \alpha[u,v]: D(u)D(v)\fl \lambda^{-1}\big(D(uv)- D(u)v-u D(v)\big),\; \varphi:D(1)\fl 0 \mid u,v \in Z^\Omega\setminus \{1\}\big\}.
\]
The termination of $Y^D$ follows from the decrease of the number of operators when applying the rules $\alpha[u,v]$ and $\varphi$.
This polygraph is also confluent, as it has five families of critical branchings
\begin{enumerate}
\item $ (D(u)\alpha[v,w], \alpha[u,v]D(w) )$, with the source $D(u)D(v)D(w)$, 
\item $ (D ( \left.q \right |_{\alpha[u,v]} )D(w), \alpha [ \left.q \right |_{D(u)D(v)}, w ] )$, with the source  $D ( \left.q \right |_{D(u)D(v)} )D(w)$, 
\item $ (D(u)D ( \left.q \right |_{\alpha[v,w]} ), \alpha [u,  \left.q \right |_{D(v)D(w)} ] )$,  with the source $D(u)D ( \left.q \right |_{D(v)D(w)} )$,
\item  $ (D ( \left.q \right |_{\varphi} )D(w), \alpha [ \left.q \right |_{D(1)}, w ] )$, with the source  $D ( \left.q \right |_{D(1)} )D(w)$,
\item $ (D(u)D ( \left.q \right |_{\varphi} ), \alpha [u,  \left.q \right |_{D(1)} ] )$,  with the source $D(u)D ( \left.q \right |_{D(1)} )$,
\end{enumerate}
all of which can be verified as confluent through straightforward computation. Define the alternating product of $U$ and $V$ with operator $D$ as
\[
\Lambda_D(U, V) \coloneq \left(\bigcup_{r \geq 0}(U D(V))^r U\right) \cup \left(\bigcup_{r \geq 1}(U D(V))^r\right) \cup \left(\bigcup_{r \geq 0}(D(V) U)^r D(V)\right) \cup \left(\bigcup_{r \geq 1}(D(V) U)^r\right).
\]
We introduce the notations $\Phi_0 \coloneq Z^\ast \setminus \{1\}$ and $\Phi_n \coloneq \Lambda_D(\Phi_0, \Phi_{n-1})$ for $n \geq 1$, and define the set
\[
\Phi_D \coloneq \left(\bigcup_{n \geq 0} \Phi_n\right) \cup \{1\}.
\] 
Thus, we have $\Nf(Y^{D})=\Phi_D$.
Note that the construction of $\Phi_D$ differs from that of $\Phi_P$ in \eqref{SSS:PolygraphForRB}, as~$\Phi_1 = \Lambda_D(\Phi_0, Z^\ast \setminus \{1\})$, which implies $D(1) \notin \Phi_D$.
Next, we write $\rep{w}=\Nf(w,Y^D)$ for every $w\in Z^\Omega$ and present a reduced convergent polygraph $\widetilde{Y}^D$, which is Tietze equivalent to $Y^D$, with
\[
\widetilde{Y}_1^D \coloneq \big\{ \alpha[u,v]: D(u)D(v)\rightarrow \lambda^{-1}\big(\rep{D(u)v}- \rep{uD(v)}-D(\rep{uv})\big),\;\varphi:D(1)\fl 0 \mid u, v \in \Phi_D\setminus \{1\} \big\}.
\]
Similarly to the explanation of $\Nf(X^P)$ and $\Nf(\widetilde{X}^P)$ in \eqref{SSS:LemmaForReducedRB}, we have $\Nf(Y^{D}) = \Nf(\widetilde{Y}^D)=\Phi_D$.

\begin{theorem}
When $\lambda\neq0$, the algebra $\DA{\lambda}{Z}$ has the polygraphic resolution  $\mathrm{Sq}(\widetilde{Y}^D)$, where 
\begin{enumerate}
\item $\mathrm{Sq}_0(\widetilde{Y}^D) :=Z$,
\item $\mathrm{Sq}_1(\widetilde{Y}^D) := \{\, \epsilon|D(1), D(u_1)|D(u_2)\; \big| \; u_i\in \Phi_D\setminus \{1\}\,\}$,
\item $\mathrm{Sq}_n(\widetilde{Y}^D) := \{\, D(u_1)|D(u_2)|\cdots|D(u_{n+1})\; \big| \; u_i\in \Phi_D\setminus~\{1\}\,\}$, for every $n \geq 2$.
\end{enumerate}
\end{theorem}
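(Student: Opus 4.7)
The plan is to apply Theorem~\ref{T:Main conclusion} to $\widetilde{Y}^D$ and then enumerate the generators of $\mathrm{Sq}(\widetilde{Y}^D)$ directly from the combinatorial description in \eqref{SSS:SquierPolygraphicResolution}, in a manner parallel to the proof of Theorem~\ref{T:ResolutionRB}. The hypotheses of Theorem~\ref{T:Main conclusion} -- that $\widetilde{Y}^D$ is reduced and convergent -- are already secured: reducedness holds by construction since sources and targets of the rules $\alpha[u,v]$ and $\varphi$ lie in $\Phi_D = \Nf(\widetilde{Y}^D)$, termination follows from the strict decrease of the number of $D$-operators under each rule, and confluence follows from the five families of critical branchings listed after the definition of $Y^D$ together with Theorem~\ref{T:CriticalBranchingLemma} (the critical branchings of $\widetilde{Y}^D$ are the same as those of $Y^D$ up to normalisation of their targets).

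For $\mathrm{Sq}_1(\widetilde{Y}^D)$, I would go through the two cases of condition (ii). Case (ii-b) demands $u_1 = \epsilon$ and $u_2 = \lfloor u_0 \rfloor$ with $u_0$ a normal form; the only such monomial that is reducible is $D(1)$, via the rule $\varphi$, yielding the single generator $\epsilon \mid D(1)$. Case (ii-a) requires $u_1 \in Z$ or $u_1 = \lfloor u_0 \rfloor \in \Nf(\widetilde{Y}^D)$, $u_2 \in \Nf(\widetilde{Y}^D)$, and $u_1 u_2$ reducible with every proper left-factor a normal form. Since all redexes of $\widetilde{Y}^D$ other than $D(1)$ have breadth two (namely $D(u)D(v)$ with $u,v \in \Phi_D \setminus \{1\}$), the only possibility is $u_1 = D(u_1'), u_2 = D(u_2')$ with $u_i' \in \Phi_D \setminus \{1\}$, and the proper left-factor $D(u_1')$ is indeed normal. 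This gives exactly the $1$-generators $D(u_1)\mid D(u_2)$ listed in the theorem.

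For $n \geq 2$, condition (iii-a) excludes case (ii-b), so the initial pair must be $D(u_1)\mid D(u_2)$ with $u_i \in \Phi_D \setminus \{1\}$. The inductive step is then to show that for each $2 \leq i < n+1$, the requirement that $u_i u_{i+1}$ be reducible with every proper left-factor normal forces $u_{i+1} = D(u_{i+1}')$ with $u_{i+1}' \in \Phi_D \setminus \{1\}$. Since $u_i = D(u_i')$ is a normal form, a redex in $u_i u_{i+1}$ cannot lie strictly inside $u_i$ or strictly inside $u_{i+1}$; it must span the boundary, which in the operated setting means $u_{i+1}$ begins with a factor $D(v)$. A trailing factor on $u_{i+1}$ would create an intermediate reducible left-factor, contradicting the normality condition, so $u_{i+1}$ is exactly $D(u_{i+1}')$. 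This yields precisely the generators $D(u_1) \mid \cdots \mid D(u_{n+1})$.

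The main obstacle I anticipate is the case analysis at dimension $1$, where one must check carefully that no exotic generators arise: in particular, that no tuple of the form $D(u_1)\mid D(u_2')v$ with $v \neq 1$ is admissible, that $\epsilon\mid D(1)$ does not propagate to higher dimensions, and that no generator of the form $\epsilon \mid \lfloor u_0\rfloor$ with $u_0 \neq 1$ arises (since such $\lfloor u_0\rfloor$ is already a normal form and hence not reducible). Each of these is handled by inspecting the shape of the redexes $D(u)D(v)$ and $D(1)$ relative to the breadth of $\Omega$-monomials in $\Phi_D$, together with the Tietze equivalence $\Nf(\widetilde{Y}^D) = \Phi_D$ established in Subsection~\ref{SS:ResolutionsOfFreeDifferentialAlgebras}. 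Once this enumeration is complete, Theorem~\ref{T:Main conclusion} immediately yields that $\mathrm{Sq}(\widetilde{Y}^D)$ is a polygraphic resolution of $\DA{\lambda}{Z}$.
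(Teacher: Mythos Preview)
Your proposal is correct and follows exactly the approach the paper intends: apply Theorem~\ref{T:Main conclusion} to the reduced convergent polygraph $\widetilde{Y}^D$ and read off the generators from the combinatorial description in~\eqref{SSS:SquierPolygraphicResolution}, just as in the one-line proof of Theorem~\ref{T:ResolutionRB}. Your enumeration of the cases (ii-a), (ii-b), and the inductive step for $n\geq 2$ is accurate; the only minor imprecision is that $\widetilde{Y}^D$, being reduced, has strictly fewer critical branchings than $Y^D$ (only the intersection family $D(u)D(v)D(w)$ survives), but this does not affect the argument.
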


\subsection{Polygraphic resolutions of free differential Rota-Baxter algebras}

In this subsection, we consider the case with multiple operators and construct a polygraphic resolution of the free differential Rota-Baxter algebra $\DRBA{\lambda}{Z}$ on $Z$, assuming that $\lambda \neq 0$.

\subsubsection{A presentation of $\DRBA{\lambda}{Z}$}
\label{SSS:DRBPresentation}
The algebra $\DRBA{\lambda}{Z}$ is presented by
the \LO-1-polygraph $X$ with
\begin{equation*}
\begin{aligned}
X_1 \coloneq \{  \alpha[u,v]&:P(u)P(v) \rightarrow P(P(u)v) + P(uP(v)) + \lambda P(uv), \\
\beta[w_1,w_2]&: D(w_1) D(w_2)\rightarrow \lambda^{-1} \big(D(w_1w_2)- D(w_1)w_2- w_1 D(w_2)\big), \\
 \gamma[u]&:D(P(u)) \rightarrow u, \\
\varphi&:D(1)\rightarrow 0\mid u,v,w_1,w_2 \in Z^\Omega \text{ and } w_1,w_2\neq1 \}.
\end{aligned}
\end{equation*} 
We set $N=\mathbb{Z}^2$ and define a derivation $d : Z^\Omega \longrightarrow N$ as in~\eqref{SSS:PolygraphForRB}, by setting
\[
d(u) = \bigg( \operatorname{deg}_\Omega(u), \sum_{P|u} \big(\operatorname{deg}_\Omega(u) - \operatorname{deg}_\Omega(P)\big) 
+ \sum_{D|u} \big(\operatorname{deg}_\Omega(u) - \operatorname{deg}_\Omega(D)\big) \bigg),
\]
for every $u \in Z^\Omega$.
Moreover, we define a bimodule structure by setting 
\begin{align*}
v\cdot(m_1,m_2)=(m_1 ,m_2)\cdot v &=(m_1,m_2+n m_1),\\
P((m_1, m_2))
=(m_1+1, m_2+m_1+1), &
\quad
D((m_1, m_2))
=(m_1+1, m_2+m_1+1),
\end{align*}
for all $(m_1,m_2)\in N$ and $v\in Z^\Omega$ with $\operatorname{deg}_\Omega(v)=n$.
It follows that $d$ satisfies \eqref{E:Derivation}. By equipping $d(Z^\Omega)$ with a monotone  lexicographic order, where $d(1) = (0,0)$ is the minimal element, we ensure that $X$ is terminating.

The critical branchings of $X$ are all confluent (see Appendix), except for the following two cases
\[
\xymatrix @R=0.5em {
& uD(v) \quad\quad\quad\quad \quad\quad\quad\quad\quad\quad\quad\quad\quad\quad\quad\quad
\\
D(P(u))D(v)
\ar@/^3ex/ [ur] ^-{\gamma[u] D(v)}
\ar@/_3ex/ [dr] _-{\beta[P(u),v]}
\\
& \lambda^{-1} D(P(u)v)-\lambda^{-1} D(P(u))v-\lambda^{-1} P(u)D(v)
}
\]
\[
\xymatrix @R=0.5em {
& D(v)w \quad\quad\quad\quad \quad\quad\quad\quad\quad\quad\quad\quad\quad\quad\quad\quad
\\
D(v)D(P(w))
\ar@/^3ex/ [ur] ^-{D(v)\gamma[w]}
\ar@/_3ex/ [dr] _-{\beta[v,P(w)]}
\\
& \lambda^{-1} D(vP(w))-\lambda^{-1} D(v)P(w)-\lambda^{-1} vD(P(w)).
}
\]
where $u,v,w\in Z^\Omega$ and $v\neq1$.
By the completion procedure in \eqref{SSS:CompletionProcedure} and the derivation $d$ above, we complete the polygraph $X$ into a convergent one, denoted $X^{PD}$, where
\[
\begin{aligned}
X_1^{PD} \coloneq \{  \alpha[u,v]&:P(u)P(v) \rightarrow P(P(u)v) + P(uP(v)) + \lambda P(uv), \\
\beta[w_1,w_2]&: D(w_1) D(w_2)\rightarrow \lambda^{-1} \big(D(w_1w_2)- D(w_1) w_2- w_1 D(w_2)\big), \\
 \gamma[u]&:D(P(u)) \rightarrow u, \\
 \delta_1[u,w_2]&:P(u)D(w_2) \rightarrow P(D(u)w_2) + uw_2 + \lambda uD(w_2),\\
 \delta_2[w_1,v]&:D(w_1)P(v) \rightarrow D(w_1P(v)) + w_1v + \lambda D(w_1)v,\\
\varphi&:D(1)\rightarrow 0\mid u,v,w_1,w_2 \in Z^\Omega \text{ and } w_1,w_2\neq1 \}.
\end{aligned}
\]
We list all critical branchings of $X^{PD}$ in the appendix, which are all confluent.

\subsubsection{Normal forms}
Let us denote $L^{ij}(u) \coloneq P^i\left(D^j(u)\right)$, for every $u \in Z^\Omega$. 
We define $P^0(1) \coloneq 1$ and 
\[
\begin{aligned} 
\Lambda_{PD}(U, V) \: \coloneq \: & \left(\bigcup_{i, j, k\geq0;  r \geq 0}P^k(1)\left(U L^{ij}\left(V\right)\right)^r U P^k(1)\right) \cup\left(\bigcup_{ i, j, k \geq 0; r \geq 1}P^k(1)\left(U L^{ij}\left(V\right)\right)^r\right) \\ & \cup\left(\bigcup_{i, j, k \geq 0; r \geq 1}\left(L^{ij}\left(V\right) U\right)^r L^{ij}\left(V\right)\right) \cup\left(\bigcup_{i, j, k\geq 0; r \geq 1 }\left(L^{ij}\left(V\right) U\right)^r P^k(1)\right)
\end{aligned}
\]
We introduce the notations $Z^+\coloneq Z^\ast \setminus \{1\}$, and we set
\[
\Phi_0\coloneq\bigcup_{k\geq 0; r \geq 0}\left(Z^+ P^k(1)\right)^r Z^+
\]
Inductively, we define $\Phi_1 \coloneq \Lambda_{PD}(\Phi_0, Z^+)$ and
 $\Phi_n \coloneq \Lambda_{PD}(\Phi_0, \Phi_{n-1})$ for $n > 1$. Finally, we  set
\[
\Phi\coloneq\bigcup_{n \geq 0} \Phi_n, \quad
\Phi_{PD}\coloneq L^{ij}(\Phi)\cup P^k(1)
\]
for all $i,j,k\geq 0$.
Thus, $\Nf(X^{PD}) = \Phi_{PD}$.
Here, the construction of $\Lambda_{PD}(U, V)$ differs from that of $\Lambda_{P}(U, V)$ and $\Lambda_{D}(U, V)$, as it is designed to exclude $\Omega$-monomials of the form $q|_{D(P(u))}$ from $\Phi_{PD}$.

\begin{remark}
As in \eqref{D:PDAOfDiff} and \eqref{D:PDAOfRB}, 
we construct a PDA $\Ar^{\scriptscriptstyle PD}$ that accepts the monomials in~$\Phi_{PD}$
\begin{eqn}{equation}
\label{D:PDAOfDRB}
\raisebox{-1.7cm}{
\begin{tikzpicture}[shorten >=1pt, node distance=3cm, on grid, auto]
   % States
   \node[state, initial] (q_0) {$q_0$}; 
   \node[state] (q_1) [right=3.1cm of q_0] {$q_1$}; 
   \node[state] (q_2) [right=of q_1] {$q_2$}; 
   \node[state] (q_3) [right=of q_2] {$q_3$};
   \node[state] (q_4) [below=2cm of q_3] {$q_4$};
   \node[state] (q_5) [below=2cm of q_1] {$q_5$};
   \node[state,accepting] (q_6) [left=3.1cm of q_5] {$q_6$};

   % Loops
   \path[->]
   (q_2) edge[loop above] node {$\mathsf{\ell}_{\scriptscriptstyle D}
$} ()
   (q_3) edge[loop above] node {$\mathsf{\ell}_{\scriptscriptstyle P}
$; Z} ()
   (q_4) edge[loop right] node {$\mathsf{r}_{\scriptscriptstyle P}
$} ()
   (q_5) edge[loop right] node[] {$\mathsf{r}_{\scriptscriptstyle P}
$; $\mathsf{r}_{\scriptscriptstyle D}
$} ()
   (q_1) edge[loop above] node {$\mathsf{\ell}_{\scriptscriptstyle P}
$} ();

   % Transitions
   \path[->]
   (q_0) edge[left] node[above] {$\epsilon, \epsilon \fl \$$} (q_1)
   (q_1) edge[bend left=15] node[above] {$\mathsf{\ell}_{\scriptscriptstyle D}
$} (q_2)
   (q_1) edge[bend left=15] node[] {$\epsilon$} (q_5)
   (q_5) edge[bend left=15] node[] {$Z$} (q_1)
   (q_2) edge[bend left=15] node[above] {$\mathsf{\ell}_{\scriptscriptstyle P}
$} (q_3)
   (q_2) edge[bend left=15] node[] {$Z$} (q_1)
   (q_3) edge[bend left=15] node[below] {$\mathsf{\ell}_{\scriptscriptstyle D}
$} (q_2)
   (q_3) edge[left] node[right] {$\mathsf{r}_{\scriptscriptstyle P}
$} (q_4)
   (q_4) edge[bend left=18] node[below] {Z} (q_1)
   (q_5) edge[left] node[below] {$\epsilon, \$ \fl \epsilon $} (q_6)
   (q_1) edge[left] node[] {$\epsilon, \$ \fl \epsilon $} (q_6);
\end{tikzpicture}
}
\end{eqn}
For simplicity, we display only the input symbols in certain instructions while omitting the corresponding stack operations
\[
\epsilon, \epsilon \fl \epsilon;\quad Z, \epsilon \fl \epsilon;\quad \mathsf{\ell}_{\scriptscriptstyle D}
, \epsilon \fl D; \quad\mathsf{\ell}_{\scriptscriptstyle P}
, \epsilon \fl P;\quad \mathsf{r}_{\scriptscriptstyle P}
, P \fl \epsilon;\quad \mathsf{r}_{\scriptscriptstyle D}
, D \fl \epsilon.
\]
From the construction of $X_1^{PD}$, it suffices to modify $\Ar_\Omega$ to exclude monomials containing the subwords $\mathsf{r}_{\scriptscriptstyle P}
\mathsf{\ell}_{\scriptscriptstyle P}
, \mathsf{r}_{\scriptscriptstyle D}
\mathsf{\ell}_{\scriptscriptstyle D}
, \mathsf{r}_{\scriptscriptstyle P}
\mathsf{\ell}_{\scriptscriptstyle D}
, \mathsf{r}_{\scriptscriptstyle D}
\mathsf{\ell}_{\scriptscriptstyle P}
, \mathsf{\ell}_{\scriptscriptstyle D}
\mathsf{r}_{\scriptscriptstyle D}
$, and $\mathsf{\ell}_{\scriptscriptstyle D}
\mathsf{\ell}_{\scriptscriptstyle P}
 u\mathsf{r}_{\scriptscriptstyle P}
\mathsf{r}_{\scriptscriptstyle D}
$, where $u\in\{\mathsf{\ell}_{\scriptscriptstyle P}
,\mathsf{r}_{\scriptscriptstyle P}
,\mathsf{\ell}_{\scriptscriptstyle D}
,\mathsf{r}_{\scriptscriptstyle D}
,Z\}^\ast$.
\end{remark}

\subsubsection{A reduced presentation}
\label{SSS:ReducedDR}
We write $\rep{w}=\Nf(w,X^{PD})$ for every $w\in Z^\Omega$ and construct a reduced presentation $\widetilde{X}^{PD} $, which is Tietze equivalent to $X^{PD}$, with
\[
\begin{aligned}
\widetilde{X}_1^{PD} \coloneq \{  \alpha[u,v]&:P(u)P(v) \rightarrow P(\rep{P(u)v}) + P(\rep{uP(v)}) + \lambda P(\rep{uv}), \\
\beta[u,v]&: D(w_1) D(w_2)\rightarrow \lambda^{-1} \big(\rep{D(w_1w_2)}- \rep{D(w_1) w_2}- \rep{w_1 D(w_2)}\big), \\
\gamma[u]&:D(P(u)) \rightarrow u, \\
 \delta_1[u,w_2]&:P(u)D(w_2) \rightarrow P(\rep{D(u)w_2}) + \rep{uw_2} + \lambda \rep{uD(w_2)},\\
 \delta_2[w_1,v]&:D(w_1)P(v) \rightarrow \rep{D(w_1P(v))} + \rep{w_1v} + \lambda \rep{D(w_1)v},\\
\varphi&:D(1)\rightarrow 0\mid u,v,w_1,w_2 \in \Phi_{PD} \text{ and } w_1,w_2\neq1 \}.
\end{aligned}
\]
It follows that $\Nf(X^{PD}) = \Nf(\widetilde{X}^{PD})=\Phi_{PD}$.

\begin{theorem}
When $\lambda\neq0$, the algebra $\DRBA{\lambda}{Z}$ has the polygraphic resolution  $\mathrm{Sq}(\widetilde{X}^{PD})$, where
\begin{enumerate}
\item $\mathrm{Sq}_0(\widetilde{X}^{PD}) := Z$,
\item $\mathrm{Sq}_1(\widetilde{X}^{PD}) := \{\,\epsilon|D(1),\; \epsilon|D(P(u_0)), \; R(u_1)|R(u_2) \;\big|\; R\in \{P,D\}, \; u_0, R(u_i)\in \Phi_{PD}\,\}$,
\item $\mathrm{Sq}_n(\widetilde{X}^{PD}) := \{\,R(u_1)|R(u_2)|\cdots|R(u_{n+1})\; \big|\; R(u_i)\in\Phi_{PD}\,\}$, for every $n \geq 2$.
\end{enumerate}
\end{theorem}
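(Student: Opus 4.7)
The plan is to invoke Theorem~\ref{T:Main conclusion} with the \LO-$1$-polygraph $\widetilde{X}^{PD}$ defined in~\eqref{SSS:ReducedDR}, and then to read off the generators of $\mathrm{Sq}(\widetilde{X}^{PD})$ by a direct case analysis on the left-hand sides of its rewriting rules. The proof thus splits into two distinct tasks: verifying that $\widetilde{X}^{PD}$ satisfies the hypotheses of Theorem~\ref{T:Main conclusion}, and enumerating the tuples allowed by the generating conditions of~\eqref{SSS:SquierPolygraphicResolution} in this specific polygraph.

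First, I would check that $\widetilde{X}^{PD}$ is a reduced convergent left-monomial presentation of $\DRBA{\lambda}{Z}$. Termination of $X^{PD}$ (hence of $\widetilde{X}^{PD}$) follows from the derivation $d$ introduced in~\eqref{SSS:DRBPresentation} combined with Proposition~\ref{P:DerivationTerminating}. Tietze equivalence of $\widetilde{X}^{PD}$ and $X^{PD}$, together with the equality $\Nf(\widetilde{X}^{PD}) = \Nf(X^{PD}) = \Phi_{PD}$, was noted in~\eqref{SSS:ReducedDR} and is justified by the same argument as in Lemma~\ref{L:TwoTietzeEquivalentPolyForDA} and~\eqref{SSS:LemmaForReducedRB}. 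Confluence of $\widetilde{X}^{PD}$ follows from Theorem~\ref{T:CriticalBranchingLemma} once each critical branching listed in the appendix has been verified to be confluent. Finally, $\widetilde{X}^{PD}$ is reduced by construction, since every right-hand side was explicitly normalized via the notation $\widehat{\,\cdot\,}$ and every $0$-generator is already a normal form. Theorem~\ref{T:Main conclusion} then produces the polygraphic resolution $\mathrm{Sq}(\widetilde{X}^{PD})$ of $\DRBA{\lambda}{Z}$.

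Next, I would identify the generators of $\mathrm{Sq}_n(\widetilde{X}^{PD})$ for each $n \geq 0$ by inspection. The set $\mathrm{Sq}_0(\widetilde{X}^{PD})$ equals $Z$ by definition. For $\mathrm{Sq}_1(\widetilde{X}^{PD})$, case~\ref{I:Condition1OfSq2} of~\eqref{SSS:SquierPolygraphicResolution} with $u_1 = \epsilon$ and $u_2 = \lfloor u_0 \rfloor$ reducible forces $\lfloor u_0 \rfloor$ to be one of the only two reducible $\Omega$-monomials of breadth one in $\widetilde{X}^{PD}$, namely $D(1)$ (coming from $\varphi$) and $D(P(u_0))$ with $u_0 \in \Phi_{PD}$ (coming from $\gamma[u_0]$); the subcase $u_1 \in Z$ of~\ref{I:Condition1OfSq1} contributes nothing since no rule of $\widetilde{X}^{PD}$ has a left-hand side beginning with a $0$-generator; and the subcase $u_1 = \lfloor u_0 \rfloor \in \Nf$, $u_2 \in \Nf$ combined with condition~\ref{I:Condition1OfSq3} forces $u_2$ to have breadth one (otherwise some proper left-factor $u_1 v$ would itself be a rewrite source, contradicting the left-factor condition together with the fact that reductions happen only at the concatenation boundary), and then the rules $\alpha, \beta, \delta_1, \delta_2$ give exactly the pairs $R(u_1)|R'(u_2)$ with $R, R' \in \{P,D\}$ and $R(u_i), R'(u_i) \in \Phi_{PD}$. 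For $n \geq 2$, the same reasoning applied to each consecutive pair $(u_i, u_{i+1})$, using that case~\ref{I:Condition1OfSq2} is excluded for $(u_1, u_2)$, forces $u_i = R_i(v_i)$ with $R_i \in \{P, D\}$ and $R_i(v_i) \in \Phi_{PD}$ throughout, yielding the announced form.

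The main technical obstacle of the proof is the confluence verification for $\widetilde{X}^{PD}$: because of the five concatenation and substitution rules $\alpha, \beta, \gamma, \delta_1, \delta_2$ together with $\varphi$, the number of critical branchings is substantial, although each individual confluence diagram is a straightforward computation deferred to the appendix. A secondary delicate point is to verify that degenerate cases where an argument equals $1$ do not produce extra generators outside the three listed families; this is ruled out by the explicit side conditions $w_1, w_2 \neq 1$ attached to $\beta, \delta_1, \delta_2$ in~\eqref{SSS:ReducedDR}.
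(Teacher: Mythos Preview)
Your proposal is correct and follows essentially the same approach as the paper: the paper does not give an explicit proof for this theorem, but (as in the analogous Theorem~\ref{T:ResolutionRB}) the intended argument is precisely to invoke Theorem~\ref{T:Main conclusion} once $\widetilde{X}^{PD}$ is known to be reduced and convergent, and then read off the generators from the shape of the rule sources. Your enumeration of the $\mathrm{Sq}_n$ generators is more detailed than anything the paper spells out, and the key observation---that every breadth-$2$ source in $\widetilde{X}_1^{PD}$ has the form $R_1(v_1)R_2(v_2)$ with $R_i\in\{P,D\}$, while the breadth-$1$ sources are exactly $D(1)$ and $D(P(u_0))$---is exactly what drives the description.
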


\subsubsection*{Acknowledgements} This work was supported by the China Scholarship Council (CSC) under Grant No. 202406140026. 

\begin{small}
\renewcommand{\refname}{\Large\textsc{References}}
\bibliographystyle{plain}
\bibliography{biblioCURRENT}
\end{small}

\clearpage
\section*{Appendix}
The sources of all critical branchings of the polygraph $X^{PD}$ in \eqref{SSS:DRBPresentation} are listed below. Here, we denote by $\alpha \wedge \beta$ the set of sources for critical branchings of the form $(\alpha[u, v], \beta[u, v])$, with similar conventions for other notations. This enumeration is similar to that presented in {\cite[Thm.3.7]{LQQZ}}, which studies the GS bases theory of free differential Rota–Baxter algebras. For all $u,v,w\in Z^\Omega$, $s,t,r\in Z^\Omega\setminus \{1\}$, and $q \in Z^\Omega[\square]$, we have
\begin{itemize}
	\item [$\alpha \wedge \alpha$] \quad $P(u)P(v)P(w)$, \quad $P\left(\left.q\right|_{P(u)P(v)} \right)P\left(w\right)$, \quad  $P\left(u\right) P\left(\left.q\right|_{P(v)P(w)}\right)$

	\item [$\alpha \wedge \beta$] \quad $P\left(\left.q\right|_{D(s)D(t)} \right)P\left(u\right)$, \quad $P\left(u\right) P\left(\left.q\right|_{D(s)D(t)}\right)$,

	\item [$\alpha \wedge \gamma$] \quad $P\left(\left.q\right|_{D(P(u))} \right)P\left(v\right)$, \quad $P\left(u\right) P\left(\left.q\right|_{D(P(v))}\right)$,

		\item [$\alpha \wedge \delta_1$] \quad $P(u)P(v)D(s)$,\quad $P\left(\left.q\right|_{P(u)D(s)} \right)P\left(v\right)$, \quad $P\left(u\right) P\left(\left.q\right|_{P(v)D(s)}\right)$,

		\item [$\alpha \wedge \delta_2$] \quad $P\left(\left.q\right|_{D(s)P(u)} \right)P\left(v\right)$, \quad $P\left(u\right) P\left(\left.q\right|_{D(s)P(v)}\right)$,

		\item [$\alpha \wedge \varphi$] \quad $P\left(\left.q\right|_{D(1)} \right)P\left(w\right)$, \quad $P\left(u\right) P\left(\left.q\right|_{D(1)}\right)$,

\item [$\beta \wedge \alpha$] \quad $D\left(\left.q\right|_{P(u)(P(v)}\right)D\left(s\right)$,\quad $D\left(s\right)D\left(\left.q\right|_{P(u)P(v)}\right)$,
	
	\item [$\beta \wedge \beta$] \quad$D(s)D(t)D(r)$ , \quad $D\left(\left.q\right|_{D(s)D(t)}\right)D\left(r\right)$,\quad $D\left(s\right)D\left(\left.q\right|_{D(t)D(r)}\right)$,

		\item [$\beta \wedge \gamma$] \quad $D(P(u))D(s)$, \quad $D(s)D(P(u))$,\quad $D\left(\left.q\right|_{D(P(u))}\right)D\left(s\right)$,\quad $D\left(s\right)D\left(\left.q\right|_{D(P(u))}\right)$,

		\item [$\beta \wedge \delta_1$] \quad  $D\left(\left.q\right|_{P(u)D(s)}\right)D\left(t\right)$,\quad $D\left(s\right)D\left(\left.q\right|_{P(u)D(t)}\right)$,

		\item [$\beta \wedge \delta_2$] \quad $D(s)D(t)P(u)$,\quad  $D\left(\left.q\right|_{D(s)P(u)}\right)D\left(t\right)$,\quad $D\left(s\right)D\left(\left.q\right|_{D(t)P(u)}\right)$,

		\item [$\beta \wedge \varphi_2$] \quad  $D\left(\left.q\right|_{D(1)}\right)D\left(s\right)$,\quad $D\left(s\right)D\left(\left.q\right|_{D(1)}\right)$,

		\item [$\gamma \wedge \alpha$] \quad $D\left(P\left(\left.q\right|_{P(u)P(v)}\right)\right)$,
	
		\item [$\gamma \wedge \beta$] \quad $D\left(P\left(\left.q\right|_{D(s)D(t)}\right)\right)$,

		\item [$\gamma \wedge \gamma$] \quad  $D\left(P\left(\left.q\right|_{D(P(u))}\right)\right)$,

		\item [$\gamma \wedge \delta_1$] \quad $D\left(P\left(\left.q\right|_{P(u)D(s)}\right)\right)$,

		\item [$\gamma \wedge \delta_2$] \quad $D\left(P\left(\left.q\right|_{D(s)P(u)}\right)\right)$,

		\item [$\gamma \wedge \varphi$] \quad $D\left(P\left(\left.q\right|_{D(1)}\right)\right)$,

		\item [$\delta_1 \wedge \alpha$] \quad $P\left(\left.q\right|_{P(u)P(v)}\right)D\left(s\right)$,\quad $P\left(u\right)D\left(\left.q\right|_{P(v)P(w)}\right)$,

\item [$\delta_1 \wedge \beta$] \quad $P(u)D(s)D(t)$, \quad $P\left(\left.q\right|_{D(s)D(t)}\right)D\left(u\right)$,\quad $P\left(u\right)D\left(\left.q\right|_{D(s)D(t)}\right)$,

    \item [$\delta_1 \wedge \gamma$] \quad $P(u)D(P(v))$, \quad $P\left(\left.q\right|_{D(P(u))}\right)D\left(s\right)$,\quad $P\left(u\right)D\left(\left.q\right|_{D(P(v))}\right)$,

        \item [$\delta_1 \wedge \delta_1$] \quad $P\left(\left.q\right|_{P(u)D(s)}\right)D\left(t\right)$,\quad $P\left(u\right)D\left(\left.q\right|_{P(v)D(s)}\right)$,

	  \item [$\delta_1 \wedge \delta_2$] \quad $P(u)D(s)P(v)$, \quad $P\left(\left.q\right|_{D(s)P(u)}\right)D\left(t\right)$,\quad $P\left(u\right)D\left(\left.q\right|_{D(s)P(v)}\right)$,

	\item [$\delta_1 \wedge \varphi$] \quad $P\left(\left.q\right|_{D(1)}\right)D\left(s\right)$,\quad $P\left(u\right)D\left(\left.q\right|_{D(1)}\right)$,

		\item [$\delta_2 \wedge \alpha$] \quad $D(s)P(u)P(v)$, \quad $D\left(\left.q\right|_{P(u)P(v)}\right)P\left(w\right)$,\quad $D\left(s\right)P\left(\left.q\right|_{P(u)P(v)}\right)$,

		\item [$\delta_2 \wedge \beta$] \quad $D\left(\left.q\right|_{D(s)D(t)}\right)P\left(u\right)$,\quad $D\left(s\right)P\left(\left.q\right|_{D(t)D(r)}\right)$,

		\item [$\delta_2 \wedge \gamma$] \quad $D(P(u))P(v)$,\quad $D\left(\left.q\right|_{D(P(u))}\right)P\left(v\right)$,\quad $D\left(s\right)P\left(\left.q\right|_{D(P(u))}\right)$,

\item [$\delta_2 \wedge \delta_1$] \quad $D(s)P(u)D(t)$,\quad $D\left(\left.q\right|_{P(u)D(s)}\right)P\left(v\right)$,\quad $D\left(s\right)P\left(\left.q\right|_{P(u)D(t)}\right)$,

\item [$\delta_2 \wedge \delta_2$] \quad $D\left(\left.q\right|_{D(s)P(u)}\right)P\left(v\right)$,\quad $D\left(s\right)P\left(\left.q\right|_{D(t)P(u)}\right)$,

\item [$\delta_2 \wedge \varphi$] \quad $D\left(\left.q\right|_{D(1)}\right)P\left(u\right)$,\quad $D\left(s\right)P\left(\left.q\right|_{D(1)}\right)$.
 \end{itemize}

\clearpage

\quad

\vfill

\begin{footnotesize}

\bigskip
\auteur{Zuan Liu}{zliu@math.univ-lyon1.fr}
{Université Claude Bernard Lyon 1\\
CNRS, Institut Camille Jordan, UMR5208\\
43 blvd. du 11 novembre 1918\\
F-69622 Villeurbanne cedex, France}

\bigskip
\auteur{Philippe Malbos}{malbos@math.univ-lyon1.fr}
{Université Claude Bernard Lyon 1\\
CNRS, Institut Camille Jordan, UMR5208\\
43 blvd. du 11 novembre 1918\\
F-69622 Villeurbanne cedex, France}
\end{footnotesize}

\vspace{1.5cm}

\begin{small}---\;\;\today\;\;-\;\;\hhmm\;\;---\end{small}
\end{document}